\documentclass[11pt]{article}
\usepackage{amsthm, amsmath, amssymb, amsfonts, url, booktabs, tikz, setspace, fancyhdr}
\usepackage[margin = 1in]{geometry}

\usepackage{hyperref}


\newtheorem{theorem}{Theorem}[section]
\newtheorem{proposition}[theorem]{Proposition}
\newtheorem{lemma}[theorem]{Lemma}

\newtheorem{conjecture}[theorem]{Conjecture}
\newtheorem{claim}[theorem]{Claim}

\theoremstyle{definition}

\theoremstyle{remark}





\newcommand{\bG}{\hat{G}}
\newcommand{\tG}{\tilde{G}}
\newcommand{\G}{\mathcal{G}}
\newcommand{\Gh}{\hat{\mathcal{A}}}

\newcommand{\EE}{\mathbb{E}}

\newcommand{\bm}{\mathbf{m}}

\newcommand{\cP}{\mathcal{P}}

\newcommand{\A}{\mathcal{A}}
\newcommand{\B}{\mathcal{B}}
\newcommand{\C}{\mathcal{C}}

\newcommand{\e}{\epsilon}
\renewcommand{\S}{\mathcal{S}}

\newcommand{\HSX}{\beta}
\newcommand{\HSe}{\gamma}
\newcommand{\HSep}{\gamma'}

\def\Pr{\mathop{\rm Pr}\nolimits}


\tikzstyle{p}+=[fill=black, circle, minimum width = 1pt, inner sep =
1pt]
\tikzstyle{w}+=[fill=white, draw, circle, minimum width = 1pt, inner sep =
1.5pt]

\begin{document}

\title{On the K\L R conjecture in random graphs}

\author{D. Conlon\thanks{Mathematical Institute, Oxford OX1 3LB, United
Kingdom.
E-mail: {\tt
david.conlon@maths.ox.ac.uk}. Research supported by a Royal Society University Research Fellowship.} \and W. T. Gowers\thanks{Department of Pure Mathematics and Mathematical Statistics, Wilberforce Road, Cambridge CB3 0WB, UK. E-mail: {\tt w.t.gowers@dpmms.cam.ac.uk}. Research supported by a Royal Society 2010 Anniversary Research Professorship.} \and W. Samotij\thanks{School of Mathematical Sciences, Tel Aviv University, Tel Aviv, Israel; and Trinity College, Cambridge CB2 1TQ, UK. E-mail: {\tt samotij@post.tau.ac.il}. Research supported in part by a Trinity College JRF.} \and M. Schacht\thanks{Fachbereich Mathematik, Universit\"at Hamburg, Bundesstra\ss e 55, D-20146 Hamburg, Germany. E-mail: {\tt schacht@math.uni-hamburg.de}. Research supported by the Heisenberg programme of the DFG.}}

\date{}

\maketitle

\begin{abstract}
The K\L R conjecture of Kohayakawa, \L uczak, and R\"odl  is a statement that allows one to prove that asymptotically almost surely all subgraphs of the random graph $G_{n,p}$, for sufficiently large $p : = p(n)$, satisfy an embedding lemma which complements the sparse regularity lemma of Kohayakawa and R\"odl. We prove a variant of this conjecture which is sufficient for most known applications to random graphs. In particular, our result implies a number of recent probabilistic versions, due to Conlon, Gowers, and Schacht, of classical extremal combinatorial theorems. We also discuss several further applications.
\end{abstract}

\section{Introduction}

Szemer\'edi's regularity lemma~\cite{Sz78}, which played a crucial role in Szemer\'edi's proof of the Erd\H{o}s-Tur\'an conjecture~\cite{Sz75} on long arithmetic progressions in dense subsets of the integers, is one of the most important tools in extremal graph theory (see~\cite{KSSS02, KO09, RS10}). Roughly speaking, it says that the vertex set of every graph $G$ may be divided into a bounded number of parts in such a way that most of the induced bipartite graphs between different parts are pseudorandom. 

More precisely, a bipartite graph between sets $U$ and $V$ is said to be $\e$-\emph{regular} if, for every $U' \subseteq U$ and $V' \subseteq V$ with $|U'| \geq \e |U|$ and $|V'| \geq \e |V|$, the density $d(U', V')$ of edges between $U'$ and $V'$ satisfies
\[|d(U', V') - d(U, V)| \leq \e.\]
We will say that a partition of the vertex set of a graph into $t$ pieces $V_1, \dots, V_t$ is an equipartition if, for every $1 \leq i, j \leq t$, we have the condition that $||V_i| - |V_j|| \leq 1$. We say that the partition is $\e$-regular if it is an equipartition and, for all but at most $\e t^2$ pairs $(V_i, V_j)$, the induced graph between $V_i$ and $V_j$ is $\e$-regular. Szemer\'edi's regularity lemma can be formally stated as follows.

\begin{theorem} \label{thm:reglemma}
For every $\e > 0$ and every positive integer $t_0$, there exists a positive integer $T$ such that every graph $G$ with at least $t_0$ vertices admits an $\e$-regular partition $V_1, \dots, V_t$ of its vertex set into $t_0 \leq t \leq T$ pieces. 
\end{theorem}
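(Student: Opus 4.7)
The plan is to follow the standard energy (index) increment argument. For a partition $\mathcal{P} = \{V_1, \dots, V_t\}$ of $V(G)$, define the index
\[
q(\mathcal{P}) = \sum_{i,j=1}^{t} \frac{|V_i||V_j|}{n^2}\, d(V_i, V_j)^2,
\]
which lies in $[0,1]$ since densities lie in $[0,1]$ and the weights sum to $1$. I would first record the monotonicity fact: if $\mathcal{P}'$ refines $\mathcal{P}$, then $q(\mathcal{P}') \geq q(\mathcal{P})$. This is a direct consequence of Jensen's inequality (equivalently, the Cauchy--Schwarz inequality) applied to the densities inside each cell of $\mathcal{P}$, viewed as an average over the refining cells.

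The core step is the \emph{defect} form: if a bipartite pair $(V_i, V_j)$ is not $\varepsilon$-regular, witnessed by sets $U'\subseteq V_i$ and $V'\subseteq V_j$ with $|U'|\geq \varepsilon |V_i|$, $|V'|\geq\varepsilon|V_j|$, and $|d(U',V') - d(V_i,V_j)|>\varepsilon$, then refining $V_i$ by $U'$ and $V_j$ by $V'$ strictly increases the contribution of the pair to $q$ by at least $\varepsilon^4 |V_i||V_j|/n^2$. Summing over the at least $\varepsilon t^2$ irregular pairs in a non-$\varepsilon$-regular equipartition, one obtains an index gain of at least $\varepsilon^5$. Combining the individual refinements into a single common refinement (each $V_i$ is cut into at most $2^t$ pieces by all the chosen witnessing sets $U'$ involving it) gives a refinement $\mathcal{P}^*$ with $q(\mathcal{P}^*) \geq q(\mathcal{P}) + \varepsilon^5$.

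I would then iterate, starting from an arbitrary equipartition $\mathcal{P}_0$ into $t_0$ parts. At stage $k$, either $\mathcal{P}_k$ is already $\varepsilon$-regular, and we stop, or we produce $\mathcal{P}_{k+1}$ as above. Since $q \le 1$, the process terminates after at most $\lceil \varepsilon^{-5}\rceil$ stages. The number of parts grows at most by a factor $2^{t}$ per stage, so the final number of parts is bounded by a tower function of $\varepsilon^{-5}$, yielding the bound $T = T(\varepsilon, t_0)$.

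The main nuisance, rather than the main difficulty, is enforcing the \emph{equipartition} requirement: the common refinement above will in general have cells of very unequal sizes. The standard fix is to re-equipartition after each refinement by chopping every cell into pieces of a fixed small size, absorbing tiny leftover vertices into one exceptional cell of total size at most $\varepsilon n$, and then redistributing. One checks that this operation changes $q$ by at most $o(1)$ (in particular by much less than $\varepsilon^5$), so the net gain per iteration is still at least, say, $\varepsilon^5/2$, and the termination argument is unaffected. This bookkeeping step, and the corresponding care needed in choosing $t_0$ large compared to $\varepsilon^{-1}$, is where essentially all of the work lies.
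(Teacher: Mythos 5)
The paper does not prove Theorem~\ref{thm:reglemma}; it states it as background with a citation to Szemer\'edi's original paper, so there is no proof in the paper to compare against. Your sketch is the standard energy-increment argument---define the mean-square density index, prove monotonicity under refinement via Cauchy--Schwarz, extract an index gain of order $\e^5$ from the irregular pairs of a non-$\e$-regular equipartition via the defect form, pass to a common refinement, iterate at most $\e^{-5}$ times, and re-equipartition with controlled loss---and it is correct in all its essential steps, including the acknowledgement that the equipartition bookkeeping (and choosing $t_0$ large relative to $\e^{-1}$) is where the routine but fiddly work lies.
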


Often the strength of the regularity lemma lies in the fact that it may be combined with a counting or embedding lemma that tells us approximately how many copies of a particular subgraph a graph contains, in terms of the densities $d(V_i,V_j)$ arising in an $\e$-regular partition. The so-called regularity method usually works as follows. First, one applies the regularity lemma to a graph~$G$. Next, one defines an auxiliary graph $R$ whose vertices are the parts of the regular partition of $G$ one obtains, and whose edges correspond to regular pairs with non-negligible density. (For some applications we may instead take a weighted graph, where the weight of the edge between a regular pair $V_i$ and $V_j$ is the density $d(V_i,V_j)$.) If one can then find a copy of a particular subgraph $H$ in $R$, the counting lemma allows one to find many copies of $H$ in $G$. If $R$ does not contain a copy of $H$, this information can often be used to deduce some further structural properties of the graph $R$ and, thereby, the original graph $G$. Applied in this manner, the regularity and counting lemmas allow one to prove a number of well-known theorems in extremal graph theory, including the Erd\H{o}s-Stone theorem~\cite{ES46}, its stability version due to Erd{\H o}s and Simonovits~\cite{Si68}, and the graph removal lemma~\cite{ADLRY94, EFR86, Fu95, RS78}.

For sparse graphs -- that is, graphs with $n$ vertices and $o(n^2)$ edges -- the regularity lemma stated in Theorem~\ref{thm:reglemma} is vacuous, since every equipartition into a bounded number of parts is $\e$-regular for $n$ sufficiently large. It was observed independently by Kohayakawa \cite{K97} and R\"odl that the regularity lemma can nevertheless be generalized to an appropriate class of graphs with density tending to zero. Their result applies to a natural class of sparse graphs that is wide enough for the lemma to have several interesting applications. In particular, it applies to relatively dense subgraphs of random graphs -- that is, one takes a random graph $G_{n,p}$ of density $p$ and a subgraph $G$ of $G_{n,p}$ of density at least $\delta p$ (or relative density at least $\delta$ in $G_{n,p}$), where $p$ usually tends to 0, while $\delta>0$ is usually independent of the number of vertices. 

To make this precise, we say that a bipartite graph between sets $U$ and $V$ is $(\e, p)$-\emph{regular} if, for every $U' \subseteq U$ and $V' \subseteq V$ with $|U'| \geq \e |U|$ and $|V'| \geq \e |V|$, the density $d(U', V')$ of edges between $U'$ and $V'$ satisifies
\[|d(U', V') - d(U, V)| \leq \e p.\]
That is, we alter the definition of regularity so that it is relative to a particular density $p$. This density is usually comparable to the total density between $U$ and $V$. A partition of the vertex set of a graph into $t$ pieces $V_1, \dots, V_t$ is then said to be $(\e, p)$-regular if it is an equipartition and, for all but at most $\e t^2$ pairs $(V_i, V_j)$, the induced graph between $V_i$ and $V_j$ is $(\e, p)$-regular. 

The class of graphs to which the Kohayakawa-R\"odl regularity lemma applies are the so-called upper-uniform graphs~\cite{KoRo03}. Suppose that $0 < \eta \leq 1$, $D > 1$, and $0 < p \leq 1$ are given. We will say that a graph $G$ is $(\eta, p, D)$-\emph{upper-uniform} if for all disjoint subsets $U_1$ and $U_2$ with $|U_1|, |U_2| \geq \eta |V(G)|$, the density of edges between $U_1$ and $U_2$ satisfies $d(U_1, U_2) \leq D p$. This condition is satisfied for many natural classes of graphs, including all subgraphs of random and pseudorandom graphs of density $p$. The regularity lemma of Kohayakawa and R\"odl is the following.

\begin{theorem} \label{thm:sparsereg}
For every $\e, D > 0$ and every positive integer $t_0$, there exist $\eta > 0$ and a positive integer $T$ such that for every $p\in[0,1]$, every graph $G$ with at least $t_0$ vertices that is $(\eta, p, D)$-upper-uniform admits an $(\e, p)$-regular partition $V_1, \dots, V_t$ of its vertex set into $t_0 \leq t \leq T$ pieces. 
\end{theorem}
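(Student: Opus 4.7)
The plan is to follow the standard index-increment strategy from Szemer\'edi's original proof of Theorem~\ref{thm:reglemma}, with one new ingredient: the upper-uniformity hypothesis, which keeps the ``mean-square density'' functional bounded in the sparse setting, where individual densities $d(U,V)$ may be as large as~$1$ while $p$ is tiny.

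For any partition $\mathcal{P}=\{V_1,\dots,V_s\}$ of $V(G)$, I would consider the index
\[
q(\mathcal{P}) \;=\; \sum_{\substack{i,j=1 \\ i \neq j}}^{s} \frac{|V_i|\,|V_j|}{n^2} \left(\frac{d(V_i,V_j)}{p}\right)^{\!2}.
\]
Two properties are standard. First, for any refinement $\mathcal{P}'$ of $\mathcal{P}$, Cauchy--Schwarz gives $q(\mathcal{P}') \geq q(\mathcal{P})$. Second, if $\mathcal{P}$ is an equipartition into $t$ parts that is not $(\e,p)$-regular, then for each irregular pair $(V_i,V_j)$, splitting $V_i$ and $V_j$ along the witnessing subsets $U'\subseteq V_i$, $W'\subseteq V_j$ of sizes $\geq\e|V_i|,\geq\e|V_j|$ with $|d(U',W') - d(V_i,V_j)| \geq \e p$ increases the contribution of $(V_i,V_j)$ to $q$ by at least a constant times $\e^4/t^2$. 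Summing over the at least $\e t^2$ irregular pairs and re-equipartitioning the resulting refinement yields a new equipartition $\mathcal{P}'$ with $q(\mathcal{P}')\geq q(\mathcal{P})+c\e^5$ for an absolute constant $c>0$.

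The new input, and the reason upper-uniformity is needed, is the upper bound on $q$. Assuming $\eta$ is chosen so that $\eta \leq \e/T$, where $T$ is the final bound on the number of parts, every subset $U'\subseteq V_i$ arising in the procedure with $|U'|\geq \e|V_i|$ has size at least $\eta n$. The $(\eta,p,D)$-upper-uniformity hypothesis therefore applies to every relevant disjoint pair, giving $d(U',W')/p \leq D$ and consequently $q(\mathcal{P})\leq D^2$ throughout the iteration.

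Combining the three properties in the usual way, one starts from an arbitrary equipartition into $t_0$ parts and iterates the refinement step. The procedure must terminate with an $(\e,p)$-regular equipartition after at most $D^2/(c\e^5)$ steps, and since each step can multiply the number of parts by at most $2^{t}$, the final bound $T$ is a tower function in $\e,D,t_0$. The main apparent obstacle is a circularity: the upper-uniformity constant $\eta$ is chosen in terms of $T$, but $T$ is determined by the iteration, which relies on upper-uniformity. This is resolved in the standard way by first computing $T$ a priori from the tower recursion (depending only on $\e,D,t_0$) and only then defining $\eta$ as a function of $T$ and $\e$; a minor technical nuisance is the re-equipartitioning after each refinement, handled by the usual trick of sub-dividing the atoms of the refinement into equal-sized pieces at the cost of a negligible perturbation of $q$.
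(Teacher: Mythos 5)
Your proposal is correct and follows precisely the approach the paper indicates (the paper gives no proof, only the one-sentence remark that it is ``essentially the same as the proof of the dense regularity lemma, with the upper uniformity used to ensure that the iteration terminates after a constant number of steps,'' citing~\cite{K97, KoRo03}). Your normalized index $q(\mathcal{P}) = \sum_{i\neq j} \frac{|V_i||V_j|}{n^2}\bigl(d(V_i,V_j)/p\bigr)^2$, the $c\e^5$ increment from irregular pairs, the upper bound $q\leq D^2$ supplied by upper-uniformity, and the resolution of the apparent circularity by computing $T$ from the tower recursion \emph{before} fixing $\eta$ are all exactly the intended argument. One small remark: for the termination bound you only need $q(\mathcal{P}_i)\leq D^2$ for the equitable partitions produced at the end of each round, which requires $d(V_i,V_j)\leq Dp$ for parts of size $\gtrsim n/T$, so $\eta \leq 1/(2T)$ already suffices; applying upper-uniformity to the witness sets $U',W'$ themselves (and hence the extra factor of $\e$ in your choice $\eta\leq\e/T$) is not needed, since the index-increment step uses only the lower bound $|d(U',W')-d(V_i,V_j)|\geq\e p$ coming from irregularity, not an upper bound on $d(U',W')$. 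This does not affect correctness, since your more restrictive choice of $\eta$ is still valid.
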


The proof of this theorem is essentially the same as the proof of the dense regularity lemma, with the upper uniformity used to ensure that the iteration terminates after a constant number of steps. We note that different versions of the result have appeared in the literature where the $(\eta, p, D)$-upper-uniformity assumption is altered \cite{ACHKRS10, L00} or dropped completely \cite{Sc11}.

As we have already mentioned above, the usefulness of the dense regularity method relies on the existence of a corresponding counting lemma. Roughly speaking, a counting lemma says that if we start with an arbitrary graph $H$ and replace its vertices by large independent sets and its edges by $\e$-regular bipartite graphs with non-negligible density, then this blown-up graph will contain roughly the expected number of copies of $H$. Here is a precise statement to that effect.

\begin{lemma}
  \label{lemma:H-count}
  For every graph $H$ with vertex set $\{1,2,\dots,k\}$ and every $\delta>0$, there exists $\e>0$ and an integer $n_0$ such that the following statement holds. Let $n\geq n_0$ and let $G$ be a graph whose vertex set is a disjoint union $V_1 \cup \ldots \cup V_k$ of sets of size $n$. Assume that for each $ij \in E(H)$, the bipartite subgraph of $G$ induced between $V_i$ and $V_j$ is $\e$-regular and has density $d_{ij}$. Then the number of $k$-tuples $(v_1,\dots,v_k)\in V_1\times\dots\times V_k$ such that $v_iv_j\in E(G)$ whenever $ij\in E(H)$ is $n^k(\prod_{ij\in E(H)}d_{ij}\pm\delta)$.
\end{lemma}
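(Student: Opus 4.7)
The plan is a standard vertex-by-vertex embedding argument by induction on $k$, using the usual slicing lemma for $\epsilon$-regular pairs as the only external ingredient. Specifically, I will use: (i) if $(X,Y)$ is $\epsilon$-regular with density $d$ and $X'\subseteq X$, $Y'\subseteq Y$ satisfy $|X'|\ge \mu|X|$ and $|Y'|\ge \mu|Y|$, then $(X',Y')$ is $\max(2\epsilon,\epsilon/\mu)$-regular with density $d\pm\epsilon$; and (ii) for any $\epsilon$-regular pair $(X,Y)$ of density $d$, all but at most $2\epsilon|X|$ vertices $x\in X$ have $|N(x)\cap Y|=(d\pm\epsilon)|Y|$. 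The induction requires allowing parts $V_i$ of unequal sizes (each at least $n_0$), since the inductive step naturally produces unequal subsets even when the original parts are balanced; the base case $k=1$ is trivial.

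First I dispose of the degenerate case where some density $d_{ij}$ with $ij\in E(H)$ is tiny. If $d_{ij}<\alpha:=\delta/(3|E(H)|)$, then the count is at most $(d_{ij}+\epsilon)n^k\le 2\alpha n^k$ (by constraining only the single edge $ij$), while $\prod_{ij\in E(H)} d_{ij}\cdot n^k\le \alpha n^k$, so the two agree up to $\delta n^k$ whenever $\epsilon\le\alpha$. Hence I may assume $d_{ij}\ge\alpha$ for every $ij\in E(H)$.

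For the inductive step I fix $v_k\in V_k$ and set $U_i(v_k)=N_G(v_k)\cap V_i$ if $ik\in E(H)$ and $U_i(v_k)=V_i$ otherwise, so that the $H$-homomorphisms extending $v_k$ are precisely the $(H-k)$-homomorphisms in $U_1(v_k),\ldots,U_{k-1}(v_k)$. The degree form of regularity says that all but at most $2k\epsilon|V_k|$ vertices $v_k\in V_k$ are \emph{typical}, meaning $|U_i(v_k)|=(d_{ki}\pm\epsilon)|V_i|$ for every neighbour $i$ of $k$ in $H$. For typical $v_k$ we have $|U_i(v_k)|\ge(\alpha-\epsilon)|V_i|$, so the slicing lemma upgrades every pair $(U_i(v_k),U_j(v_k))$ with $ij\in E(H-k)$ to an $\epsilon/(\alpha-\epsilon)$-regular pair of density $d_{ij}\pm\epsilon$. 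Applying the inductive hypothesis (with a suitably smaller tolerance $\delta'$), the number of extensions equals $\prod_{ij\in E(H-k)}(d_{ij}\pm\epsilon)\cdot\prod_{i<k}|U_i(v_k)|$ up to an additive error of $\delta'\prod_{i<k}|U_i(v_k)|$. Summing over typical $v_k$ and using $|U_i(v_k)|=(d_{ki}\pm\epsilon)|V_i|$ for neighbours $i$ of $k$ and $|U_i(v_k)|=|V_i|$ otherwise recovers $\prod_{ij\in E(H)}d_{ij}\cdot n^k$ to within $O_H(\delta'+\epsilon)n^k$; atypical $v_k$ contribute at most $2k\epsilon n^k$.

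The main obstacle is purely bookkeeping: each level of the recursion worsens the regularity parameter by a factor of roughly $1/\alpha$ via slicing, so the initial $\epsilon$ must be chosen as small as $\alpha^{\Omega(k)}\delta$. Beyond threading the parameter chain $\epsilon\gg\epsilon'\gg\cdots$ consistently down the induction, the argument is conceptually routine.
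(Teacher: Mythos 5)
Your proof is correct. The paper does not actually prove Lemma~\ref{lemma:H-count}; it is stated as a standard known fact (the classical dense counting/embedding lemma) and used as a black box. Your argument is the canonical proof: handle the case of some small $d_{ij}$ separately, then induct on $k$ by fixing $v_k$, restricting to neighbourhoods, invoking the degree form of regularity to control how many $v_k$ are typical, and the slicing lemma to preserve regularity on the restricted parts. You correctly identify the two real points of care: the induction must be stated for unequal part sizes (so the inductive conclusion reads $(\prod d_{ij}\pm\delta)\prod_i|V_i|$ rather than $n^k(\cdots)$), and the regularity parameter degrades by roughly a factor $1/\alpha$ at each level, forcing $\epsilon$ to be taken of size $\alpha^{\Omega(k)}\delta$ and $n_0$ to grow down the recursion. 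One tiny cosmetic point: in the small-density case the crude bound on the count is simply $d_{ij}n^k$ (no regularity needed), not $(d_{ij}+\epsilon)n^k$; both suffice for the conclusion, so this does not affect correctness. With the parameter chain threaded as you describe, the argument closes.
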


\noindent In particular, if the density $d_{ij}$ is large for every $ij\in E(H)$, then $G$ contains many copies of $H$. 

Let us define a \textit{canonical copy} of $H$ in $G$ to be a $k$-tuple as in the lemma above: that is, a $k$-tuple $(v_1,\dots,v_k)$ such that $v_i\in V_i$ for every $i\in V(H)$ and $v_iv_j\in E(G)$ for every $ij\in E(H)$. Let us also write $G(H)$ for the number of canonical copies of $H$ in $G$. (Of course, the definitions of ``canonical copy" and $G(H)$ depend not just on $G$ but also on the partition of $G$ into $V_1,\dots,V_k$, but we shall suppress this dependence in the notation.)

In order to use Theorem \ref{thm:sparsereg}, one would ideally like a statement similar to Lemma \ref{lemma:H-count} but adapted to a sparse context. For this we would have an additional parameter $p$, which can tend to zero with $n$. We would replace the densities $d_{ij}$ by $d_{ij}p$ and we would like to show that $G(H)$ is approximately $n^kp^{e(H)}(\prod_{ij\in E(H)}d_{ij}\pm\delta)$. In order to obtain this stronger conclusion (stronger because the error estimate has been multiplied by $p^{e(H)}$), we need a stronger assumption, and the natural assumption, given the statement of Theorem \ref{thm:sparsereg} (which is itself natural), is to replace $\e$-regularity by $(\e,p)$-regularity.

Of course, we cannot expect such a result if $p$ is too small. Consider the random graph $G$ obtained from $H$ by replacing each vertex of $H$ by an independent set of size~$n$ and each edge of $H$ by a random bipartite graph with $pn^2$ edges. With high probability, $G(H)$ will be about $p^{e(H)}n^{v(H)}$. Hence, if $p^{e(H)}n^{v(H)} \ll pn^2$, then one can remove all copies of $H$ from $G$ by deleting a tiny proportion of all edges. (We may additionally delete a further small proportion of edges to ensure that all bipartite graphs corresponding to the edges of $H$ have the same number of edges.) It is not hard to see that with high probability the bipartite graphs that make up the resulting graph $G'$ will be $(\e,q)$-regular for some $q=(1-o(1))p$, but that $G'$ will contain no canonical copies of $H$.

Therefore, a sparse analogue of Lemma~\ref{lemma:H-count} cannot hold if $p \le c n^{-\frac{v(H)-2}{e(H)-1}}$ for some small positive constant $c$. Note that one can replace $H$ in the above argument by an arbitrary subgraph $H' \subseteq H$, since removing all copies of $H'$ from a graph also results in a $H$-free subgraph. This observation naturally leads to the notion of $2$-\emph{density} $m_2(H)$ of a graph $H$, defined by
\[
m_2(H) = \max \left\{ \frac{e(H')-1}{v(H')-2} \colon H' \subseteq H \text{ with } v(H') \geq 3 \right\}.
\]
(We take $m_2(K_2) = \frac{1}{2}$.) With this notation, what we have just seen is that to have any chance of an appropriate analogue of Lemma~\ref{lemma:H-count} holding, we need to assume that $p \geq C n^{-1/m_2(H)}$ for some absolute constant $C > 0$. 

Unfortunately, there is a more fundamental difficulty with finding a sparse counting lemma to match a sparse regularity lemma. Instead of sparse random graphs with many vertices, one can consider blow-ups of sparse random graphs with far fewer vertices. That is, one can pick a counterexample of the kind just described but with the sets $V_i$ of size $r$ for some $r$ that is much smaller than $n$, and then one can replace each vertex of this small graph by an independent set of $n/r$ vertices to make a graph with $n$ vertices in each $V_i$. Roughly speaking, the counterexample above survives the blowing-up process, and the result is that the hoped-for sparse counting lemma is false whenever $p=o(1)$. (For more details, see \cite{GS05, KR03}.)

However, these ``block" counterexamples have a special structure, so, for $p \geq C n^{-1/m_2(H)}$, it looks plausible that graphs for which the sparse counting lemma fails should be very rare. This intuition was formalized by Kohayakawa, \L uczak, and R\"odl \cite{KLR97}, who made a conjecture that is usually known as the K\L R conjecture. Before we state it formally, let us introduce some notation.

As above, let $H$ be a graph with vertex set $\{1,2,\dots,k\}$. We denote by $\G(H, n, m, p, \e)$ the collection of all graphs $G$ obtained in the following way. The vertex set of $G$ is a disjoint union $V_1 \cup \ldots \cup V_k$ of sets of size $n$.  For each edge $ij \in E(H)$, we add to $G$ an $(\e,p)$-regular bipartite graph with $m$ edges between the pair $(V_i, V_j)$. These are the only edges of $G$. Let us also write $\G^*(H,n,m,p,\e)$ for the set of all $G\in\G(H, n, m, p, \e)$ that do not contain a canonical copy of $H$.

Since the sparse regularity lemma yields graphs with varying densities between the various pairs of vertex sets, it may seem surprising that we are restricting attention to graphs where all the densities are equal (to $m/n^2$). However, as we shall see later, it is sufficient to consider just this case. In fact, the K\L R conjecture is more specific still, since it takes all the densities to be equal to $p$. Again, it turns out that from this case one can deduce the other cases that are needed.

\begin{conjecture} \label{conj:KLR}
Let $H$ be a fixed graph and let $\beta>0$. Then there exist $C, \e > 0$ and a positive integer $n_0$ such that 
\[|\G^*(H,n,m,m/n^2,\e)| \leq \beta^m \binom{n^2}{m}^{e(H)}\]
for every $n\geq n_0$ and every $m \geq C n^{2- 1/m_2(H)}$.
\end{conjecture}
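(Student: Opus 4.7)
I would recast the conjecture probabilistically: writing $p=m/n^2$, proving $|\G^*(H,n,m,p,\e)|\le \beta^m\binom{n^2}{m}^{e(H)}$ is essentially equivalent to showing that a uniformly random $G\in\G(H,n,m,p,\e)$ satisfies $\Pr[G(H)=0]\le \beta^m$. I would deduce this from a stronger \emph{sparse counting lemma}: except on a set of measure at most $\beta^m$, the random $G$ contains at least $c\cdot p^{e(H)}n^{v(H)}$ canonical copies of $H$, for some $c=c(H,\e)>0$.

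The counting lemma itself I would prove by induction on $e(H)$, removing an edge $e=uv$ chosen so that every subgraph of $H$ achieving the maximum in the definition of $m_2(H)$ remains intact in $H-e$; the inductive hypothesis then applies at the same value of $p$. Conditioning on the bipartite layers $G_{ij}$ for $ij\neq e$, the induction gives that with conditional failure probability at most $\beta^{m/2}$ these layers contain at least $c'\cdot p^{e(H)-1}n^{v(H)}$ canonical copies of $H-e$. Each such copy projects to a pair $(x,y)\in V_u\times V_v$ that, if joined by an edge of $G_e$, produces a canonical copy of $H$. Writing $P$ for the resulting multiset of pairs, I have $|P|\gg p^{e(H)-1}n^2$, and it suffices to show that a uniformly chosen $(\e,p)$-regular bipartite graph $G_e$ with $m$ edges meets $P$ in $\Omega(p|P|)$ edges except on a set of measure $\beta^{m/2}$.

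The hard part will be the last step: $(\e,p)$-regularity of $G_e$ only controls averages, not the exponential concentration of $|E(G_e)\cap P|$ that the $\beta^m$ tail demands. To bridge this gap, I would compare the uniform measure on $(\e,p)$-regular bipartite graphs with exactly $m$ edges to the binomial random bipartite graph of density $p$, where Chernoff/Janson-type bounds are standard, and argue that the conditioning on $(\e,p)$-regularity and on the precise edge count costs at most a factor of $e^{O(m)}$ in probability---negligible compared to the tail we can afford. A technical subtlety is that the inductive hypothesis must survive the conditioning at each step, which suggests strengthening it to a simultaneous lower bound on the number of canonical copies of every $H'\subseteq H$. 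The role of the threshold $p\ge Cn^{-1/m_2(H)}$ is then transparent: it is precisely what forces $|P|$ to be large enough for concentration to yield the required tail, and below this threshold the variance of the number of canonical copies of the densest subgraph would swamp the expectation, making a bound of this strength impossible. Finally, once the counting lemma is in hand, the estimate $|\G^*(H,n,m,p,\e)|\le \beta^m|\G(H,n,m,p,\e)|\le \beta^m\binom{n^2}{m}^{e(H)}$ is immediate.
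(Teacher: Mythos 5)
The statement you were asked to prove is a \emph{conjecture} in this paper, not a theorem: the authors explicitly do not prove Conjecture~\ref{conj:KLR}, but instead establish a variant (Theorem~\ref{thm:main}) that holds a.a.s.\ inside $G_{N,p}$ rather than as a uniform count over $\G^*$. (The full conjecture was proved by Saxton--Thomason and, for balanced $H$, by Balogh--Morris--Samotij, using the hypergraph-container method.) So there is no ``paper proof'' to compare against; I can only assess your sketch on its own terms.

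The central gap is exactly the step you flag as ``the hard part,'' and your proposed bridge does not close it. After conditioning on the other layers, the quantity you need to concentrate is $X=\sum_{(x,y)}\mu(x,y)\mathbf 1[xy\in G_e]$, where $\mu(x,y)$ is the number of canonical copies of $H-e$ through the pair $(x,y)$. Even in the binomial model, a weighted Chernoff/Janson lower-tail bound for $X$ is governed by $\sum_{x,y}\mu(x,y)^2$, and this second moment is \emph{not} controlled by the inductive hypothesis: it can be dominated by a handful of pairs with $\mu(x,y)$ close to $n^{v(H)-2}$, which is precisely what the ``block'' counterexamples described in the introduction manufacture. For such configurations the lower-tail probability is nowhere near $\beta^m$, and switching between the uniform-$m$-edge model and the binomial model (the ``$e^{O(m)}$ conditioning cost'') does nothing to repair this, since the failure occurs already in the binomial model. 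Any proof along these lines must explicitly control $\sum_{x,y}\mu(x,y)^2$ --- for instance by first deleting a sparse set of ``bad'' edges as in Lemma~\ref{lemma:part-i-upper-tail} of Section~\ref{sec:proof-part-a}, or by replacing the single-exposure concentration argument with the multiple-exposure trick of Schacht and Samotij, or by a container-type argument. Your sketch has no such mechanism. A secondary but real problem: your prescription to remove an edge $e$ ``so that every subgraph achieving the maximum in $m_2(H)$ remains intact in $H-e$'' is impossible whenever $H$ is strictly $2$-balanced (then $H$ itself is the unique maximizer), so the induction cannot even start in the most important case.
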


Note that $\binom{n^2}m^{e(H)}$ is the number of graphs with vertex set $V_1\cup\dots\cup V_k$ with $m$ edges between each pair $(V_i,V_j)$ when $ij\in E(H)$ and no edges otherwise. Thus, we can interpret the conjecture as follows: the probability that a random such graph belongs to the bad set $\G^*(H,n,m,m/n^2,\e)$ is at most  $\beta^m$.

The rough idea of the conjecture is that the probability that a graph is bad is so small that a simple union bound tells us that with high probability a random graph does not contain any bad graph -- which implies that we may use the sparse embedding lemma we need. In other words, $(\e,p)$-regularity on its own does not suffice, but if you know in addition that your graph is a subgraph of a sparse random graph, then with high probability it does suffice.

More precisely, let $G$ be a random graph with $N$ vertices and edge probability $p$ and let $n = \eta N$ and $m = d p n^2 \geq n$. Then the expected number of subgraphs of $G$ of the form $\mathcal{G}^*(H, n, m, p, \epsilon)$ is at most 
\[p^{m e(H)} \beta^m \binom{n^2}{m}^{e(H)} \binom{N}{n}^{v(H)} \leq p^{m e(H)} \beta^m \left(\frac{e}{d p}\right)^{m e(H)} \left(\frac{e}{\eta}\right)^{n v(H)} \leq \beta^m \left(\frac{e}{d}\right)^{m e(H)} \left(\frac{e}{\eta}\right)^{m v(H)}.\]
Therefore, choosing $\beta$ to be sufficiently small in terms of $d, \eta$, and $H$, the probability that $G$ contains a graph in $\G^*(H,n,m,p,\e)$ is very small. By summing over the possible values of $n$ and $m$, we may rule out such bad subgraphs for all $n$ and $m$ with $n \geq \eta N$ and $m \geq d p n^2$.


This does not give us a counting lemma for $(\e,p)$-regular subgraphs of $G$, but it does at least tell us that every $(\e,p)$-regular subgraph of $G$ with sufficiently dense pairs in the right places contains a canonical copy of $H$. In other words, it gives us an embedding lemma, which makes it suitable for several applications to embedding results. For example, as noted in \cite{KLR97}, it is already sufficiently strong that a straightforward application of the sparse regularity lemma then allows one to derive the following theorem, referred to as Tur\'an's theorem for random graphs, which was eventually proved in a different way by Conlon and Gowers \cite{CG12} (for strictly balanced graphs, i.e., those for which $m_2(H) > m_2(H')$ for every proper subgraph $H'$ of $H$) and, independently, Schacht \cite{S12} (see also \cite{BMS12, ST12}). We remark that this theorem was the original motivation behind Conjecture~\ref{conj:KLR} -- see Section 6 of \cite{KLR97}. Following \cite{CG12}, let us say that a graph $G$ is $(H, \e)$-{\it Tur\'an} if every subgraph of $G$ with at least 
\[\left(1 - \frac{1}{\chi(H) - 1} + \e\right) e(G)\]
edges contains a copy of $H$. Here $\chi(H)$ is the chromatic number of $H$.
 
\begin{theorem} \label{RelativeTuran}
For every $\e > 0$ and every graph $H$, there exist positive
constants $c$ and $C$ such that
\[
\lim_{n \rightarrow \infty} \mathbb{P} \big(G_{n,p} \mbox{ is $(H,\e)$-Tur{\'a}n}\big) =
\begin{cases}
0, & \text{if $p < c n^{-1/m_2(H)}$}, \\
1, & \text{if $p > C n^{-1/m_2(H)}$}.
\end{cases}
\]
\end{theorem}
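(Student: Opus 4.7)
The plan is to handle the two halves separately. For the 0-statement, fix a subgraph $H^{*}\subseteq H$ with $(e(H^{*})-1)/(v(H^{*})-2)=m_2(H)$. The expected number of labelled copies of $H^{*}$ in $G_{n,p}$ is of order $n^{v(H^{*})}p^{e(H^{*})}$, which for $p<cn^{-1/m_2(H)}$ is bounded by $c^{e(H^{*})-1}\cdot n^{2}p$. Since $e(G_{n,p})=(1+o(1))\binom{n}{2}p$ a.a.s., Markov's inequality together with a concentration bound on $e(G_{n,p})$ gives that a.a.s.\ we can destroy every copy of $H^{*}$, hence every copy of $H$, by deleting at most $c''\cdot e(G_{n,p})$ edges, where $c''\to 0$ as $c\to 0$. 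Choosing $c$ small enough in terms of $\e$ and $\chi(H)$, the remaining subgraph is $H$-free and still has more than $(1-1/(\chi(H)-1)+\e)e(G_{n,p})$ edges, so $G_{n,p}$ is not $(H,\e)$-Tur\'an.

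For the 1-statement, fix any subgraph $G$ of $G_{n,p}$ with at least $(1-1/(\chi(H)-1)+\e)e(G_{n,p})$ edges. I would apply Theorem~\ref{thm:sparsereg} to $G$ with a small parameter $\e'$, using the fact that $G_{n,p}$ is a.a.s.\ $(\eta,p,D)$-upper-uniform to guarantee the same for $G$. This produces an $(\e',p)$-regular equipartition $V_{1},\dots,V_{t}$. Form a reduced graph $R$ on $[t]$ by placing an edge $ij$ whenever $(V_{i},V_{j})$ is $(\e',p)$-regular in $G$ and has density at least $\delta p$ for a small $\delta$. The pairs omitted from $R$, namely irregular pairs, low-density pairs, and intra-part edges, contribute only $O(\e'+\delta)\cdot n^{2}p$ to $e(G)$, so choosing $\e',\delta\ll\e$ ensures $e(R)\ge(1-1/(\chi(H)-1)+\e/2)\binom{t}{2}$. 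By the dense Erd\H{o}s--Stone theorem, $R$ contains a copy of $H$, and pulling back gives a subgraph of $G$ that is an $(\e',p)$-regular blow-up of $H$ with every pair of density at least $\delta p$.

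The final step is to convert this regular $H$-blow-up into an honest canonical copy of $H$, which is exactly what a K\L R-type embedding lemma accomplishes: the computation after Conjecture~\ref{conj:KLR} shows how the bound $\abs{\G^{*}(H,n,m,p,\e')}\le\beta^{m}\binom{n^{2}}{m}^{e(H)}$, together with a union bound over $n\ge\eta N$ and $m\ge\delta pn^{2}$, implies that a.a.s.\ $G_{n,p}$ contains no bad $(\e',p)$-regular $H$-blow-up of density $\gtrsim p$. This rules out $H$-freeness of the blow-up found above and supplies the required copy of $H$ in $G$. The main obstacle in the plan is this last ingredient: Conjecture~\ref{conj:KLR} fixes the edge count and density of \emph{every} pair at a single value $m/n^{2}=p$, whereas the sparse regularity lemma produces pairs whose densities range over $[\delta p,Dp]$. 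Bridging this mismatch requires an averaging/slicing argument reducing the multi-density blow-up to finitely many single-density ones, followed by a random sub-sampling trick to pin the edge counts down to the specific value $m$; this reduction is the routine but technical step that marries the regularity lemma output with the K\L R statement.
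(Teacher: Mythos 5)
The paper omits the deduction of Theorem~\ref{RelativeTuran}; it explicitly declines to give the argument, calling the $1$-statement a ``fairly standard application of the regularity method,'' and the $0$-statement is inherited from earlier literature. Your outline of the $1$-statement is indeed the standard and intended route: sparse regularity plus cleaning (which the paper packages as Proposition~\ref{prop:sparseregclean}), Erd\H{o}s--Stone on the reduced graph, and a K\L R-type embedding to pull back a canonical copy of $H$. You invoke Conjecture~\ref{conj:KLR} and the first-moment computation following it; within this paper the appropriate and self-contained tool is Theorem~\ref{thm:main}(i), or better its multi-density extension Proposition~\ref{prop:main}(i), which is set up precisely to absorb the ``routine but technical'' density-mismatch you correctly flag between the single-density formulation of the conjecture and the range of densities the regularity lemma actually produces. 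So your diagnosis of that obstacle and your proposed remedy (slicing and random sub-sampling to a fixed edge count) are correct.

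Your $0$-statement, however, has a genuine gap: Markov's inequality alone does not give the a.a.s.\ conclusion. Write $X$ for the number of copies of $H^{*}$ in $G_{n,p}$. You correctly compute $\mathbb{E}[X]\leq c^{e(H^{*})-1}n^{2}p$, but at the critical scale $p=\Theta\bigl(n^{-1/m_2(H)}\bigr)$ this is a \emph{fixed constant} multiple of $e(G_{n,p})$, not $o(e(G_{n,p}))$. Markov then gives only $\mathbb{P}\bigl(X>\delta n^{2}p\bigr)\leq c^{e(H^{*})-1}/\delta$, a constant bound that does not tend to $0$ as $n\to\infty$. To run the deletion argument a.a.s.\ you must show $X$ concentrates: verify $\operatorname{Var}(X)=o\bigl(\mathbb{E}[X]^{2}\bigr)$, which reduces (by the standard second-moment computation for subgraph counts) to checking $n^{v(F)}p^{e(F)}\to\infty$ for every $F\subseteq H^{*}$ with $e(F)\geq 1$. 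This holds at $p=cn^{-1/m_2(H)}$ because $m_2(H)\geq (e(F)-1)/(v(F)-2)$ for every such $F$ with $v(F)\geq 3$, giving $v(F)-e(F)/m_2(H)\geq 2-1/m_2(H)>0$ once $\Delta(H)\geq 2$ (the case $\Delta(H)\leq 1$ being trivial). Chebyshev's inequality then yields $X\leq 2c^{e(H^{*})-1}n^{2}p$ a.a.s., and the rest of your deletion argument goes through once $c$ is chosen so that $2c^{e(H^{*})-1}<1/(\chi(H)-1)-\e$.
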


The K\L R conjecture has attracted considerable attention over the past two decades and has been verified for a number of small graphs. It is straightforward to verify that it holds for all graphs $H$ that do not contain a cycle. In this case, the class $\G^*(H,n,m,p,\e)$ will be empty. The cases $H = K_3$, $K_4$, and $K_5$ were resolved in \cite{KLR96}, \cite{GPSST07}, and \cite{GSS04}, respectively. In the case when $H$ is a cycle, the conjecture was proved in~\cite{B02, GKRS07} (see also~\cite{KK97} for a slightly weaker version). Very recently, it was proved for all balanced graphs, that is, those graphs $H$ for which $m_2(H) = \frac{e(H) - 1}{v(H) - 2}$, by Balogh, Morris, and Samotij~\cite{BMS12} and by Saxton and Thomason~\cite{ST12} in full generality.

Besides implying Theorem~\ref{RelativeTuran}, Conjecture~\ref{conj:KLR} is also sufficient for transferring many other classical extremal results about graphs to subgraphs of the random graph $G_{n,p}$, including Ramsey's theorem~\cite{R30} and the Erd\H{o}s-Simonovits stability theorem~\cite{Si68}. However, there are situations where an embedding result is not enough: rather than just a single copy of $H$, one needs to know that there are many copies. That is, one needs something more like a full counting lemma. In this paper, we shall state and prove such a ``counting version" of the K\L R conjecture for subgraphs of random graphs. Later in the paper we shall give examples of classical theorems whose sparse random versions do not follow from the K\L R conjecture but do follow from our counting result.

Our main theorem is the following.

\begin{theorem} \label{thm:main}
For every graph $H$ and every $\delta, d > 0$, there exist $\e, \xi > 0$ with the following property. For every $\eta > 0$, there is a $C > 0$ such that if $p \geq C N^{-1/m_2(H)}$, then a.a.s.\ the following holds in $G_{N,p}$:
\begin{enumerate}
\item[(i)]
For every $n \geq \eta N$, $m \geq d p n^2$, and every subgraph $G$ of $G_{N,p}$ in $\G(H,n,m,p, \e)$,
\begin{equation} \label{eqn:lowercount}
G(H) \geq \xi \left(\frac{m}{n^2}\right)^{e(H)} n^{v(H)}.
\end{equation}

\item[(ii)]
Moreover, if $H$ is strictly balanced, that is, if $m_2(H) > m_2(H')$ for every proper subgraph $H'$ of $H$, then 
\begin{equation} \label{eqn:exactcount}
G(H) = (1 \pm \delta) \left(\frac{m}{n^2}\right)^{e(H)} n^{v(H)}.
\end{equation}

\end{enumerate}
\end{theorem}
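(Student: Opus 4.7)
The strategy is to reduce Theorem~\ref{thm:main} to a counting analogue of Conjecture~\ref{conj:KLR}, asserting that the subfamily of $\G(H,n,m,p,\e)$ on which $G(H)$ deviates substantially from the target value $(m/n^2)^{e(H)} n^{v(H)}$ has cardinality at most $\beta^m \binom{n^2}{m}^{e(H)}$ for any prescribed $\beta > 0$---where ``deviates'' means ``falls below $\xi$ times the target'' for the lower bound of part (i) and ``exceeds $(1+\delta)$ times the target'' for part (ii). Granting this, Theorem~\ref{thm:main} follows by the union-bound computation already performed in the text preceding its statement: summing the expected number of bad part-systems inside $G_{N,p}$ over $n \geq \eta N$ and $m \geq d p n^2$ yields $o(1)$ once $\beta$ is chosen small enough in terms of $d$, $\eta$, and $H$.

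For the lower-tail statement underlying part (i), I would argue by supersaturation. If $G \in \G(H,n,m,p,\e)$ satisfies $G(H) < \xi (m/n^2)^{e(H)} n^{v(H)}$ with $\xi$ sufficiently small, one ought to be able to delete a single edge from each canonical copy of $H$, balanced across the $e(H)$ bipartite pairs $G[V_i, V_j]$, to produce $G' \in \G(H,n,m',p,2\e)$ with $m' \geq (1 - o_\xi(1)) m$ and $G'(H) = 0$, so that $G' \in \G^*(H,n,m',p,2\e)$. The number of possible preimages $G$ of each such $G'$ is at most $\binom{n^2}{m - m'}^{e(H)}$, and summing this against a K\L R-style upper bound on $|\G^*|$---a variant of Conjecture~\ref{conj:KLR} which I would establish alongside the counting argument---produces the desired $\beta^m \binom{n^2}{m}^{e(H)}$ bound on the set of deficient $G$.

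For the upper-tail statement underlying part (ii), strict balancedness of $H$ is essential: in its absence, planting a denser subgraph $H' \subsetneq H$ inflates $G(H)$ in a way that cannot be ruled out by counting alone. For strictly balanced $H$, however, I would invoke concentration of canonical $H$-counts in $G_{N,p}$ itself (via the second moment method or Kim--Vu polynomial concentration), which shows that a.a.s.\ $G_{N,p}(H)$ restricted to any tuple of parts $V_1, \dots, V_k$ of size $n \geq \eta N$ is at most $(1 + o(1)) p^{e(H)} n^{v(H)}$. Combining this with a dyadic subdivision of the density parameter $m/(p n^2) \in [d, 1 + o(1)]$ and applying the lower bound of part (i) at slightly shifted scales should pinch $G(H)$ from above by $(1 + \delta)(m/n^2)^{e(H)} n^{v(H)}$.

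The principal obstacle lies in the balanced-deletion step: a careless choice of representative edges could concentrate all deletions on a single pair $G[V_i, V_j]$ and destroy its $(\e, p)$-regularity. My plan to handle this is a two-stage argument---first remove the (necessarily small) set of edges lying in an unusually large number of $H$-copies via a convexity/averaging argument, then balance deletions for the remaining, nearly-uniform copies by a greedy or fractional matching argument. A secondary subtlety is the upper bound of part (ii) when $m/n^2 \ll p$, where the crude bound $G(H) \leq G_{N,p}(H)$ is too weak; strict balancedness is then indispensable to rule out a sparse sub-structure of $G_{N,p}$ contributing disproportionately many copies to $G$.
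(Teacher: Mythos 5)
Your proposal takes a genuinely different route from the paper (which proves part (i) via a multiple-exposure argument in the style of Schacht and Samotij, and part (ii) via the dense-model transference machinery of Conlon--Gowers), but the route you sketch has gaps that do not appear repairable in the form described.

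For part (i), the supersaturation step fails for a quantifier reason. You delete one edge from each canonical copy of $H$ and assert $m' \geq (1 - o_\xi(1))m$. The number of edges removed is bounded by $G(H) < \xi (m/n^2)^{e(H)} n^{v(H)}$, and the ratio of this to $m$ is $\xi (m/n^2)^{e(H)-1} n^{v(H)-2}$. At the lower end $m = dpn^2$ with $p = Cn^{-1/m_2(H)}$ this ratio is at least $\xi\, d^{e(H)-1} C^{e(H)-1}$, and for larger $m$ (up to $n^2$) it can grow like $\xi\, n^{v(H)-2}$. Since the quantifier order in Theorem~\ref{thm:main} fixes $\xi$ (depending only on $H, \delta, d$) \emph{before} $\eta$ and hence before $C$, and since $m$ ranges freely up to $n^2$, you cannot make this ratio small: the deletion may destroy an unbounded fraction of the edges, so the resulting graph neither remains in $\G(H,n,m',p,2\e)$ with $m' \approx m$ nor admits the preimage-counting you propose. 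Separately, the ``variant of Conjecture~\ref{conj:KLR}'' you propose to ``establish alongside'' \emph{is} the hard content; the only known general proofs of that bound (Balogh--Morris--Samotij, Saxton--Thomason) rely on the hypergraph container method, which you do not invoke. Naming the bound does not supply it, and the paper specifically avoids needing it by proving the stronger, conditional Theorem~\ref{thm:part-i} directly.

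For part (ii), the ``pinch from above'' argument does not close: $G(H) + \bar G(H) \neq G_{N,p}(H)\big|_{V_1,\dots,V_k}$, because copies of $H$ using a \emph{mixture} of edges from $G$ and from its complement in $G_{N,p}$ are omitted from both summands. Controlling those mixed terms is exactly what the paper's transference argument does, through the telescoping identity $\mu_H(g,\dots,g) = \mu_H(\gamma,\dots,\gamma) - \sum_i \mu_H(g,\dots,g,\gamma-g,\gamma,\dots,\gamma)$ together with the one-sided dense-model bound for \emph{each} mixed term; without this (or something equivalent), concentration of $G_{N,p}(H)$ and the lower bound from part (i) do not combine into an upper bound on $G(H)$.
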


Note that strictly speaking the statements above depend not just on the graph $G$ but on the partition $V_1\cup\dots\cup V_k$ that causes $G$ to belong to $\G(H,n,m,p,\e)$. Roughly speaking, (i) tells us that if $G$ contains ``many" edges in the right places, then there are ``many" copies of $H$, while (ii) tells us that the number of copies of $H$ is roughly what one would expect for a random graph with pairs of the same densities. We note that a result similar to (ii) holds for all graphs if one is willing to allow some extra logarithmic factors. We will say more about this in the concluding remarks.

The proof of part (i) employs the ideas of Schacht \cite{S12}, as modified by Samotij \cite{Sj12}, and as a result part (i) holds with probability at least $1 - \exp(-b p N^2)$ for some $b > 0$ depending on $H, \eta$, and $d$. Part (ii) is proved using the results of Conlon and Gowers \cite{CG12} and hence hold with probability at least $1 - N^{-B}$ for any fixed $B > 0$, provided that $C$ and $N$ are sufficiently large. Since part (ii) gives an upper bound as well as a lower bound, standard results on upper tail estimates imply that the result cannot hold with the same exponential probability as part (i) (see, for example, \cite{JOR04}).

We note that weaker versions of Theorem~\ref{thm:main}, applicable for larger values of $p$, may be found in some earlier papers on the K\L R conjecture and Tur\'an's theorem  \cite{GMS07, KRS04} and more recent work on sparse regularity in pseudorandom graphs \cite{CFZ12}. We also believe that a variant of part (i) of Theorem~\ref{thm:main} may be derivable from the work of Saxton and Thomason \cite{ST12}, though they have not stated it in these terms.

\subsection{Known applications}

It is not hard to show that Theorem \ref{thm:main}, like Conjecture~\ref{conj:KLR}, implies the best possible sparse random analogues of many classical theorems in extremal graph theory. In particular, it implies Theorem~\ref{RelativeTuran} above. It also implies the following sparse random version of the Erd{\H o}s-Simonovits stability theorem, which was first proved by Conlon and Gowers~\cite{CG12} for all strictly balanced graphs and later extended to general $H$ by Samotij~\cite{Sj12}, who adapted Schacht's method for this purpose.

\begin{theorem} \label{thm:stab}
For every graph $H$ and every $\delta > 0$, there exist $\e, C > 0$ such that if $p \geq C n^{-1/m_2(H)}$, then a.a.s.\ every $H$-free subgraph $G' \subseteq G_{n,p}$ with $e(G') \geq \left(1-\frac{1}{\chi(H)-1} - \e\right)\binom{n}{2}p$ may be made $(\chi(H)-1)$-partite by removing at most $\delta p n^2$ edges.
\end{theorem}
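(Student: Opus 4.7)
The plan is the standard regularity-method derivation of a stability result from an embedding lemma, with Theorem~\ref{thm:main}(i) playing the role of the embedding lemma in the sparse setting. Concretely, I would first condition on the a.a.s.\ events that $G_{n,p}$ is $(\eta',p,D)$-upper-uniform for some absolute constant $D$ and any $\eta'>0$ of my choosing, and that the conclusion of Theorem~\ref{thm:main}(i) holds for $G_{n,p}$ (with parameters $H,\delta_1,d_1$ to be tuned). Given a hypothetical $H$-free subgraph $G'\subseteq G_{n,p}$ with $e(G')\geq\bigl(1-\tfrac{1}{\chi(H)-1}-\epsilon\bigr)\binom{n}{2}p$, apply the sparse regularity lemma (Theorem~\ref{thm:sparsereg}) to $G'$, which inherits upper-uniformity from $G_{n,p}$, to obtain an $(\epsilon',p)$-regular equipartition $V_1,\dots,V_t$ with $t_0\leq t\leq T$. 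Form the reduced graph $R$ on $[t]$ whose edges correspond to $(\epsilon',p)$-regular pairs of density at least $d_1p$. A routine accounting---using upper-uniformity of $G_{n,p}$ to bound the contributions of edges inside parts, in irregular pairs, and in low-density pairs, each by a small multiple of $pn^2$---combined with the edge-count hypothesis on $G'$ yields $e(R)\geq\bigl(1-\tfrac{1}{\chi(H)-1}-o(1)\bigr)\binom{t}{2}$, where the $o(1)$ tends to $0$ as $\epsilon,\epsilon',d_1,1/t_0\to0$.

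Next I would show that $R$ is $H$-free. If $R$ contained a copy of $H$ on vertices $i_1,\dots,i_k$, then after the standard reduction to a common density---for each $ab\in E(H)$, extract an $(\epsilon_0,p)$-regular bipartite subgraph between $V_{i_a}$ and $V_{i_b}$ with exactly $m:=d_1p(n/t)^2$ edges, which exists because the original pair is $(\epsilon',p)$-regular of density at least $d_1p$---the resulting subgraph of $G'$ would lie in $\mathcal{G}(H,n/t,m,p,\epsilon_0)$. Applying Theorem~\ref{thm:main}(i) with $\eta=1/T$ then supplies at least $\xi d_1^{e(H)}p^{e(H)}(n/t)^{v(H)}\geq 1$ canonical copies of $H$ inside $G'$, contradicting $H$-freeness. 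Hence $R$ is $H$-free; since $R$ has close to the extremal number of edges, the classical Erd\H{o}s--Simonovits stability theorem applied to $R$ lets me delete at most $\delta_2 t^2$ edges of $R$ to obtain a $(\chi(H)-1)$-partite graph, for any prescribed $\delta_2>0$ once the earlier parameters are sufficiently small.

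Let $W_1,\dots,W_{\chi(H)-1}$ be the resulting colour classes of $[t]$ and set $U_s=\bigcup_{i\in W_s}V_i$. To make $G'$ itself $(\chi(H)-1)$-partite with partition $U_1,\dots,U_{\chi(H)-1}$, I must delete all edges of $G'$ lying inside some $U_s$. These split into four groups: (a) edges inside a single $V_i$; (b) edges in $(\epsilon',p)$-irregular pairs; (c) edges in regular pairs of density less than $d_1p$; (d) edges in the ``bad'' high-density regular pairs $(V_i,V_j)$ with $i,j$ in the same $W_s$. Using upper-uniformity to bound the edge count in any single pair by $Dp(n/t)^2$, these contributions are at most $Dpn^2/t$, $(\epsilon')^2 Dpn^2$, $d_1pn^2/2$, and $\delta_2 Dpn^2$ respectively; choosing $1/t_0,\epsilon',d_1,\delta_2$ small in terms of $\delta$ and $D$ then makes the total at most $\delta pn^2$, as required.

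The main technical obstacle I anticipate is the reduction used to invoke Theorem~\ref{thm:main}(i): the sparse regularity lemma outputs pairs of varying densities, whereas the theorem is stated for graphs in $\mathcal{G}(H,n,m,p,\epsilon)$ with a single fixed edge count $m$ per pair. Extracting the required uniform-density $(\epsilon_0,p)$-regular subgraph from an $(\epsilon',p)$-regular pair of density at least $d_1p$ is a standard probabilistic slicing argument (as anticipated by the authors in the remarks following Conjecture~\ref{conj:KLR}), but care is needed to track how $\epsilon'$ degrades across the nested applications and to ensure this degradation remains tolerable relative to the $\epsilon_0$ provided by Theorem~\ref{thm:main}(i).
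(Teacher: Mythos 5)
The paper explicitly omits the deduction of Theorem~\ref{thm:stab} from Theorem~\ref{thm:main}, describing it as a ``fairly standard application of the regularity method'' and pointing to~\cite{GS05, K97}. Your outline is exactly such an application and is essentially correct: apply the sparse regularity lemma to $G'$, form a reduced graph $R$, use Theorem~\ref{thm:main}(i) as a sparse embedding lemma to conclude that $R$ is $H$-free, apply the dense Erd\H{o}s--Simonovits stability theorem to $R$, and pull the resulting $(\chi(H)-1)$-partition back to $G'$. The paper's own machinery, Propositions~\ref{prop:sparseregclean} and~\ref{prop:main}, already packages the cleaning step and the reduction from varying pair-densities $\bm=(m_{ij})$ to a single density $m$; your ad hoc probabilistic slicing achieves the same thing and correctly anticipates where care is needed in the parameter dependencies, so this is a matter of convenience rather than of substance.

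One point of genuine care that your write-up elides: you cannot condition on $(\eta',p,D)$-upper-uniformity for an unspecified ``absolute constant $D$.'' The edge-count transfer from $G''$ to $R$ reads $e(R)\geq e(G'')/\bigl(Dp(n/t)^2\bigr)$, which loses a multiplicative factor of $D$; to land $e(R)\geq\bigl(1-\tfrac{1}{\chi(H)-1}-o(1)\bigr)\binom{t}{2}$ and make dense stability applicable, you need $D=1+\epsilon''$ with $\epsilon''$ tunable, not merely bounded. This is indeed available a.a.s.\ for $G_{n,p}$, and the paper is careful about it, taking $D=1+\gamma'$ in the applications it does write out (Hajnal--Szemer\'edi, Andr\'asfai--Erd\H{o}s--S\'os), reserving $D=2$ for the removal lemma where no such sharp density transfer is needed. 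A minor arithmetic slip in your final bookkeeping: the irregular pairs contribute at most $\epsilon' Dpn^2$ edges (there are at most $\epsilon' t^2$ of them, each holding at most $Dp(n/t)^2$ edges), not $(\epsilon')^2 Dpn^2$; this does not affect the conclusion, which only needs $\epsilon'$ small.
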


Another easy consequence of Theorem~\ref{thm:main} is the $1$-statement of the following sparse Ramsey theorem, originally proved by R{\"o}dl and Ruci\'nski~\cite{RR95} in 1995. We remark here that the $1$-statement of Theorem~\ref{thm:Ramsey-Gnp} below also extends to general hypergraphs~\cite{CG12, FRS10}. Following~\cite{CG12}, we say that a graph $G$ is $(H,r)$-\textit{Ramsey} if every $r$-colouring of the edges of $G$ yields a monochromatic copy of $H$.

\begin{theorem}
\label{thm:Ramsey-Gnp}
For every graph $H$ that is not a star forest or a path of length $3$ and for every positive integer~$r \geq 2$, there exist constants $c, C > 0$ such that 
\[
\lim_{n \rightarrow \infty} \mathbb{P} \big(G_{n,p} \mbox{ is $(H,r)$-Ramsey}\big) =
\begin{cases}
0, & \text{if $p < c n^{-1/m_2(H)}$}, \\
1, & \text{if $p > C n^{-1/m_2(H)}$}.
\end{cases}
\]
\end{theorem}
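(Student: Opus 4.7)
The 0-statement is R\"odl and Ruci\'nski's original threshold result from \cite{RR95}, proved by an independent deletion/random-colouring argument for which the graphs excluded in the statement (star forests and paths of length $3$) are genuine exceptions. I focus on the 1-statement, which holds for every fixed $H$ and every $r \ge 2$ and which I deduce from Theorem \ref{thm:main}(i) by the standard regularity plus classical Ramsey argument. Let $R = R(H;r)$ denote the usual $r$-colour Ramsey number of $H$.

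Apply Theorem \ref{thm:main}(i) with $d := 1/(2r)$ and any $\delta > 0$ (only the lower bound is needed) to extract $\e, \xi > 0$. Invoke the $r$-coloured variant of Theorem \ref{thm:sparsereg} (discussed below) with regularity parameter $\e$ and a lower bound $t_0$ on the number of parts large enough that any graph on $t \ge t_0$ vertices missing only $o(t^2)$ edges still contains a $K_R$; this yields $\eta'$ and $T$. Set $\eta := \eta'/(2T)$ and feed this $\eta$ back into Theorem \ref{thm:main} to obtain $C$. Fix $p \ge C N^{-1/m_2(H)}$ and condition on the a.a.s.\ event that $G := G_{N,p}$ is $(\eta', p, 2)$-upper-uniform and satisfies the conclusion of Theorem \ref{thm:main}(i).

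Given any $r$-colouring $E(G) = E_1 \cup \cdots \cup E_r$, the coloured regularity lemma produces an equipartition $V_1, \ldots, V_t$ with $t_0 \le t \le T$ such that each colour class $G_s := (V(G), E_s)$ is $(\e, p)$-regular on all but a negligible fraction of pairs. Upper uniformity rules out all but $o(t^2)$ pairs with total $G$-density below $p/2$, so the reduced graph on $[t]$ whose edges are pairs that are $(\e, p)$-regular in every colour and have $G$-density at least $p/2$ contains a copy of $K_R$. Colour each edge $(V_i, V_j)$ of this $K_R$ by some $s \in [r]$ with $d_{G_s}(V_i, V_j) \ge p/(2r) = dp$, which exists by pigeonhole. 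Classical Ramsey yields a monochromatic copy of $H$ in this $r$-coloured $K_R$, say of colour $s^\ast$ on vertex classes $V_{i_1}, \ldots, V_{i_k}$. The restriction of $G_{s^\ast}$ to these parts lies in $\G(H, n', m, p, \e)$ with $n' = \lfloor N/t \rfloor \ge \eta N$ and $m \ge d p (n')^2$, so Theorem \ref{thm:main}(i) supplies at least $\xi d^{e(H)} (n')^{v(H)} > 0$ canonical --- and hence monochromatic --- copies of $H$.

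The one real auxiliary ingredient beyond Theorem \ref{thm:main} is the coloured sparse regularity lemma: a single equipartition that is $(\e, p)$-regular simultaneously for each of the $r$ colour classes. I expect this to be routine --- either apply Theorem \ref{thm:sparsereg} to each colour class in turn and take a common refinement (using that refining an $(\e, p)$-regular partition by a bounded factor preserves $(O(\e), p)$-regularity), or rerun the mean-square-density proof of Theorem \ref{thm:sparsereg} with a potential function summing over the $r$ colour classes. No new ideas beyond those already present in the dense setting appear necessary.
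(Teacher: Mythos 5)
The paper does not actually give a proof of this theorem; it states that the $1$-statement follows from Theorem~\ref{thm:main} by a ``fairly standard application of the regularity method'' and defers to references. Your outline is precisely that standard argument (coloured sparse regularity lemma, reduced graph, Tur\'an to get $K_{R(H;r)}$, classical Ramsey on the reduced $K_R$, then the counting lemma to embed $H$), so structurally you are doing what the authors have in mind.

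There is, however, one concrete gap. After the coloured regularity step and the Ramsey step in the reduced graph, the $\binom{k}{2}$ bipartite graphs $G_{s^\ast}(V_{i_a},V_{i_b})$ will generally have \emph{different} edge counts $m_{ab}$, all at least $dp(n')^2$ but not all equal. The family $\G(H,n',m,p,\e)$ used in Theorem~\ref{thm:main} requires a single common value of $m$, so the restriction of $G_{s^\ast}$ to those classes does not literally ``lie in $\G(H,n',m,p,\e)$'' as you assert. This is exactly why the paper introduces Proposition~\ref{prop:main}, the version of the counting statement for a vector $\bm=(m_{ij})$ of edge counts; it is proved from Theorem~\ref{thm:main} by randomly subsampling each pair down to a common $m$ and applying Hoeffding to preserve regularity. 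You should invoke Proposition~\ref{prop:main}(i) here (or reproduce that subsampling argument). A more cosmetic point: ``missing only $o(t^2)$ edges'' is slightly off --- what you have is a fixed constant fraction $\approx r\e$ of bad pairs, and you need $r\e$ smaller than the Tur\'an threshold $\tfrac{1}{2(R-1)}$; since decreasing $\e$ only strengthens Theorem~\ref{thm:main}, this is easily arranged, but the quantifier order should be made explicit rather than packaged as $t_0$ being large. The remarks on the coloured sparse regularity lemma and on the $0$-statement are fine.
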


We will omit the deductions of Theorem~\ref{thm:stab} and the $1$-statements of Theorems~\ref{RelativeTuran} and~\ref{thm:Ramsey-Gnp} from Theorem \ref{thm:main}. These are fairly standard applications of the regularity method (see, for example, \cite{GS05, K97}).

\subsection{A sparse removal lemma for graphs}

The triangle removal lemma of Ruzsa and Szemer{\'e}di~\cite{RS78} states that for every $\delta > 0$ there exists an $\e > 0$ such that if $G$ is any graph on $n$ vertices that contains at most $\e n^3$ triangles, then $G$ may be made triangle-free by removing at most $\delta n^2$ edges. Despite its innocent appearance, this result has several striking consequences. Most notably, it easily implies Roth's theorem~\cite{R53} on $3$-term arithmetic progressions in dense subsets of the integers. For general graphs $H$, a similar statement holds~\cite{ADLRY94,EFR86, Fu95} (see also \cite{CF13, F11}): if an $n$-vertex graph contains $o(n^{v(H)})$ copies of $H$, then it may be made $H$-free by removing $o(n^2)$ edges. This result is known as the graph removal lemma. A sparse random version of the graph removal lemma was conjectured by \L uczak in \cite{Lu06} and proved, for strictly balanced $H$, by Conlon and Gowers~\cite{CG12}. Here, we apply our main result, Theorem~\ref{thm:main}, to extend this result to all graphs $H$.

\begin{theorem} \label{thm:removal-Gnp}
For every $\delta > 0$ and every graph $H$, there exist positive constants $\e$ and $C$ such that if $p \geq Cn^{-1/m_2(H)}$, then the following holds a.a.s.\ in $G_{n,p}$. Every subgraph of $G_{n,p}$ which contains at most $\e p^{e(H)} n^{v(H)}$ copies of $H$ may be made $H$-free by removing at most $\delta p n^2$ edges.
\end{theorem}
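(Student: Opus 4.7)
The proof follows the standard regularity-plus-counting template, now in the sparse random setting: Theorem~\ref{thm:sparsereg} supplies a partition of our subgraph $G\subseteq G_{n,p}$, Theorem~\ref{thm:main}(i) supplies the counting, and a careful cleaning step guarantees that at most $\delta pn^2$ edges need to be deleted to destroy every copy of $H$.

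The plan is to choose constants inside-out. Given $\delta > 0$ and $H$ with $k = v(H)$, set $d = \delta/8$ and apply Theorem~\ref{thm:main}(i) with this $d$ and any dummy first parameter to obtain $\e_0, \xi > 0$. Pick $\e' > 0$ with $\e' \leq \min\{\e_0/(2k),\, d/(2k),\, \delta/16\}$, then feed $\e'$, $D = 2$, and $t_0 \geq 16/\delta$ into Theorem~\ref{thm:sparsereg} to obtain $\eta_0$ and $T$. Finally, apply Theorem~\ref{thm:main} once more with $\eta = 1/(kT)$ to obtain $C$, and define $\e = \xi d^{e(H)}/(2(kT)^{v(H)})$; this will be the $\e$ in the statement.

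By Chernoff and a union bound, $G_{n,p}$ is a.a.s.\ $(\eta_0, p, 2)$-upper-uniform, and this property is inherited by every subgraph. Fix $G \subseteq G_{n,p}$ containing at most $\e p^{e(H)} n^{v(H)}$ copies of $H$. Apply Theorem~\ref{thm:sparsereg} to $G$ to obtain an $(\e', p)$-regular equipartition $V_1, \ldots, V_t$ with $t_0 \leq t \leq T$. Let $G'$ be obtained from $G$ by deleting all edges that are (a) inside some $V_i$, (b) in an $(\e', p)$-irregular pair, or (c) in a pair of density at most $dp$. Using the upper-uniformity bound $e(G_{n,p}[V_i, V_j]) \leq 2p(n/t)^2$, the total deletion cost is at most $pn^2/t + 2\e' p n^2 + dp n^2/2 \leq \delta p n^2$, as required.

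Suppose for contradiction that $G'$ still contains a copy $(v_1, \ldots, v_k)$ of $H$. By (a), the assignment $\sigma\colon i \mapsto (\text{part of } v_i)$ is a proper coloring of $H$; by (b) and (c), each pair $(V_{\sigma(i)}, V_{\sigma(j)})$ with $ij \in E(H)$ is $(\e', p)$-regular with density $\geq dp$. To handle the possibility that non-adjacent vertices of $H$ share a part, sub-partition each $V_i$ uniformly at random into $k$ equal pieces. With positive probability, (1) every sub-pair of a good pair is $(k\e', p)$-regular with density $\geq dp/2$ (the slicing property of sparse regular pairs), and (2) the $k$ vertices $v_1, \ldots, v_k$ end up in pairwise distinct sub-parts (since each $V_i$ contains at most $k$ of them and an event of probability $\geq k!/k^k$ separates them). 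Fixing such a sub-partition, Theorem~\ref{thm:main}(i) applied to the induced configuration on the resulting $k$ sub-parts, each of size $n/(kt) \geq \eta n$, yields at least $\xi d^{e(H)} p^{e(H)} (n/(kT))^{v(H)} > \e p^{e(H)} n^{v(H)}$ canonical copies of $H$ in $G$, contradicting the assumption. Hence $G'$ is $H$-free, as desired. The main obstacle is the parameter juggling together with the non-injective $\sigma$ case; the random sub-partition plus the slicing lemma resolve this cleanly, at the cost of an additional factor of $k$ in $\eta$. All other steps --- the a.a.s.\ upper-uniformity of $G_{n,p}$, the deletion accounting, and the final application of Theorem~\ref{thm:main}(i) --- are routine.
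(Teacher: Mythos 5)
The overall architecture --- sparse regularity, cleaning, and a counting lemma applied to the cluster structure of a surviving copy of $H$ --- is the same as the paper's. However, there is a genuine gap at the key counting step. Theorem~\ref{thm:main}(i) is stated for graphs in $\G(H,n,m,p,\e)$, and by definition this class consists of blow-ups in which \emph{every} pair $(V_i,V_j)$ with $ij\in E(H)$ carries \emph{exactly} $m$ edges. After regularity plus cleaning plus slicing, the sub-pairs you construct carry different numbers of edges (each at least roughly $dp(n')^2/2$, but not equal), so there is no single $m$ for which your configuration lies in $\G(H,n',m,p,\e_0)$, and Theorem~\ref{thm:main}(i) cannot be invoked directly. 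This is precisely why the paper proves Proposition~\ref{prop:main}, an extension of Theorem~\ref{thm:main} that replaces the single parameter $m$ by a vector $\mathbf m=(m_{ij})_{ij\in E(H)}$; the reduction is not free --- it goes by randomly subsampling each pair down to a common $m$, using Hoeffding to show the subsampled graph remains $(2\e,p)$-regular, applying Theorem~\ref{thm:main}(i), and then lifting the lower bound back. Your proof silently assumes this extension without stating or proving it.

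Two smaller points. First, the random sub-partitioning together with your item~(2) is unnecessary: you do not need the particular copy $(v_1,\dots,v_k)$ to survive the slicing. It suffices that there \emph{exists} a choice of $k$ pairwise disjoint sub-parts (one for each vertex of $H$, compatible with the homomorphism $\sigma$) forming an element of $\G(H,n',\mathbf m,p,\e_0)$; a deterministic split of each $V_i$ into $k$ equal pieces and a choice of pieces according to $\sigma$ does the job, as in the paper. The slicing property itself is deterministic, so the ``with positive probability'' framing is also misleading. Second, the constants do not quite close: after slicing, the density guarantee drops to $\ge dp/2$, so the count from the (corrected) application of the counting lemma is $\xi (d/2)^{e(H)} p^{e(H)}(n/(kT))^{v(H)}$, and with your choice $\e = \xi d^{e(H)}/(2(kT)^{v(H)})$ the inequality $\xi (d/2)^{e(H)} p^{e(H)}(n/(kT))^{v(H)} > \e p^{e(H)} n^{v(H)}$ fails whenever $e(H)\ge 2$. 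This is easily fixed by taking $\e$ smaller, e.g.\ $\e = \xi(d/2)^{e(H)}(kT)^{-v(H)}/2$, but as written the accounting is off.
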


Note that if $p \le c n^{-1/m_2(H)}$  for a sufficiently small positive constant $c$ (depending on $H$ and $\delta$), then $H$ has a subgraph $H'$, the expected number of copies of which is at most $\delta pn^2$, so we can remove all copies of $H$ by deleting an edge from each copy of $H'$. Thus, it is natural to conjecture, as \L uczak did, that Theorem \ref{thm:removal-Gnp} actually holds for all values of $p$. For balanced graphs, we may close the gap by taking $\epsilon$ to be sufficiently small in terms of $C$, $\delta$, and $H$. For $p \leq C n^{-1/m_2(H)}$ and $\epsilon < \delta C^{-e(H)}$, the number of copies of $H$ is at most $\epsilon p^{e(H)} n^{v(H)} \leq \epsilon C^{e(H)} p n^2 < \delta p n^2$. Deleting an edge from each copy of $H$ yields the result.

\subsection{The clique density theorem} \label{sec:cliquedensity}

For any $\rho > 0$, let $g_k(\rho, n)$ be the minimum number of copies of $K_k$ which are contained in any graph on $n$ vertices with density at least $\rho$. We then take 
\[
g_k(\rho) = \lim_{n \rightarrow \infty} \frac{g_k(\rho, n)}{\binom{n}{k}}.
\] 
Complete $(k-1)$-partite graphs demonstrate that $g_k(\rho) = 0$ for $\rho \leq 1 - \frac{1}{k-1}$. On the other hand, a robust version of Tur\'an's theorem known as supersaturation \cite{ESi83} tells us that $g_k(\rho) > 0$ for all $\rho > 1 - \frac{1}{k-1}$.

There has been much work~\cite{B76, E62, E69, G59, LS83} on determining $g_k(\rho)$ above the natural threshold $1 - \frac{1}{k-1}$, but it is only in recent years that the exact dependency of $g_k(\rho)$ on $\rho$ has been found for any $k \geq 3$. For $k = 3$ and the interval $\frac{1}{2} \leq \rho \leq \frac{2}{3}$, this was accomplished by Fisher \cite{F89, GS00}, while for a general $\rho$ the value of $g_3(\rho)$ was determined by Razborov~\cite{R07, R08} using flag algebras. Employing different methods, Nikiforov~\cite{N11} reproved the $k = 3$ case and also solved the case $k = 4$. Finally, Reiher~\cite{Re12} recently resolved the general case.

We will show that an analogous theorem holds within the random graph $G_{n,p}$. We note that this theorem, Theorem~\ref{thm:sparsereiher} below, also follows as a direct application of the results of~\cite{CG12}. To state the result, we define, for a subgraph $G'$ of $G_{n,p}$, the relative density of $G'$ in $G_{n,p}$ to be $e(G')/p\binom{n}{2}$.

\begin{theorem} \label{thm:sparsereiher}
For any $k \geq 3$ and any $\e > 0$, there exists a constant $C > 0$ such that if $p \geq C n^{-2/(k+1)}$, then the following holds a.a.s.\ in $G_{n,p}$. Any subgraph $G'$ of $G_{n,p}$ will contain at least $(g_k(\rho) - \e) p^{\binom{k}{2}} \binom{n}{k}$ copies of $K_k$, where $\rho$ is the relative density of $G'$ in $G_{n,p}$.  
\end{theorem}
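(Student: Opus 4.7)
The plan is to combine the sparse regularity lemma (Theorem~\ref{thm:sparsereg}) with part~(ii) of Theorem~\ref{thm:main} applied to $H=K_k$ and with a graphon form of Reiher's clique density theorem. The threshold is compatible because $K_k$ is strictly balanced with $m_2(K_k)=(k+1)/2$, so $n^{-1/m_2(K_k)}=n^{-2/(k+1)}$ and Theorem~\ref{thm:main}(ii) is available.

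Fix $k\geq 3$ and $\e>0$. I would choose constants $0<\delta\ll d\ll \e$ and feed them into Theorem~\ref{thm:main}(ii) with $H=K_k$ to get $\e_1,\xi>0$. Next set $\e_2:=\min(\e_1,d/10)$, $D:=2$, and a sufficiently large $t_0$ in Theorem~\ref{thm:sparsereg} to obtain $\eta'>0$ and $T$, and then set $\eta:=1/T$ in Theorem~\ref{thm:main} to obtain $C_1$; take $C\geq C_1$ large enough that a.a.s.\ $G_{n,p}$ is $(\eta',p,3/2)$-upper-uniform and every pair of disjoint subsets of size at least $\eta'n$ spans density $(1+o(1))p$ (Chernoff). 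These properties are inherited by every $G'\subseteq G_{n,p}$.

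Given $G'\subseteq G_{n,p}$ of relative density $\rho$, apply Theorem~\ref{thm:sparsereg} to produce an $(\e_2,p)$-regular equipartition $V_1,\dots,V_t$ with $t_0\leq t\leq T$, and form the weighted reduced graph $R$ on $[t]$ by setting $w_{ij}:=d(V_i,V_j)/p$ whenever $(V_i,V_j)$ is $(\e_2,p)$-regular with $d(V_i,V_j)\geq dp$, and $w_{ij}:=0$ otherwise. The Chernoff bound on $G_{n,p}$ forces $w_{ij}\in[0,1+o(1)]$; a direct accounting for edges of $G'$ lost in irregular pairs (at most $\tfrac{3}{2}\e_2 pn^2$), in sparse pairs ($\leq \tfrac{d}{2}pn^2$), and inside the $V_i$ ($\leq n^2/(2t_0)$) gives $\sum_{i<j}w_{ij}\geq (\rho-\e/10)\binom{t}{2}$. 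For each $k$-subset $S\subseteq[t]$ on which $\prod_{ij\in\binom{S}{2}}w_{ij}>0$, the partite subgraph of $G'$ on $\bigcup_{i\in S}V_i$ lies in $\G(K_k,n/t,\cdot,p,\e_1)$ with varying per-pair edge counts $m_{ij}$, and Theorem~\ref{thm:main}(ii) (in its natural extension to varying pair densities, immediate from its proof) yields $(1\pm\delta)p^{\binom{k}{2}}\prod_{ij\in\binom{S}{2}}w_{ij}\cdot(n/t)^k$ canonical copies of $K_k$, each a distinct $K_k$ in $G'$.

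It remains to lower bound $\sum_{S\in\binom{[t]}{k}}\prod_{ij\in\binom{S}{2}}w_{ij}$. Interpreting $(w_{ij}/(1+o(1)))$ as a symmetric step graphon $W\colon[0,1]^2\to[0,1]$ with edge density $\int W\geq \rho-\e/5$, Reiher's clique density theorem—extended from graphs to graphons by standard graph-limit approximation together with the cut-norm continuity of $t(K_k,\cdot)$—yields $t(K_k,W)\geq g_k(\rho-\e/5)\geq g_k(\rho)-\e/4$, invoking the continuity of $g_k$ established by Reiher. Translating back, $\sum_S\prod_{ij}w_{ij}\geq(g_k(\rho)-\e/3)\binom{t}{k}$, and substituting this together with $(n/t)^k\binom{t}{k}\geq(1-o(1))\binom{n}{k}$ into the counting lemma output gives the required lower bound of $(g_k(\rho)-\e)p^{\binom{k}{2}}\binom{n}{k}$ copies of $K_k$ in $G'$. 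The main non-routine ingredient is the graphon form of the clique density theorem (together with the fact that the counting lemma of Theorem~\ref{thm:main}(ii) really does extend to varying pair densities); the rest is standard bookkeeping within the sparse regularity method.
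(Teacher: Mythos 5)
Your proof follows essentially the same route as the paper: apply sparse regularity, form a weighted reduced graph, lower-bound its total weight, invoke a weighted form of Reiher's clique density theorem, and count cliques between $k$-tuples of parts via the varying-density version of Theorem~\ref{thm:main}(ii) (the paper states this explicitly as Proposition~\ref{prop:main}). The only stylistic difference is that you pass to the graphon form of the clique density theorem, whereas the paper derives the weighted statement by randomly rounding the reduced graph to a $\{0,1\}$-valued graph and applying the original theorem; both are standard and equivalent.
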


We remark that, as $m_2(K_k)=(k+1)/2$, the assumption on $p$ in the above theorem is best possible up to the value of the constant $C$. Using part (ii) of Theorem~\ref{thm:main}, we may show that a similar theorem also holds for any strictly balanced graph $H$. That is, if $g_H(\rho)$ is the natural analogue of $g_k(\rho)$ defined for $H$, then for any $\e > 0$, there exists a $C > 0$ such that if $p \geq C n^{-1/m_2(H)}$, the random graph $G_{n,p}$ will a.a.s.\ be such that any subgraph $G'$ of $G_{n,p}$ contains at least $(g_H(\rho) - \e) p^{e(H)} \frac{n^{v(H)}}{Aut(H)}$ copies of $H$, where again $\rho$ is the relative density of $G'$ in $G_{n,p}$. However, the function $g_H(\rho)$ is only well understood for cliques and certain classes of bipartite graph (see, for example, \cite{CFS10, LSz12}).

\subsection{The Hajnal-Szemer\'edi theorem}

Let $H$ be a fixed graph on $h$ vertices. An arbitrary collection of vertex-disjoint copies of $H$ in some larger graph is called an $H$-\emph{packing}. A \emph{perfect $H$-packing} (or $H$-\emph{factor}) is an $H$-packing that covers all vertices of the host graph. It has long been known for certain graphs $H$ that if the minimum degree of an $n$-vertex graph $G$ is sufficiently large and $n$ is divisible by $h$, then $G$ contains an $H$-factor. For example, Dirac's theorem~\cite{D52} implies that if $H$ is a path of length $h-1$, $n$ is divisible by $h$, and $\delta(G) \geq n/2$, then $G$ contains an $H$-factor. Corr{\'a}di and Hajnal~\cite{CH63} proved that $\delta(G) \geq 2n/3$ implies the existence of a $K_3$-factor in $G$. A milestone in this area of research, the famous theorem of Hajnal and Szemer{\'e}di~\cite{HS70}, states that the condition $\delta(G) \geq (1-\frac{1}{k})n$ is sufficient to guarantee a perfect $K_k$-packing in $G$ for an arbitrary $k$ (see \cite{KiKo08} for a short proof of this theorem).

\begin{theorem} \label{thm:hajnalszemeredi}
  For any $k \geq 3$, every $n$-vertex graph $G$ with $\delta(G) \geq (1-\frac{1}{k})n$ contains a $K_k$-factor, provided that $n$ is divisible by $k$.
\end{theorem}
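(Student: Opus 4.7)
My plan is to recast Theorem~\ref{thm:hajnalszemeredi} as an equitable coloring problem in the complement and then follow the short inductive proof of Kierstead and Kostochka~\cite{KiKo08}. Write $H = \overline{G}$; the hypothesis $\delta(G) \geq (1 - 1/k)n$ translates to $\Delta(H) \leq r - 1$ with $r := n/k$, and a $K_k$-factor of $G$ is exactly a proper $r$-coloring of $H$ in which every color class has size $k$. So the goal becomes: every graph $H$ on $n = rk$ vertices with $\Delta(H) \leq r - 1$ admits an equitable proper $r$-coloring.

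I would proceed by induction on $n$, the base case $n = k$ being trivial. For the inductive step, delete an arbitrary edge $e = xy$ of $H$ and apply the inductive hypothesis to the smaller graph $H - e$ (which still satisfies the degree bound) to obtain an equitable proper $r$-coloring $(C_1, \dots, C_r)$ of $H - e$. If $x$ and $y$ lie in different classes, this is already a valid equitable coloring of $H$ and we are done. Otherwise, $x, y \in C_0$ for some class $C_0$; move $y$ into some other class $C_*$ where $y$ has no neighbor, thereby producing a near-equitable ``almost coloring'' with $|C_0| = k - 1$, $|C_*| = k + 1$, all other classes of size $k$, and exactly one surplus vertex (namely $y$). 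The remaining task is to rebalance this configuration into an equitable proper coloring of~$H$.

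To perform the rebalancing, I construct an auxiliary directed graph $D$ whose vertices are the color classes, with an arc $C \to C'$ whenever some vertex $v \in C$ has no neighbor in $C'$ (so transferring $v$ from $C$ to $C'$ preserves properness). A directed path from $C_*$ to $C_0$ in $D$ yields a chain of legal swaps along which the surplus can be shifted, restoring a balanced proper coloring. The main obstacle is showing such a path exists. The crucial observation is that if $\mathcal{A}$ denotes the set of classes reachable from $C_*$ in $D$ and $C_0 \notin \mathcal{A}$, then every vertex in every class of $\mathcal{A}$ must have at least one neighbor in every class outside $\mathcal{A}$; summing these constraints and double-counting the edges across the partition $(\bigcup \mathcal{A}, V(H) \setminus \bigcup \mathcal{A})$ forces some vertex to have degree exceeding $r - 1$, contradicting the hypothesis. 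Carrying out this discharging argument cleanly, and handling the corner case where the initial move of $y$ into $C_*$ is itself obstructed (which requires a second, nested swap chain), is the technical heart of the proof; the divisibility $k \mid n$ is used to guarantee the inductive hypothesis is available and to maintain equitability at every step.
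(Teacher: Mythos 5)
Theorem~\ref{thm:hajnalszemeredi} is not proved in this paper: it is the classical Hajnal--Szemer\'edi theorem, stated as a known result with a citation to~\cite{HS70} and a pointer to the Kierstead--Kostochka short proof~\cite{KiKo08}. There is therefore no internal argument for you to match; your proposal is a free-standing attempt to reproduce~\cite{KiKo08} and must be judged on its own.

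The reformulation as an equitable $r$-coloring of $H = \overline{G}$ is correct, and the general scheme --- induct on the number of \emph{edges} of $H$ (your ``induction on $n$'' is a slip, since deleting an edge leaves $n$ fixed), pass to a near-equitable coloring with classes of sizes $k-1, k+1, k, \dots, k$, then rebalance along a path in an auxiliary digraph of legal vertex transfers --- is indeed the Kierstead--Kostochka plan. But the central lemma you invoke is false as stated. You assert that if $\mathcal{A}$ is the set of classes reachable from $C_*$ and $C_0 \notin \mathcal{A}$, then double-counting the edges across $\bigl(\bigcup\mathcal{A},\, V(H)\setminus\bigcup\mathcal{A}\bigr)$ already forces some vertex to have degree exceeding $r-1$. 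Writing $a = |\mathcal{A}|$ and $b = r - a$, the lower bound your constraints yield on the crossing edges is $(ak+1)b$, whereas the degree hypothesis applied to the outside gives the upper bound $(bk-1)(r-1)$. For $b=1$ this is contradictory as you hope, but already for $k=3$, $r=4$, $a=b=2$ the two numbers are $14$ and $15$: no contradiction. This is precisely why the Hajnal--Szemer\'edi theorem has no two-line proof and why Kierstead and Kostochka need their considerably more delicate structure (a partition of the classes into types, ``solo'' vertices, and a nested swap inside the rebalancing step). You flag this as the ``technical heart'' and leave it out, but it is not a matter of cleaning up details --- the step as you state it fails, so the proposal has a genuine gap.
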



This theorem has also been generalized to arbitrary $H$ \cite{AY96, KSS01, KO092}. In particular, a result of Koml\'os \cite{Ko00} shows that a parameter known as the critical chromatic number governs the existence of almost perfect packings (i.e., packings covering all but a $o(1)$-fraction of the vertices of the host graph) in graphs of large minimum degree. 

We will prove the following approximate version of the Hajnal-Szemer{\'e}di Theorem in the random graph $G_{n,p}$.

\begin{theorem} \label{thm:sparsehajnalszemeredi}
  For any $k \geq 3$ and any $\gamma > 0$, there exists a constant $C > 0$ such that if $p \geq Cn^{-2/(k+1)}$, then the following holds a.a.s.\ in $G_{n,p}$. Any subgraph $G'$ of $G_{n,p}$ with $\delta(G') \ge (1-\frac{1}{k}+\gamma)pn$ contains a $K_k$-packing that covers all but at most $\gamma n$ vertices.
\end{theorem}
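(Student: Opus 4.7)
The plan is to follow the classical regularity-based strategy for approximate $K_k$-packing, with the sparse regularity lemma (Theorem~\ref{thm:sparsereg}) replacing Szemer\'edi's and the counting lemma Theorem~\ref{thm:main}(i) serving as the embedding tool. Given $k \geq 3$ and $\gamma>0$, I would fix constants in the order $D := 1 + \gamma/100$ and $d := \gamma/100$; feed $H=K_k$ (with any fixed $\delta$) and $d$ into Theorem~\ref{thm:main} to obtain $\epsilon_0,\xi>0$; feed $\epsilon := \min(\epsilon_0,\gamma^2/100)$, $D$, and $t_0 := \lceil 4k/\gamma\rceil$ into Theorem~\ref{thm:sparsereg} to obtain $\eta_0,T$; set $\eta := \min(\eta_0,\gamma/(10kT))$; and invoke Theorem~\ref{thm:main} with this $\eta$ to obtain $C$. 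In the regime $p \geq Cn^{-2/(k+1)}$, Chernoff bounds show $G_{n,p}$ is a.a.s.\ $(\eta,p,D)$-upper-uniform, and by Theorem~\ref{thm:main}(i) it a.a.s.\ satisfies the counting conclusion uniformly. Both properties pass to any subgraph $G' \subseteq G_{n,p}$.

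Apply Theorem~\ref{thm:sparsereg} to $G'$ to obtain an $(\epsilon,p)$-regular partition $V_1,\dots,V_t$ with $|V_i|=n/t$ and $t_0 \leq t \leq T$. Call $V_i$ \emph{typical} if it lies in at most $\sqrt{\epsilon}\,t$ irregular pairs; at least $(1-2\sqrt{\epsilon})t$ parts are typical. Form the reduced graph $R$ on the typical parts, joining $V_i\sim V_j$ when $(V_i,V_j)$ is $(\epsilon,p)$-regular of density at least $dp$. A double-count of the edges leaving $V_i$ --- bounded below by $\delta(G')|V_i| - O(Dp(n/t)^2)$ and above by $Dp(n/t)^2$ per $R$-neighbour plus $dp(n/t)^2$ per non-$R$-edge (using upper-uniformity, valid because $n/t\geq\eta_0 n$) --- together with the smallness of $D-1$, $d$, and $\sqrt{\epsilon}$ relative to $\gamma$, forces $\delta(R) \geq \bigl(1 - \tfrac{1}{k} + \tfrac{\gamma}{3}\bigr)|V(R)|$.

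By Theorem~\ref{thm:hajnalszemeredi} --- or rather its standard consequence that minimum degree strictly above $(1-\tfrac{1}{k})|V(R)|$ yields a $K_k$-packing omitting fewer than $k$ vertices --- $R$ contains a collection $\mathcal{F}$ of vertex-disjoint copies of $K_k$ covering all but fewer than $k$ parts. For each $F = \{V_{i_1},\dots,V_{i_k}\}\in\mathcal{F}$ I would extract copies of $K_k$ from the corresponding blown-up $K_k$ greedily. Passing by a standard random edge subsampling (which preserves $(\epsilon',p)$-regularity for $\epsilon'$ close to $\epsilon$) to a subgraph with exactly $m := \lfloor dp(n/t)^2\rfloor$ edges in each of the $\binom{k}{2}$ pairs, the resulting graph lies in $\G(K_k, n/t, m, p, \epsilon')$, so Theorem~\ref{thm:main}(i) produces at least $\xi d^{\binom{k}{2}}p^{\binom{k}{2}}(n/t)^k > 0$ canonical copies of $K_k$. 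Pick any one, delete its $k$ vertices, and iterate. As long as at least a $\gamma/(4k)$-fraction of each part survives, the restricted pairs remain $(\epsilon'',p)$-regular with density at least $(d-\epsilon)p$ (standard inheritance under subsets of relative size exceeding $\epsilon$), the residual part sizes exceed $\gamma n/(4kT) \geq \eta n$, and Theorem~\ref{thm:main}(i) continues to apply. This yields a $K_k$-packing inside $F$ leaving at most $\gamma n/(4kt)$ vertices per part uncovered. Summing --- at most $kn/t \leq \gamma n/4$ vertices in parts outside $\mathcal{F}$ (using $t \geq t_0 \geq 4k/\gamma$), at most $2\sqrt{\epsilon}\,n$ in atypical parts, and at most $\gamma n/(4k)$ leftover inside $\mathcal{F}$ --- bounds the uncovered set by $\gamma n$.

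The main technical hurdle is the minimum-degree transfer from $G'$ to $R$: the upper-uniformity slack $D>1$, the density cutoff $d$, and the irregular-pair exception each slightly erode $\delta(R)$, so the constants must be tuned to keep the cumulative loss below $\gamma/2$. The iterative extraction crucially relies on Theorem~\ref{thm:main}(i) holding \emph{uniformly} over all permissible part sizes, which is exactly its stated form; the choice $\eta \leq \gamma/(10kT)$ ensures that the iteration can run deep enough to exhaust all but a small fraction of each part.
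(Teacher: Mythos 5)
Your proposal follows essentially the same strategy as the paper: apply the sparse regularity lemma, pass to a reduced graph $R$ of dense regular pairs, transfer the minimum-degree hypothesis to $R$, invoke Hajnal--Szemer\'edi to find an almost-spanning $K_k$-factor in $R$, and then extract a $K_k$-packing from each blown-up clique by iteratively applying the embedding guaranteed by Theorem~\ref{thm:main}(i). The cosmetic differences (you discard ``atypical'' parts and subsample edges inline, whereas the paper iteratively deletes low-degree vertices of $R$ in Claim~\ref{claim:HS} and routes the varying pair densities through Proposition~\ref{prop:main}) do not change the argument. One small tuning point worth noting: for the regularity inheritance in your greedy extraction to feed back into Theorem~\ref{thm:main}(i), you need the restricted pairs $(W_i,W_j)$ with $|W_i|\geq (\gamma/4k)|V_i|$ to still be $(\epsilon_0,p)$-regular, which requires $\epsilon \lesssim (\gamma/k)\epsilon_0$; your choice $\epsilon = \min(\epsilon_0,\gamma^2/100)$ does not guarantee this, whereas the paper explicitly takes $\e = \min\{\HSX\e',\HSep/2\}$ with $\HSX \approx \gamma/2$. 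This is easily fixed by inserting the factor $\gamma/(4k)$ into your definition of $\epsilon$ and does not affect the soundness of the approach.
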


We remark that the assumption on $p$ in the above theorem is best possible up to the value of the constant $C$. Using the result of Koml{\'o}s~\cite{Ko00}, we may show that a similar theorem also holds for any graph $H$. That is, for any $H$ and any $\gamma > 0$, there exists a $C > 0$ such that if $p \geq Cn^{-1/m_2(H)}$, then a.a.s.\ every subgraph $G'$ of $G_{n,p}$ satisfying $\delta(G') \geq (1-\frac{1}{\chi_{\text{cr}}(H)}+\gamma)pn$, where $\chi_{\text{cr}}(H)$ is the critical chromatic number of $H$, contains an $H$-packing covering all but at most $\gamma n$ vertices.

Finally, we remark that the problem of finding perfect packings in subgraphs of random graphs seems to be much more difficult. On the positive side, a best possible sparse random analogue of Dirac's theorem was recently proved by Lee and Sudakov~\cite{LS12}. On the negative side, it was observed by Huang, Lee, and Sudakov~\cite{HLS12} that for every $\e > 0$, there are $c, C > 0$ such that if every vertex of $H$ is contained in a triangle and $C n^{-1/2} \leq p \leq c$, then $G_{n,p}$ a.a.s.\ contains a spanning subgraph $G'$ with $\delta(G') \geq (1-\e)pn$ such that at least $\e p^{-2}/3$ vertices of $G'$ are not contained in a copy of~$H$. Therefore, the presence of the set of uncovered vertices in the statement of Theorem~\ref{thm:sparsehajnalszemeredi} is indispensable. For further discussion and related results, we refer the reader to~\cite{BLS12,HLS12}.

\subsection{The Andr\'asfai-Erd\H{o}s-S\'os theorem}

A result of Zarankiewicz \cite{Z47} states that if a graph on $n$ vertices has minimum degree at least $(1 - \frac{1}{k-1}) n$ then it contains a copy of $K_k$. This result follows immediately from Tur\'an's theorem but is interesting because it has a surprisingly robust stability version, due to Andr\'asfai, Erd\H{o}s, and S\'os \cite{AES74}. This theorem states that any $K_k$-free graph on $n$ vertices with minimum degree at least $(1 - \frac{3}{3k-4}) n$ must be $(k-1)$-partite.

This result was extended to general graphs by Alon and Sudakov \cite{AS06} (see also \cite{Al10}), who showed that for any graph $H$ and any $\gamma > 0$, every $H$-free graph on $n$ vertices with minimum degree at least $\left(1 - \frac{3}{3\chi(H)-4} + \gamma\right) n$ may be made $(\chi(H)-1)$-partite by deleting $o(n^2)$ edges. We prove a random analogue of this result, as follows.

\begin{theorem} \label{thm:aes}
For every graph $H$ and every $\gamma > 0$, there exists $C > 0$ such that if $p \geq C n^{-1/m_2(H)}$, then a.a.s.\ every $H$-free subgraph $G' \subseteq G_{n,p}$ with $\delta(G') \geq \left(1-\frac{3}{3\chi(H)-4} + \gamma\right)p n$ may be made $(\chi(H)-1)$-partite by removing from it at most $\gamma p n^2$ edges.
\end{theorem}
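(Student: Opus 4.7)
My plan is the standard sparse regularity plus transference strategy. Fix $c := 3/(3\chi(H)-4)$ and let $\e, \xi > 0$ be the constants provided by Theorem~\ref{thm:main}(i) applied to $H$ with some small constants $\delta, d > 0$ to be chosen later. First apply the sparse regularity lemma (Theorem~\ref{thm:sparsereg}) to $G'$, with regularity parameter $\e' \leq \e$ and lower bound $t_0$ large in terms of $\gamma, H, d, \e'$, taking advantage of the fact that $G_{n,p}$, and hence $G'$, is a.a.s.\ $(\eta_0, p, 2)$-upper-uniform for every constant $\eta_0 > 0$. This produces an equipartition $V_1, \ldots, V_t$ of $V(G')$ into $t_0 \leq t \leq T$ parts. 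Form the reduced graph $R$ on $[t]$ by joining $i \sim j$ whenever $(V_i, V_j)$ is $(\e', p)$-regular and the density of $G'$-edges between $V_i$ and $V_j$ is at least $dp$.

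The graph $R$ is $H$-free: if $i_1\cdots i_k$ induced a copy of $H$ in $R$, the subgraph of $G'$ spanned by the corresponding parts would lie in $\G(H, n/t, m, p, \e)$ for some $m \geq d p (n/t)^2$, so Theorem~\ref{thm:main}(i) would produce at least $\xi d^{e(H)} (n/t)^{v(H)} \geq 1$ canonical copies of $H$, contradicting $H$-freeness. To transfer the minimum-degree hypothesis, let $I \subseteq [t]$ be the set of indices $i$ such that $(V_i, V_j)$ is $(\e', p)$-regular for all but at most $\sqrt{\e'}\,t$ values of $j$; by the regularity of the partition, $|I| \geq (1-\sqrt{\e'})t$. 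For $i \in I$, summing the bound $\deg_{G'}(v) \geq (1 - c + \gamma) pn$ over $v \in V_i$, splitting the edge count by $R$-neighbours, non-$R$-neighbours (each pair contributing $< dp(n/t)^2$), and the at most $\sqrt{\e'}t$ non-regular partners, and using the a.a.s.\ $G_{n,p}$-bound $e_{G_{n,p}}(V_i, V_j) \leq (1+o(1))\,p\,(n/t)^2$, a short calculation yields
\[
d_R(i) \,\geq\, \frac{(1-c+\gamma) - d - \sqrt{\e'}}{1+o(1)}\, t \,\geq\, (1 - c + \gamma/2)\,t
\]
once $d$ and $\e'$ are chosen small enough. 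Hence $R' := R[I]$ is an $H$-free graph on at least $(1-\sqrt{\e'})t$ vertices with $\delta(R') \geq (1 - c + \gamma/3)|V(R')|$.

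The Alon--Sudakov extension of the Andr\'asfai--Erd\H{o}s--S\'os theorem applied to $R'$ now yields a partition of $V(R')$ into $\chi(H)-1$ classes using all but at most $\alpha t^2$ edges of $R'$, where $\alpha$ can be made as small as we wish by choosing $t_0$ large. Lift this to a partition of $V(G')$ by assigning each vertex of $V_i$ with $i \in I$ to the class containing $i$ and placing the remaining vertices arbitrarily. The edges of $G'$ inside a single class come from the following sources: edges inside the parts $V_i$; edges lying in non-$R$-pairs; edges in the $\leq \sqrt{\e'}t$ non-regular pairs incident to each $i \in I$; the $\alpha t^2$ pairs of $R'$ deleted by Alon--Sudakov; and edges incident to parts $V_i$ with $i \notin I$. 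Using the $G_{n,p}$ upper-uniformity to bound each source in terms of $pn^2$, the total is at most $\gamma pn^2$ once $1/t_0, d, \e', \alpha$ are chosen sufficiently small in terms of $\gamma$ and $H$. The main obstacle is the transfer of the minimum-degree condition, namely the displayed inequality above: one must trade off $d$, $\e'$, and the $G_{n,p}$ concentration error against the slack $\gamma/2$ and throw away the $\sqrt{\e'}t$ parts in $[t] \setminus I$, after which the remaining work is a standard bookkeeping of constants across Theorem~\ref{thm:main}, the sparse regularity lemma, and the dense Alon--Sudakov theorem.
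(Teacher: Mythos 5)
Your proof is correct and follows essentially the same route as the paper: sparse regularity with a cleanup step, a cluster graph $R$ whose $H$-freeness is enforced by the K\L R counting lemma, a transfer of the minimum-degree condition to a large subgraph of $R$ (the paper does this by greedy vertex deletion, you by restricting to the set $I$ of ``mostly regular'' indices --- a cosmetic difference), an application of the dense Alon--Sudakov theorem, and a final edge count. The one small fix needed is that the regular pairs produced by the sparse regularity lemma generally have unequal edge counts, so when showing $R$ is $H$-free you should invoke Proposition~\ref{prop:main}, which allows a vector $\bm=(m_{ij})$ of densities, rather than Theorem~\ref{thm:main}(i) directly; this is precisely why the paper proves Proposition~\ref{prop:main} as an intermediate step.
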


More generally, for any $H$ and any $r$, we may define
\[\delta_{\chi}(H, r) := \inf\{d \colon \delta(G) \geq d|G| \mbox{ and } H \not\subseteq G \Rightarrow \chi(G) \leq r\}.\]
The Andr\'asfai-Erd\H{o}s-S\'os theorem determines the value of $\delta_{\chi}(K_k, k-1)$, but there are also some results known for other values of $r$. For example, it is known \cite{BT12, H82, J95} that $\delta_{\chi}(K_3, 3) = \frac{10}{29}$ and $\delta_{\chi}(K_3,4) = \frac{1}{3}$. Our methods easily allow us to transfer any such results about $\delta_{\chi}(H,r)$ to the random setting. These are approximate results, proving that any $H$-free graph with a certain minimum degree is close to a graph with bounded chromatic number. As noted in \cite{ABGKM12}, one cannot hope to achieve a more exact result saying that the graph itself has bounded chromatic number.

\section{K\L R conjecture via multiple exposure}

\label{sec:proof-part-a}

In this section, we prove part~(i) of Theorem~\ref{thm:main}. The proof follows the ideas of~\cite{S12, Sj12}. The main idea in these proofs is to expose the random graph in multiple rounds. In this context, this can be traced back to the work of R\"odl and Ruci\'nski in~\cite{RR95}. 

We are going to prove a somewhat stronger statement, Theorem~\ref{thm:part-i} below, which easily implies part~(i) of Theorem~\ref{thm:main}. A bipartite graph between sets $U$ and $V$ is $(\e, d)$-lower-regular if, for every $U' \subseteq U$ and $V' \subseteq V$ with $|U'| \geq \e |U|$ and $|V'| \geq \e |V|$, the density $d(U',V')$ of edges between $U'$ and $V'$ satisfies $d(U',V') \geq d$. Given a graph $H$ on the vertex set $\{1, \ldots, k\}$, we denote by $\mathcal{G}_\ell(H, n,d,\e)$ the collection of all graphs $G$ on the vertex set $V_1 \cup \ldots \cup V_k$, where $V_1, \ldots, V_k$ are pairwise disjoint sets of size $n$ each, whose edge set consists of $e(H)$ different $(\e,d)$-lower-regular bipartite graphs, one graph between $V_i$ and $V_j$ for each $ij \in E(H)$. For an arbitrary graph $G$ and $p \in [0,1]$, we will denote by $G_p$ the random subgraph of $G$, where each edge of $G$ is included with probability $p$, independently of all other edges. Finally, given $H' \subseteq H$ and two graphs $G, G' \in \mathcal{G}_\ell(H, n,0,1)$, we denote by $\C(H,G;H',G')$ the set of all canonical copies of $H$ in $G$ such that the edges of $H'$ in these copies are in $G'$.

\begin{theorem}
  \label{thm:part-i}
  Let $H$ be an arbitrary graph. For every $H' \subseteq H$ and every $d > 0$, there exist $\e, \xi, b, C > 0$ and an integer $n_0$ such that if $n \geq n_0$ and $p \geq Cn^{-1/m_2(H)}$, then the following holds. For every $G \in \mathcal{G}_\ell(H,n,d,\e)$, with probability at least $1 - \exp(-bpn^2)$, the random graph $G_p$ has the following property: Every subgraph $G'$ of $G_p$ in $\mathcal{G}_\ell(H,n,dp,\e)$ satisfies 
\[|\C(H,G;H',G')| \geq \xi p^{e(H')}n^{v(H)}.\]
\end{theorem}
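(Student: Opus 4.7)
The proof proceeds by induction on $e(H')$. The base case $e(H') = 0$ is the classical deterministic counting lemma for $(\e, d)$-lower-regular $k$-partite tuples applied directly to $G$: there exist $\e_0 = \e_0(H, d) > 0$ and $\xi_0 = \xi_0(H, d) > 0$ such that every $G \in \mathcal{G}_\ell(H, n, d, \e_0)$ contains at least $\xi_0 n^{v(H)}$ canonical copies of $H$. Since $|\C(H, G; \emptyset, G')| = G(H) \geq \xi_0 n^{v(H)}$ holds for every $G'$, the statement follows with arbitrary $b, C$.

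For the inductive step, pick an edge $e = ij \in E(H')$ and set $H'' := H' \setminus \{e\}$. Applying the inductive hypothesis to $(H, H'', d)$ produces constants $\e'', \xi'', b'', C''$ such that, for $p \geq C'' n^{-1/m_2(H)}$, the following holds with probability at least $1 - \exp(-b'' p n^2)$: every $G' \subseteq G_p$ in $\mathcal{G}_\ell(H, n, dp, \e'')$ satisfies $|\C(H, G; H'', G')| \geq \xi'' p^{e(H'')} n^{v(H)}$. We will take $\e \le \e''$, so this applies to any $G'$ from the statement we want to prove. For such a $G'$, define
\[
\Phi_{G'}(u, w) := \bigl| \{ (v_1, \ldots, v_k) \in \C(H, G; H'', G') : v_i = u,\, v_j = w \} \bigr|
\]
for $(u, w) \in V_i \times V_j$, and write $G'_e$ for the bipartite piece of $G'$ between $V_i$ and $V_j$ and $G_{ij}$ for the corresponding piece of $G$. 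Then
\[
\sum_{uw \in G_{ij}} \Phi_{G'}(u, w) \;=\; |\C(H, G; H'', G')|
\qquad\text{and}\qquad
\sum_{uw \in G'_e} \Phi_{G'}(u, w) \;=\; |\C(H, G; H', G')|.
\]
The task therefore reduces to showing that, with the required probability, every admissible $G'$ has its $G'_e$-piece capturing an $\Omega(p)$-fraction of the $\Phi_{G'}$-mass on $G_{ij}$.

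To enforce this I would use a multi-round exposure in the style of Schacht and Samotij. Split $G_p$ into two independent parts: the first part $G_p^{(1)}$ consists of the edges of $G_p$ that lie outside $V_i \times V_j$, and the second part $G_p^{(2)}$ consists of those that lie inside $V_i \times V_j$. Conditional on $G_p^{(1)}$, the function $\Phi_{G'}$ depends only on the adversary's choice of $G'_{-e} := G' \setminus G'_e \subseteq G_p^{(1)}$, while the candidate bipartite graphs $G'_e$ live inside the second-round random set $G_p^{(2)} \subseteq G_{ij}$. The goal is then a tail estimate: with probability at least $1 - \exp(-b p n^2)$ over $G_p^{(2)}$, every $(\e, dp)$-lower-regular bipartite $A \subseteq G_p^{(2)}$ captures at least a $(\xi/\xi'')p$-fraction of $\sum_{uw \in G_{ij}} \Phi_{G'}(u, w)$, simultaneously for every admissible outside configuration $G'_{-e}$.

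The main obstacle is precisely this tail estimate. The function $\Phi_{G'}$ can be as large as $n^{v(H) - 2}$ pointwise while having average only $\Theta(p^{e(H'')} n^{v(H) - 2})$, so a naive Chernoff bound on $\sum_{uw \in A} \Phi_{G'}(u, w)$ is too weak to beat $\exp(-bpn^2)$. The workaround is to stratify $G_{ij}$ into dyadic level sets of $\Phi_{G'}$: non-negligible level sets correspond to dense bipartite subgraphs of $G_{ij}$ on which one can re-apply the inductive hypothesis (after restricting $G$ to the relevant subsets of the remaining vertex classes that participate in the definition of $\Phi_{G'}$), and this, together with the lower-regularity of $A$, forces $A$ to catch a proportional share of each such level; negligible ones contribute negligibly to the total mass by definition. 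Summing over the logarithmically-many levels and union-bounding over the choice of $e \in E(H')$ and over the (at most $\exp(O(pn^2))$) admissible outside configurations---the latter cost being absorbable into the tail bound by making $b$ small enough relative to $b''$---yields the induction step and completes the proof.
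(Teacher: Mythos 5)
Your overall blueprint---induction on $e(H')$, the deterministic counting lemma as base case, removing one edge $ij$ and trying to show that an $(\e,dp)$-lower-regular $A\subseteq G_p$ over the pair $(V_i,V_j)$ must capture a proportional share of the $\deg_{H''}$-mass---matches the skeleton of the paper's argument (Section~\ref{sec:proof-part-a}). You have also correctly put your finger on the central obstacle: the degree function $\Phi_{G'}$ can be pointwise of order $n^{v(H)-2}$ while its average is only $\Theta(p^{e(H'')}n^{v(H)-2})$, so a na\"ive Chernoff bound does not reach the required $\exp(-bpn^2)$ error.

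The gap is in how you propose to deal with this. You split $G_p$ into the part $G_p^{(1)}$ outside $V_i\times V_j$ and the part $G_p^{(2)}$ inside, and then plan to union-bound over all ``admissible outside configurations'' $G'_{-e}\subseteq G_p^{(1)}$. But a.a.s.\ $G_p^{(1)}$ has $\Theta(pn^2)$ edges, so the number of its subgraphs is $\exp(\Theta(pn^2))$ with a constant of order $e(H)$. To beat this you would need the per-configuration failure probability to be $\exp(-c\,pn^2)$ with $c$ strictly \emph{larger} than that constant, whereas your remark ``by making $b$ small enough relative to $b''$'' points in the wrong direction: decreasing $b$ does not help absorb the union bound, and there is no mechanism in a single split to make the per-configuration tail constant dominate the entropy of $G_p^{(1)}$. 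The dyadic-level-set idea is also not fleshed out: lower-regularity of $A$ ensures $A$ is dense between every pair of $\e n$-sets, but it does not force $A$ to intersect a given dense subgraph $Z\subseteq G_{ij}$ (e.g.\ $A$ and $Z$ could live in disjoint ``blocks''), so ``$A$ catches a proportional share of each non-negligible level'' is not a consequence of lower-regularity alone.

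The paper's proof resolves this with two further ideas that are missing from your proposal. First, rather than a single inside/outside split, it exposes $G_p$ as $G_{p_1}\cup\dots\cup G_{p_R}$ with $p_{s+1}=Lp_s$ and $\sum_s p_s\approx p$ (Section~\ref{sec:mult-expos-trick}). In round $s$ the adversarial subgraph $G'(s-1)$ lives in $G(s-1)$, which a.a.s.\ has only $O(p_{s-1}n^2)=O(p_sn^2/L)$ edges, so the union-bound cost $2^{O(p_sn^2/L)}$ can be made negligible against the round-$s$ tail $\exp(-b^*p_sn^2)$ simply by taking $L$ large---this is exactly what a one-shot split cannot do. Second, each round is analysed via the ``grow $Z$ or win'' dichotomy (Claim~\ref{claim:part-i-main}): either $G_s'$ already meets the accumulated set $Z(s-1)$ of rich edges in $\Omega(\e^2 d\,p_s n^2)$ edges (Case 1, when the poor edges are not lower-regular---this is where lower-regularity of $G_s'$ is actually used, against the complement of $Z(s-1)$ rather than against $Z(s-1)$ itself), or one applies the inductive hypothesis to $G\setminus Z(s-1)$ together with the $L^2$ degree bound of Lemma~\ref{lemma:part-i-upper-tail} and Cauchy--Schwarz to produce $\Omega((\xi')^2 n^2/K)$ \emph{new} rich edges (Case 2). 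Since a bipartite graph on $V_i\times V_j$ has at most $n^2$ edges, Case 2 can occur at most $R-1$ times, so after $R$ rounds Case 1 must apply and produce the required $\xi p^{e(H')}n^{v(H)}$ copies. Without both the geometric multi-round exposure and this dichotomy, your union bound cannot be closed.
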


We now deduce the first part of Theorem~\ref{thm:main} from Theorem~\ref{thm:part-i}.

\begin{proof}[Proof of part~(i) of Theorem~\ref{thm:main}]
  Let $H$ be an arbitrary $k$-vertex graph and let $d > 0$. We may assume that $H$ contains a vertex of degree at least two (and hence $m_2(H) \geq 1$) since otherwise the assertion of the theorem is trivial. Let
  \[
  \e = \min\big\{ d/2, \e_{\ref{thm:part-i}}(H, d/2) \big\}, \quad b' = \min\big\{1/40, b_{\ref{thm:part-i}}(H,d/2)\big\}, \quad \text{and} \quad C' = C_{\ref{thm:part-i}}(H,d/2).
  \]
  Moreover, let $\xi' = \xi_{\ref{thm:part-i}}(H,d/2)$ and let $\xi = 2^{-e(H)}\xi'$. Next, fix an arbitrary positive constant $\eta$ and let $C = \max\{ C', 40 k \} \cdot \eta^{-2}$. Assume that $p \ge CN^{-1/m_2(H)}$ and that $N$ is sufficiently large. Finally, let $n \geq \eta N$ and let $m \geq dpn^2$. We estimate the probability of the event (we denote it by $\B$) that $G_{N,p}$ contains a subgraph $G \in \mathcal{G}(H,n,m,p,\e)$ with $G(H) < \xi \left(\frac{m}{n^2}\right)^{e(H)} n^{v(H)}$.

  Let $W_1, \ldots, W_k$ be pairwise disjoint subsets of $V(G_{N,p})$, each of size $n$, and let $\B(W_1, \ldots, W_k)$ denote the event that $G_{N,p}$ contains a subgraph $G$ as above with sets $W_1, \ldots, W_k$ playing the role of $V_1, \ldots, V_k$ from the definition of $\mathcal{G}(H,n,m,p,\e)$. By Chernoff's inequality (see below),
  \[
  \Pr\left(e_{G_{N,p}}(W_i, W_j) \geq 2pn^2 \text{ for some $ij \in E(H)$}\right) \leq e(H) \cdot \exp(-pn^2/16) \leq \exp(-pn^2/20).
  \]
  On the other hand, if $m \leq 2pn^2$, then by Theorem~\ref{thm:part-i} with $H = H'$, $d_{\ref{thm:part-i}} = d/2$ and $G_{\ref{thm:part-i}} = H(W_1, \ldots, W_k)$, the complete blow-up of $H$ obtained by replacing the vertices of $H$ by sets $W_1, \ldots, W_k$ and the edges by complete bipartite graphs, the probability that $G_{N,p} \cap H(W_1, \ldots, W_k)$ contains a subgraph $G \in \mathcal{G}(H,n,m,p,\e) \subseteq \mathcal{G}_{\ell}(H,n,pd/2,\e)$ satisfying
  \[
  G(H) < \xi \left(\frac{m}{n^2}\right)^{e(H)} n^{v(H)} \leq 2^{e(H)}\xi p^{e(H)} n^{v(H)} = \xi' p^{e(H)} n^{v(H)}
  \]
  is at most $\exp(-bpn^2)$. To see this, note that $H(W_1, \ldots, W_k)_p$ has the same distribution as $G_{N,p} \cap H(W_1, \ldots, W_k)$ and that our assumptions imply that $p \geq C N^{-1/m_2(H)} \geq C' n^{-1/m_2(H)}$. It follows that
  \[
  \Pr(\B) \leq \sum_{W_1, \ldots, W_k} \Pr\big(\B(W_1, \ldots, W_k)\big) \leq (k+1)^N \big[\exp(-pn^2/20) + \exp(-bpn^2)\big] \leq \exp(-b'\eta^2pN^2/2)
  \]
  since $pn^2 \geq \eta^2pN^2 \geq C\eta^2N^{2-1/m_2(H)} \geq 40kN$.
\end{proof}

Note that we used Chernoff's bound in the following standard form (see, for example,~\cite[Appendix~A]{AS08}).

\begin{lemma}[Chernoff's inequality]
Let $t$ be a positive integer, $p \in [0,1]$, and $X \sim Bin(t,p)$. For every positive $a$, 
\[\Pr(X < pt - a) < \exp\left(-\frac{a^2}{pt}\right) \quad \text{and} \quad \Pr(X > pt + a) < \exp\left(-\frac{a^2}{2pt} + \frac{a^3}{2(pt)^2} \right).\]
\end{lemma}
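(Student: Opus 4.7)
The plan is to apply the classical Cram\'er--Chernoff (exponential moment) method. Writing $X = \sum_{i=1}^t X_i$ as a sum of independent Bernoulli$(p)$ indicators, independence factorizes the moment-generating function:
\[
\EE[e^{\lambda X}] = \prod_{i=1}^{t} \EE[e^{\lambda X_i}] = (1 - p + p e^{\lambda})^t
\]
for every $\lambda \in \RR$. Combined with the elementary inequality $1 + u \leq e^u$, this yields the clean estimate $\EE[e^{\lambda X}] \leq \exp(pt(e^{\lambda} - 1))$, which serves as the workhorse for both tails.

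For the upper tail, I would apply Markov's inequality to $e^{\lambda X}$ with $\lambda > 0$ to be optimized later:
\[
\Pr(X > pt + a) \leq e^{-\lambda(pt+a)} \EE[e^{\lambda X}] \leq \exp\bigl( pt(e^{\lambda} - 1) - \lambda(pt + a) \bigr).
\]
Minimizing the exponent over $\lambda > 0$ gives $e^{\lambda} = 1 + a/(pt)$; setting $y = a/(pt)$, this produces the bound $\exp(-pt \cdot h(y))$ with $h(y) = (1+y)\log(1+y) - y$. The remaining analytic step is to show $h(y) \geq y^2/2 - y^3/6$ for $y \geq 0$ (either via the alternating-series expansion $h(y) = \sum_{k \geq 2}(-1)^k y^k / (k(k-1))$ together with a sign check, or by noting that the second derivative of the difference is nonnegative). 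The trivial bound $y^3/6 \leq y^3/2$ then delivers the stated upper-tail estimate $\exp(-a^2/(2pt) + a^3/(2(pt)^2))$.

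For the lower tail, I would repeat the argument with $e^{-\mu X}$ in place of $e^{\lambda X}$ for $\mu > 0$; the optimum falls at $\mu = -\log(1 - a/(pt))$ and yields $\Pr(X < pt - a) \leq \exp(-pt \cdot g(x))$, where $x = a/(pt)$ and $g(x) = (1-x)\log(1-x) + x$. Crucially, the Taylor series $g(x) = \sum_{k \geq 2} x^k/(k(k-1))$ has \emph{nonnegative} terms, so $g(x) \geq x^2/2$ is immediate, giving the standard lower-tail bound $\exp(-a^2/(2pt))$, which in particular implies the stated form. The main obstacle is simply organizing the one-parameter optimization cleanly and verifying the elementary inequalities for $(1 \pm y)\log(1 \pm y)$; once these are in place the proof is routine, matching the treatment in Appendix~A of Alon--Spencer cited in the statement.
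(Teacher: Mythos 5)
Your exponential-moment argument is the standard one, and the paper itself offers no proof to compare it with: the lemma is quoted verbatim from the literature (``see, for example, [AS08, Appendix~A]''), so the only question is whether your write-up actually establishes the statement as printed. The upper-tail half does: the bound $\EE[e^{\lambda X}]\le\exp(pt(e^{\lambda}-1))$, the optimization $e^{\lambda}=1+a/(pt)$, and the inequality $(1+y)\log(1+y)-y\ge y^2/2-y^3/6$ (your second-derivative argument, with the difference vanishing to first order at $0$ and having second derivative $y^2/(1+y)\ge 0$, works for all $y\ge 0$; the alternating-series argument alone only covers $y\le 1$) give exactly $\exp\bigl(-\tfrac{a^2}{2pt}+\tfrac{a^3}{6(pt)^2}\bigr)$, which is at most the stated upper-tail bound.

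The lower tail is where there is a genuine gap. You correctly derive $\Pr(X<pt-a)\le\exp(-a^2/(2pt))$, but your closing claim that this ``in particular implies the stated form'' is backwards: since $\exp(-a^2/(2pt))\ge\exp(-a^2/(pt))$, your bound is \emph{weaker} than the printed one, not stronger. Moreover, no repair is possible, because the printed lower-tail bound is false in general: take $p\to 0$ and $pt\to\infty$ with $a=2\sqrt{pt}$, so that $a/\sqrt{pt(1-p)}\to 2$; by the central limit theorem $\Pr(X<pt-a)\to\Phi(-2)\approx 0.023$, which exceeds the claimed $e^{-a^2/(pt)}=e^{-4}\approx 0.018$. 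The statement is evidently a misprint of the Alon--Spencer form $\exp(-a^2/(2pt))$ (Theorem~A.1.13 there), which is precisely what you prove and is all the paper ever uses -- its lower-tail applications take $a$ proportional to the mean (e.g.\ $a=\mu/2$, where your bound gives $\exp(-\mu/8)$, comfortably stronger than the $\exp(-\e^2dp_sn^2/16)$-type estimates invoked in the proof of Claim~\ref{claim:star}). So your proof is a correct proof of the intended lemma, but as a proof of the literal statement it fails at that final implication, and you should flag the missing factor of $2$ rather than paper over it.
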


In particular, if we apply the upper tail estimate with $a = pt/2$, we see that 
\[\Pr(X > 2pt) \leq \Pr(X > 3pt/2) \leq \exp(-p t/16),\]
as required in the proof above with $t = n^2$.

In the proof of Theorem~\ref{thm:part-i}, we will also need the following approximate concentration result for random subgraphs of $H(n)$, the complete blow-up of $H$ obtained by replacing the vertices of $H$ by disjoint sets $V_1, \ldots, V_k$ of size $n$ each and the edges of $H$ by complete bipartite graphs. Lemma~\ref{lemma:part-i-upper-tail} below is \cite[Proposition~3.6]{S12} with $H_n$ being the $e(H)$-uniform hypergraph on the vertex set $E(H(n))$ whose edges are the canonical copies of $H$ in $H(n)$. The proof of the fact that this hypergraph is $(K,n^{-1/m_2(H)})$-bounded, see~\cite[Definition~3.2]{S12}, is implicit in the proof of the $1$-statement of \cite[Theorem~2.7]{S12}. The definition of $\deg_{H'}$ is given in Section~\ref{sec:part-i-estimating-P*S*}.

\begin{lemma}[{\cite{RR95,S12}}]
  \label{lemma:part-i-upper-tail}
  Let $H$ be an arbitrary graph with $\Delta(H) \ge 2$. There exists a $K > 0$ such that for every proper $H' \subseteq H$ and every $\eta > 0$, there exist $b > 0$ and an integer $n_0$ such that for~every $n \geq n_0$, if $p \geq n^{-1/m_2(H)}$, then with probability at least $1 - \exp(-bpn^2)$, for every $ij \in E(H) \setminus E(H')$ there exists a subgraph $X \subseteq H(n)_p$ with $|X| \leq \eta pn^2$ satisfying
  \[
  \sum_{e \in (V_i,V_j)} \deg_{H'}^2\left(e, H(n), H(n)_p \setminus X\right) \leq K p^{2e(H')}n^{2v(H)-2}.
 \]
\end{lemma}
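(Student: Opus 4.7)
The plan is to derive this lemma essentially verbatim from~\cite[Proposition~3.6]{S12}, as the authors themselves indicate in the paragraph preceding the statement. Fix $H$ with $\Delta(H) \ge 2$ and a proper subgraph $H' \subsetneq H$. Define the $e(H)$-uniform hypergraph $H_n$ whose vertex set is $E(H(n))$ and whose hyperedges are the edge sets of canonical copies of $H$ in $H(n)$: each tuple $(v_1, \dots, v_k) \in V_1 \times \cdots \times V_k$ contributes the hyperedge $\{v_a v_b : ab \in E(H)\}$. Crucially, since $V(H_n) = E(H(n))$, the binomial random subgraph $H(n)_p$ coincides in distribution with the binomial random subset of $V(H_n)$, so the framework of Schacht's proposition applies to the random structure of interest.

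The first step is to verify that $H_n$ is $(K, n^{-1/m_2(H)})$-bounded in the sense of~\cite[Definition~3.2]{S12}. Unpacked, this reduces to the statement that for every subgraph $J \subseteq H$ with $v(J) \ge 2$, the number of canonical copies of $H$ in $H(n)$ extending a fixed canonical copy of $J$ is $O(n^{v(H) - v(J)})$, and that the corresponding exponents satisfy $v(H) - v(J) \le (v(H) - 2) - (e(H) - e(J))/m_2(H)$. The latter inequality rearranges to $(e(J) - 1)/(v(J) - 2) \le m_2(H)$, which is the defining property of $m_2(H)$ for $v(J) \ge 3$, with the case $v(J) = 2$ treated separately via the $K_2$ convention. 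This is precisely the boundedness verification carried out (implicitly) in the proof of the $1$-statement of~\cite[Theorem~2.7]{S12}.

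Once boundedness is in hand, apply~\cite[Proposition~3.6]{S12} to $H_n$: because $p \ge n^{-1/m_2(H)}$, with probability at least $1 - \exp(-b' p n^2)$ there exists a subset $X \subseteq H(n)_p$ with $|X| \le \eta p n^2$ after whose removal the relevant squared co-degree sums in $H_n$, restricted to $H(n)_p \setminus X$, are of the expected order. Now fix $ij \in E(H) \setminus E(H')$ and an edge $e \in (V_i, V_j)$. The quantity $\deg_{H'}(e, H(n), H(n)_p \setminus X)$ counts extensions of $e$ to a canonical copy of $H$ in $H(n)$ such that all edges in positions indexed by $E(H')$ survive in $H(n)_p \setminus X$; summing $\deg_{H'}^2$ over $e$ corresponds, after reindexing pairs of copies through $e$ by their shared $(v_i, v_j)$-rooting, to a co-degree sum controlled by Proposition~3.6 and bounded by $K p^{2e(H')} n^{2 v(H) - 2}$. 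A union bound over the at most $e(H)$ choices of $ij \in E(H) \setminus E(H')$ is absorbed into the constant $b$.

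The main obstacle is the bookkeeping in this translation step: one must match the abstract hypergraph co-degree quantities controlled by Proposition~3.6 to the concrete graph-theoretic sum $\sum_e \deg_{H'}^2$ in the statement, using the definition of $\deg_{H'}$ from Section~\ref{sec:part-i-estimating-P*S*}, and confirm that the exponents of $p$ and $n$ line up correctly. The boundedness verification is also routine but needs to be performed uniformly over all subgraphs $J \subseteq H$, not merely $H$ itself. Neither task involves substantive mathematical difficulty, which is why the authors cite the results of~\cite{RR95, S12} rather than reproducing the proof.
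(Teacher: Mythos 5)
Your approach matches the paper's: it derives the lemma from \cite[Proposition~3.6]{S12} applied to the $e(H)$-uniform hypergraph $H_n$ on vertex set $E(H(n))$ whose hyperedges are the canonical copies of $H$, with the required $(K,n^{-1/m_2(H)})$-boundedness drawn from the proof of the $1$-statement of \cite[Theorem~2.7]{S12}; the paper itself presents this as a citation rather than reproducing the argument. One small slip in your unpacking of the boundedness condition: the exponent inequality you write, $v(H)-v(J) \le (v(H)-2) - (e(H)-e(J))/m_2(H)$, does not rearrange to $(e(J)-1)/(v(J)-2) \le m_2(H)$ (it gives $(v(J)-2)\,m_2(H) \ge e(H)-e(J)$, a different and in general false statement, e.g.\ for $J=K_2$). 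The correct comparison, obtained from $p^{e(H)-e(J)}n^{v(H)-v(J)} \lesssim p^{e(H)-1}n^{v(H)-2}$ at $p=n^{-1/m_2(H)}$, is $v(H)-v(J)-(e(H)-e(J))/m_2(H) \le (v(H)-2)-(e(H)-1)/m_2(H)$, which does simplify to the defining property $(e(J)-1)/(v(J)-2)\le m_2(H)$ that you invoke, so your final conclusion is right even though the intermediate step is missing the $p$-power on the reference quantity.
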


Let $H$ be an arbitrary graph and let $d$ be a positive constant. In the remainder of this section, we prove Theorem~\ref{thm:part-i} by induction on the number of edges in $H'$.

\subsection{Induction base ($e(H') = 0$)}

\label{sec:proof-part-i-base}

Let $H'$ be the empty subgraph of $H$ and note that, regardless of $G'$, we have $|\C(H,G;H',G')| = G(H)$. The base of the induction follows immediately from the following one-sided version of the counting lemma, Lemma~\ref{lemma:H-count}, if we let $\e = \e_{\ref{lemma:H-count-lower}}(d)$, $\xi = \xi_{\ref{lemma:H-count-lower}}(d)$, $b = 1$, $C = 1$, and $n_0 = 1$.

\begin{lemma}
  \label{lemma:H-count-lower}
  For every graph $H$ and every $d > 0$, there exist $\e, \xi > 0$ such that for every $n$ and every $G \in \mathcal{G}_\ell(H,n,d,\e)$,
  \[
  G(H) \geq \xi n^{v(H)}.
  \]
\end{lemma}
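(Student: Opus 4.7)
The plan is to prove this by a greedy embedding procedure, essentially the standard argument that gives a one-sided embedding lemma for lower-regular tuples. Fix $H$ on vertex set $\{1, \ldots, k\}$ with maximum degree $\Delta = \Delta(H)$ (the case $\Delta = 0$ is trivial since then $G(H) = n^k$). I will choose $\e := (d/2)^{\Delta} / (2\Delta)$ and $\xi := 2^{-k} (d/2)^{e(H)}$, and show by iterative selection that $G(H) \geq \xi n^k$ for every $G \in \mathcal{G}_\ell(H,n,d,\e)$.

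The argument maintains, for each $j$, a current ``active set'' $W_j \subseteq V_j$ of vertices still available to be $v_j$, initialized to $W_j = V_j$. I will process the vertices of $H$ in the order $1, 2, \ldots, k$. At step $i$, I invoke the following lemma-style observation: for any $j > i$ with $ij \in E(H)$, if $|W_i|, |W_j| \geq \e n$, then the set $B_{ij} \subseteq W_i$ of vertices $v$ with $\deg_G(v, W_j) < (d/2)|W_j|$ has size less than $\e n$, because otherwise $(B_{ij}, W_j)$ would be a pair of sets of size $\geq \e n$ with density $< d/2 < d$, violating $(\e, d)$-lower-regularity of the bipartite graph between $V_i$ and $V_j$. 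Taking a union bound over the at most $\Delta$ such $j$'s, the set of ``good'' candidates for $v_i$ in $W_i$ has size at least $|W_i| - \Delta \e n$.

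Let $\deg^-(i)$ denote the number of neighbors of $i$ in $H$ with index less than $i$. Inductively, $W_i$ has been shrunk at most $\deg^-(i) \leq \Delta$ times, each time by a factor of at least $d/2$ (by the selection rule at earlier steps), so $|W_i| \geq (d/2)^{\deg^-(i)} n \geq (d/2)^{\Delta} n \geq 2\Delta \e n$. In particular $|W_i| \geq \e n$, so the regularity hypothesis applies, and the number of good choices for $v_i$ is at least $|W_i|/2 \geq (d/2)^{\deg^-(i)} n / 2$. After choosing any such $v_i$, update $W_j \leftarrow N_G(v_i) \cap W_j$ for each $j > i$ with $ij \in E(H)$; by the goodness condition, this shrinks $|W_j|$ by at most a factor $d/2$, maintaining the invariant.

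Multiplying the lower bounds on the number of choices at each step, and using $\sum_{i=1}^k \deg^-(i) = e(H)$, yields
\[
G(H) \geq \prod_{i=1}^{k} \frac{(d/2)^{\deg^-(i)} n}{2} = 2^{-k} (d/2)^{e(H)} n^{k} = \xi n^{v(H)}.
\]
There is no substantial obstacle here; the only care needed is to balance the two roles of $\e$ in the choice above, namely that $\e$ must be small enough both to keep the shrunken sets $W_j$ above the regularity threshold ($\e \leq (d/2)^{\Delta}$) and to ensure the union-bound loss $\Delta \e n$ stays below half of $|W_i|$ ($\e \leq (d/2)^{\Delta} / (2\Delta)$). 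Both are achieved by the choice $\e = (d/2)^{\Delta}/(2\Delta)$.
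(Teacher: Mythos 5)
Your greedy embedding argument is correct, and it is the standard proof of this fact; the paper states Lemma~\ref{lemma:H-count-lower} without proof, treating it as a routine one-sided consequence of the dense counting lemma. Your bookkeeping is sound: each candidate set $W_j$ is shrunk at most $\Delta$ times, each time to at least a $d/2$-fraction of its previous size, so it stays above the $\e n$ threshold needed to invoke $(\e,d)$-lower-regularity, and the union bound over at most $\Delta$ forward neighbours bounds the loss at step $i$ by $\Delta\e n \le |W_i|/2$. The resulting $\xi = 2^{-k}(d/2)^{e(H)}$ is weaker than the $\xi \ge d^{e(H)} - \delta$ mentioned in the remark after the lemma (which one gets by replacing the crude $d/2$ threshold with $d - O(\e)$ and tracking the losses more tightly), but the lemma only asks for some positive $\xi$, so your version suffices.
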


\noindent
In fact, by choosing $\e$ sufficiently small, one can show that $\xi \geq d^{e(H)} - \delta$.

\subsection{Induction step}

\label{sec:proof-part-i-step}

Let $H''$ be an arbitrary subgraph of $H'$ with $e(H') - 1$ edges. Let $\e$, $\xi'$, $b'$, $C'$, and $n_0'$ be the constants whose existence is asserted by the inductive assumption with $H'$ replaced by $H''$ and $d$ replaced by $d/4$, i.e., let
\[
\e = \e_{\ref{thm:part-i}}(H'', d/4), \quad \xi' = \xi_{\ref{thm:part-i}}(H'', d/4), \quad b' = b_{\ref{thm:part-i}}(H'', d/4), \quad \text{and} \quad C' = C_{\ref{thm:part-i}}(H'', d/4).
\]
We also let $K = K_{\ref{lemma:part-i-upper-tail}}(H)$, $\eta = \e^2d/8$, and $\hat{b} = b_{\ref{lemma:part-i-upper-tail}}(H, H'', \eta)$. Furthermore, let
\begin{equation}
  \label{eq:R}
  R = \left\lceil \frac{4K}{(\xi')^2} + 1 \right\rceil
\end{equation}
and let
\[
\xi = \frac{\xi' \e^2 d}{16(RL^R)^{e(H)}},
\quad
b = \min\left\{\frac{b^*}{2RL^R}, \frac{e(H)d}{21RL^R} \right\},
\quad\text{and}\quad
C = \frac{32RL^RC'}{\e^2d},
\]
where
\[
b^* = \min \left\{ \frac{\e^2 d}{16^2}, \frac{b'}{4}, \frac{\hat{b}}{4} \right\}
\quad\text{and}\quad
L = \frac{3e(H)}{b^*}.
\]
Throughout the proof, we will assume that $n \geq n_0$, where $n_0$ is sufficiently large. In particular, we will assume that $n_0$ is larger than the values of $n$ which come from applying Theorem~\ref{thm:part-i} and Lemma~\ref{lemma:part-i-upper-tail} in context. Finally, assume that $p$ satisfies $p \geq Cn^{-1/m_2(H)}$ and fix some $G \in \mathcal{G}_\ell(H, n,d,\e)$.

\subsubsection{Multiple exposure trick}

\label{sec:mult-expos-trick}

Let $\S$ denote the event that the random graph $G_p$ possesses the postulated property:

\smallskip
\noindent
\begin{center}
  \begin{tabular}{rp{0.85\textwidth}}
    $\S$: & Every subgraph $G' \subseteq G_p$ such that $G' \in \mathcal{G}_{\ell}(H, n,dp,\e)$ satisfies 
    $|\C(H,G;H',G')| \geq \xi p^{e(H')}n^{v(H)}$.
  \end{tabular}
\end{center}

\smallskip
\noindent
Following Samotij~\cite{Sj12}, we will consider a richer probability space that is in a natural correspondence with the space $\cP(G)$ of all subgraphs of $G$ equipped with the obvious probability measure $\Pr$, i.e., the distribution of the random graph $G_p$. To this end, let $p_1, \ldots, p_R \in [0,1]$ be the unique numbers that satisfy
\begin{equation}
  \label{eq:part-i-ps}
  1 - p = \prod_{s=1}^R (1 - p_s) \quad\text{and}\quad p_{s+1} = L p_s \text{ for every $s \in [R-1]$}
\end{equation}
and observe that
\begin{equation}
  \label{eq:part-i-ps-bounds}
  \sum_{s = 1}^R p_s \geq p \quad\text{and consequently}\quad p_s \geq p_1 \geq \frac{p}{RL^R}\text{ for every $s \in [R]$}.
\end{equation}
The richer probability space will be the space $\cP(G)^R$ equipped with the product measure $\Pr^*$ that is the distribution of the sequence $(G_{p_1}, \ldots, G_{p_R})$ of independent random variables, where for each~$s$, the variable $G_{p_s}$ is a $p_s$-random subgraph of $G$. Crucially, observe that due to our choice of $p_1, \ldots, p_s$, see~\eqref{eq:part-i-ps}, the natural mapping
\[
\varphi \colon \cP(G)^R \to \cP(G) \quad \text{defined by} \quad \varphi(G_1, \ldots, G_R) = G_1 \cup \ldots \cup G_R
\]
is measure preserving, i.e., for every $G_0 \subseteq G$,
\[
\Pr(G_0) = \Pr^*(\varphi^{-1}(G_0)).
\]
In other words, the variables $G_p$ and $G_{p_1} \cup \ldots \cup G_{p_R}$ have the same distribution. Finally, let $d^* = d/2$ and consider the following event in the space $\cP(G)^R$:

\smallskip
\noindent
\begin{center}
  \begin{tabular}{rp{0.85\textwidth}}
    $\S^*$: & For every $G_1' \subseteq G_{p_1}, \ldots, G_R' \subseteq G_{p_R}$ such that $G_s' \in \mathcal{G}_{\ell}(H, n,d^*p_s,\e)$ for every $s \in [R]$, we have $|\C(H,G;H',G_1' \cup \ldots \cup G_R')| \geq \xi p^{e(H')}n^{v(H)}$.
  \end{tabular}
\end{center}

\smallskip
\noindent
There are two reasons why we consider the probability space $\cP(G)^R$. The first reason is that the probability of $\S^*$ is much easier to estimate than the probability of $\S$. The second reason is that a lower bound on $\Pr^*(\S^*)$ implies a (marginally weaker) lower bound on $\Pr(\S)$, which we show below.

\begin{claim}
  \label{claim:star}
  $1 - \Pr(\S) \leq 2 \cdot \big(1 - \Pr^*(\S^*)\big)$.
\end{claim}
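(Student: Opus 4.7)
The plan is to transfer the problem to the product space via the measure-preserving map $\varphi$ and then exploit the following conditional reformulation. Since $\varphi$ is measure preserving, $\Pr(\S) = \Pr^*(\varphi^{-1}(\S))$, so the desired inequality is equivalent to
\[
\Pr^*(\neg \varphi^{-1}(\S)) \leq 2\Pr^*(\neg \S^*),
\]
which in turn follows from proving
\[
\Pr^*(\neg \S^* \mid G_p) \geq \tfrac{1}{2} \quad \text{whenever } \S \text{ fails at } G_p := G_{p_1}\cup\dots\cup G_{p_R}.
\]
Integrating this bound over the (bad) values of $G_p$ then yields $\Pr^*(\neg \S^*) \geq \tfrac{1}{2}\Pr^*(\neg \varphi^{-1}(\S))$, which is exactly what we need.

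Suppose $\S$ fails at some realization of $G_p$. Fix, as a deterministic function of $G_p$, a witness $G' \subseteq G_p$ with $G' \in \G_\ell(H,n,dp,\e)$ and $|\C(H,G;H',G')| < \xi p^{e(H')}n^{v(H)}$. The natural candidate decomposition is $G_s' := G' \cap G_{p_s}$: trivially $G_s' \subseteq G_{p_s}$ and $\bigcup_s G_s' = G'$, so $\C(H,G;H',\bigcup_s G_s') = \C(H,G;H',G')$ is already below the threshold. It therefore suffices to show that, with conditional probability at least $\tfrac{1}{2}$ (given $G_p$), every $G_s'$ lies in $\G_\ell(H,n,d^*p_s,\e)$; the tuple $(G_1',\dots,G_R')$ then witnesses $\neg \S^*$.

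The key probabilistic observation is that, conditional on $G_p$, the indicators $\mathbf{1}[e \in G_{p_s}]$ for distinct edges $e \in G_p$ are mutually independent, with $\Pr(e \in G_{p_s} \mid e \in G_p) = p_s/p$ (we only need independence across distinct edges, not across $s$). For a fixed $ij \in E(H)$, a fixed $s \in [R]$, and fixed $V_i' \subseteq V_i$, $V_j' \subseteq V_j$ with $|V_i'|, |V_j'| \geq \e n$, the lower regularity of $G'$ supplies at least $dp|V_i'||V_j'|$ edges of $G'$ in $(V_i',V_j')$, so the number of edges of $G_s'$ there is a sum of independent Bernoullis with mean at least $dp_s|V_i'||V_j'|$. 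The lower-tail form of Chernoff's inequality stated in the paper therefore bounds the probability that this count falls below $d^*p_s|V_i'||V_j'| = \tfrac{dp_s}{2}|V_i'||V_j'|$ by $\exp(-c\, d^2\e^2 p_s n^2)$ for some absolute $c>0$.

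The main obstacle is showing that this Chernoff estimate dominates the union bound over all quadruples $(s,ij,V_i',V_j')$, of which there are at most $R\cdot e(H)\cdot 4^{n}$. This is precisely what the choice of $C$ is engineered for: from $p_s \geq p_1 \geq p/(RL^R)$, $p \geq Cn^{-1/m_2(H)}$, and $m_2(H)\geq 1$, we obtain $p_s n^2 \geq Cn/(RL^R)$, and the value $C = 32RL^RC'/(\e^2 d)$ makes the Chernoff exponent $c\,d^2\e^2 p_s n^2$ a large constant multiple of $n$, comfortably dominating the $(2\log 2)n$ arising from the subset enumeration (and the harmless $\log(R\cdot e(H))$). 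Thus the total failure probability is $o(1) \leq \tfrac{1}{2}$ for $n \geq n_0$, completing the claim.
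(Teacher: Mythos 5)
Your proof is correct and follows the paper's argument essentially verbatim: the same reduction to a conditional bound, the same decomposition $G_s' = G' \cap G_{p_s}$, the same observation that conditioning on the union makes each edge of $\bG$ independently present in $G_{p_s}$ with probability $p_s/p$, and the same Chernoff-plus-union-bound finish using the choice of $C$. (One cosmetic slip: the Chernoff exponent should scale like $d\e^2 p_s n^2$, not $d^2\e^2 p_s n^2$, since the mean is at least $dp_s|V_i'||V_j'| \geq d\e^2 p_s n^2$ and the deviation is a constant fraction of the mean; this does not affect the conclusion.)
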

\begin{proof}
  Note that in order to prove the claim, it suffices to show that
  \begin{equation}
    \label{eq:claim-star}
    \Pr^*(\S^* \mid \varphi^{-1}(\bG)) = \Pr^*(\S^* \mid G_{p_1} \cup \ldots \cup G_{p_R} = \bG) \leq 1/2
  \end{equation}
  for every $\bG$ that does \emph{not} satisfy $\S$. Indeed, assuming that~\eqref{eq:claim-star} holds for every such $\bG$, we have
  \[
  \begin{split}
    1 - \Pr(\S) & = \sum_{\bG \not\in \S} \Pr(G_p = \bG) = \sum_{\bG \not\in \S} \Pr^*(G_{p_1} \cup \ldots \cup G_{p_R} = \bG) \\
    & \le \sum_{\bG \not\in \S} 2\left(1 - \Pr^*(\S^* \mid G_{p_1} \cup \ldots \cup G_{p_R} = \bG)\right) \cdot \Pr^*(G_{p_1} \cup \ldots \cup G_{p_R} = \bG) \\
    & \le 2 \sum_{\bG} \left(1 - \Pr^*(\S^* \mid G_{p_1} \cup \ldots \cup G_{p_R} = \bG)\right) \cdot \Pr^*(G_{p_1} \cup \ldots \cup G_{p_R} = \bG) \\
    & = 2(1 - \Pr^*(\S^*)).
  \end{split}
  \]

  Consider an arbitrary $\bG \subseteq G$ that does not satisfy $\S$. By the definition of $\S$, there exists a subgraph $G' \subseteq \bG$ such that $G' \in \mathcal{G}_{\ell}(H,n,dp,\e)$ but $|\C(H,G;H',G')| < \xi p^{e(H')} n^{v(H)}$. Consider the event $\varphi^{-1}(\bG)$, i.e., the event $G_{p_1} \cup \ldots \cup G_{p_R} = \bG$. Now, for each $s \in [R]$, let $G_s' = G' \cap G_{p_s}$. Since clearly $\C(H,G;H',G') = \C(H,G;H',G_1' \cup \ldots\cup G_R')$, it suffices to show that with probability at least $1/2$, we have $G'_s \in \mathcal{G}_{\ell}(H,n,d^*p_s,\e)$ for every $s \in [R]$.

  To this end, observe that conditioned on the event $G_{p_1} \cup \ldots \cup G_{p_R} = \bG$, for each $s \in [R]$, the variable $G_{p_s}$ has the same distribution as $\bG_{p'_s}$, where $p'_s = p_s / p$ (although $G_{p_1}, \ldots, G_{p_R}$ are no longer independent). Since $G' \in \G_{\ell} (H,n, dp, \e)$ and for each $s \in [R]$, the graph $G_s'$ is simply a $p'_s$-random subgraph of $G'$, it follows from Chernoff's inequality and the union bound that for fixed $s \in [R]$, the probability that $G_s' \not\in \G_{\ell}(H,n,d^*p_s,\e) = \mathcal{G}_{\ell}(H,n,dp \cdot p'_s/2,\e)$ is at most
  \[
  e(H) \cdot 2^{2n} \cdot \exp(-\e^2dp_sn^2/16).
  \]
  Since $p_s \geq p/(RL^R) \geq C n^{-1/m_2(H)}/(R L^R) \ge 32n^{-1}/(\e^2d)$, the claimed estimate follows from the union bound, provided that $n$ is sufficiently large.
\end{proof}

\subsubsection{Estimating the probability of $\S^*$}

\label{sec:part-i-estimating-P*S*}

In the remainder of the proof, we will work in the space $\cP(G)^R$ and estimate the probability of the event $\S^*$, that is, $\Pr^*(\S^*)$. Let $G_{p_1}, \ldots, G_{p_R}$ be independent random subgraphs of $G$. Given $G_1' \subseteq G_{p_1}, \ldots, G_R' \subseteq G_{p_R}$, let
\[
G(s) = (G_{p_1}, \ldots, G_{p_s}) \quad\text{and}\quad G'(s) = (G_1', \ldots, G_s').
\]
Let $ij$ be the edge of $H'$ that is missing in $H''$ and let $V_i$ and $V_j$ be the subsets of $V(G)$ corresponding to the vertices $i$ and $j$, respectively. We consider the set $Z_s$ of `rich' edges of $G(V_i, V_j)$ that belong to many copies of $H$ from $\C(H,G;H'',G_s')$, which we define by
\[
Z_s = \left\{ e \in G(V_i, V_j) \colon \deg_{H''}(e, G, G_s') \geq \frac{\xi'}{2} p_s^{e(H'')} n^{v(H)-2} \right\},
\]
where
\[
\deg_{H''}(e, G, G') = \left| \left\{ J \in \C(H,G;H'',G') \colon e \in J \right\} \right|,
\]
and let
\[
Z(s) = Z_1 \cup \ldots \cup Z_s.
\]
Now comes the key step in the proof. We show that with very high probability, for every $s \in [R]$, regardless of $G(s-1)$ and $G'(s-1)$, either $\C(H,G;H',G_1' \cup \ldots \cup G_s')$ is large or the set $Z(s)$ of `rich' edges grows by more than $n^2/R$, that is, $|Z(s) \setminus Z(s-1)| \ge n^2/R$.

\begin{claim}
  \label{claim:part-i-main}
  For every $s \in [R]$ and every choice of $G'(s-1) \subseteq G(s-1) \in \cP(G)^{s-1}$, let $\S_{G'(s-1)}^*$ denote the event that $G_{p_s}$ has the following property: For every $G_s' \subseteq G_{p_s}'$ that satisfies $G_s' \in \G_{\ell}(H,n,d^*p_s,\e)$, either
  \begin{equation}
    \label{eq:part-i-done}
    \left|\C(H,G;H',G_1' \cup \ldots \cup G_s')\right| \geq \xi p^{e(H')} n^{v(H)}
  \end{equation}
  or
  \begin{equation}
    \label{eq:Z-grows}
    \left|Z(s) \setminus Z(s-1)\right| \geq \frac{(\xi')^2}{4K} n^2.
  \end{equation}
  Then for every $\bG \in \cP(G)^{s-1}$,
  \[
  \Pr^*(\S^*_{G'(s-1)} \mid G(s-1) = \bG) \geq 1 - \exp(-2b^*p_sn^2),
  \]
  where $\Pr^*(\S^*_{G'(0)} \mid G(0) = \bG) = \Pr^*(\S^*_{G'(0)})$.
\end{claim}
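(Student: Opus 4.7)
Since $G_{p_s}$ is independent of $G(s-1)$, the plan is to work over the randomness of $G_{p_s}$ alone (treating the fixed data $Z(s-1)$ and $G_1' \cup \cdots \cup G_{s-1}'$ as given), identify a ``good event'' $\mathcal{E}$ for $G_{p_s}$ with $\Pr(\overline{\mathcal{E}}) \leq \exp(-2b^* p_s n^2)$, and then verify the required dichotomy deterministically on $\mathcal{E}$. The event is $\mathcal{E} = E_1 \cap E_2$: $E_1$ is the output of the inductive hypothesis for $H'' \subsetneq H$ and density $d/4$ (applicable since $G \in \mathcal{G}_\ell(H,n,d/4,\e)$ and every admissible $G_s' \in \mathcal{G}_\ell(H,n,d^*p_s,\e)$ also lies in $\mathcal{G}_\ell(H,n,(d/4)p_s,\e)$), giving $|\C(H,G;H'',G_s')| \geq \xi' p_s^{e(H'')} n^{v(H)}$ for every admissible $G_s'$; $E_2$ is the output of Lemma~\ref{lemma:part-i-upper-tail} applied with missing edge $ij$ and parameter $\eta = \e^2 d / 8$, transferred from $H(n)_{p_s}$ to $G_{p_s}$ via the natural coupling --- namely, a set $X \subseteq G_{p_s}$ of size $\leq \eta p_s n^2$ with $\sum_{e \in (V_i,V_j)} \deg_{H''}^2(e, G, G_{p_s} \setminus X) \leq K p_s^{2e(H'')} n^{2v(H)-2}$. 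A union bound with $b^* \leq \min\{b'/4, \hat b /4\}$ gives the desired probability estimate.

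Fixing an admissible $G_s'$ on $\mathcal{E}$ and assuming \eqref{eq:Z-grows} fails, I use $E_1$ and the fact that every canonical copy of $H$ has exactly one edge in $(V_i, V_j)$ to write
\[
\sum_{e \in (V_i,V_j)} \deg_{H''}(e, G, G_s') = |\C(H,G;H'',G_s')| \geq \xi' p_s^{e(H'')} n^{v(H)};
\]
since poor edges $e \notin Z_s$ contribute strictly less than half of this sum by the definition of $Z_s$, the rich edges contribute $\sum_{e \in Z_s} \deg_{H''}(e, G, G_s') \geq \tfrac{\xi'}{2} p_s^{e(H'')} n^{v(H)}$. Cauchy--Schwarz together with $E_2$ (and the inclusion $G_s' \setminus X \subseteq G_{p_s} \setminus X$) then gives the matching upper bound
\[
\sum_{e \in Z_s \setminus Z(s-1)} \deg_{H''}(e, G, G_{p_s} \setminus X) \leq \sqrt{K |Z_s \setminus Z(s-1)|} \cdot p_s^{e(H'')} n^{v(H) - 1} < \tfrac{\xi'}{2} p_s^{e(H'')} n^{v(H)}
\]
under the assumption that \eqref{eq:Z-grows} fails; a careful control of the $G_s'$-vs.-$G_s' \setminus X$ discrepancy (which is at most $O(|X| n^{v(H)-2})$ in the worst case and uses the upper-tail bound to absorb the interaction with $e(H'')$) forces most of the rich-edge mass to be concentrated on $Z_s \cap Z(s-1)$.

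The final step --- which I expect to be the main technical hurdle --- is to convert this concentration into the lower bound \eqref{eq:part-i-done}. For each $e \in Z_s \cap Z(s-1)$, I pick a witnessing round $t \leq s-1$; the $\geq \tfrac{\xi'}{2} p_t^{e(H'')} n^{v(H)-2}$ copies witnessing $e \in Z_t$ have their $H''$-edges in $G_t' \subseteq G_1' \cup \cdots \cup G_s'$, so they lie in $\C(H,G;H',G_1' \cup \cdots \cup G_s')$ as soon as $e$ itself lies in the union. Securing enough such ``usable'' edges uses the $(\e, d^*p_t)$-lower-regularity of each $G_t'$ in the pair $(V_i, V_j)$ together with the size bound $|\bigcup_{t \leq s} G_t' \cap (V_i, V_j)| \geq d^* p n^2$ (from $\sum_t p_t \geq p$ in \eqref{eq:part-i-ps-bounds}), and the numerical bookkeeping is handled by the choice $\xi = \xi' \e^2 d / (16(RL^R)^{e(H)})$ together with $p_s \geq p/(RL^R)$.
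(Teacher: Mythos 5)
Your overall shape is right — an $L^1$ lower bound from the inductive hypothesis, an $L^2$ upper bound from Lemma~\ref{lemma:part-i-upper-tail}, and Cauchy--Schwarz to force either a large count or a large new set of rich edges — but there are two concrete gaps, both of which the paper sidesteps by a key structural maneuver that you do not identify.

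First, you apply the inductive hypothesis to $G$ itself, obtaining a lower bound on $\sum_e \deg_{H''}(e,G,G_s')$, and then invoke the $L^2$ bound for $\deg_{H''}(e,G,G_{p_s}\setminus X)$. These degrees count copies with respect to \emph{different} subgraphs ($G_s'$ versus $G_s'\setminus X$), and your claimed control of the discrepancy is not sufficient. In the worst case a single copy that passes through an edge of $X$ can contribute freely, and the total deterministic discrepancy is of order $|X|\cdot n^{v(H)-2}\leq \eta p_s n^{v(H)}$, which is \emph{not} small relative to $\xi' p_s^{e(H'')}n^{v(H)}$ once $e(H'')\geq 2$. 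The paper avoids this entirely by applying the inductive hypothesis not to $G_s'$ but to $\tG'=G_s'\setminus(X\cup Z(s-1))$: it first verifies that $\tG'$ is still $(\e,(d/4)p_s)$-lower-regular (using that $e(X)$ and $e(G_s'\cap Z(s-1))$ are small), and then the $L^1$ lower bound and the $L^2$ upper bound refer to the \emph{same} degree $\deg_{H''}(e,\tG,\tG')$, so no discrepancy term appears at all.

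Second, your ``final step'' — turning the concentration of rich-edge mass on $Z_s\cap Z(s-1)$ into the count \eqref{eq:part-i-done} — needs many edges of $G_s'$ inside $Z_s\cap Z(s-1)$, but the ``usable edges'' argument you sketch via lower-regularity does not go through: lower-regularity controls edge densities on \emph{product} sets $X_i\times X_j$, whereas $Z_s\cap Z(s-1)$ is an arbitrary bipartite subgraph. The paper resolves this with a case analysis that you do not have. It applies the inductive hypothesis to $\tG=G\setminus Z(s-1)$ rather than $G$, which requires knowing that $B\setminus Z(s-1)$ (where $B=G(V_i,V_j)$) is $(\e,d^*/2)$-lower-regular. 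If it is not, then there exist sets $X_i,X_j$ of size $\geq\e n$ witnessing the failure, and on this \emph{product} set the lower-regularity of $G_s'$ forces $e_{G_s'\cap Z(s-1)}(X_i,X_j)$ to be large, which yields \eqref{eq:part-i-done} directly via the telescoping bound \eqref{eq:part-i-CHGs}. If it is lower-regular, the induction applies to $\tG$ and all the rich edges $Z_s'$ it produces live inside $\tG(V_i,V_j)=B\setminus Z(s-1)$, so $Z_s'$ is \emph{automatically} disjoint from $Z(s-1)$ and \eqref{eq:Z-grows} follows. Your proposal never observes that removing $Z(s-1)$ from $G$ before invoking the induction is what makes the new rich edges new, and never confronts what to do when $B\setminus Z(s-1)$ fails to be regular.
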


\subsubsection{Deducing Theorem~\ref{thm:part-i} from Claims~\ref{claim:star} and~\ref{claim:part-i-main}}

For every $s \in [R]$, let $\A_s$ denote the event that $e(G_{p_s}) \leq 2p_s e(G)$ and let $\A(s) = \A_1 \cap \ldots \cap \A_s$. Observe that by~\eqref{eq:part-i-ps},
\begin{equation}
  \label{eq:part-i-Us-}
  \sum_{t=1}^{s-1} 2p_t e(G) \leq 3p_{s-1}e(G) = \frac{3p_se(G)}{L} \leq \frac{3e(H)p_sn^2}{L}.
\end{equation}
By Chernoff's inequality, \eqref{eq:part-i-ps}, \eqref{eq:part-i-ps-bounds}, and the fact that $e(G) \geq e(H) d n^2$,
\begin{equation}
  \label{eq:part-i-not-A}
  \Pr^*(\neg \A(s-1)) \leq \sum_{t=1}^{s-1} \exp\left(-\frac{p_t e(G)}{16} \right) \leq s \cdot \exp\left(-\frac{p_1e(G)}{16}\right) \leq \exp\left(-\frac{e(H)dp_1n^2}{20} \right).
\end{equation}
Now, for every $s \in [R]$, let $\S_s^*$ denote the event that $\A(s-1)$ holds and $\S_{G'(s-1)}^*$ holds for all $G'(s-1) \subseteq G(s-1)$. Here, $G'(s-1) \subseteq G(s-1)$ means that inclusion holds coordinate-wise. Observe that if $\S_s^*$ holds for all $s \in [R]$, then $\S^*$ must hold since~\eqref{eq:Z-grows} in Claim~\ref{claim:part-i-main} can occur at most $R-1$ times, see~\eqref{eq:R}. Let
\[
\Gh = \left\{ \bG \in \cP(G)^{s-1} \colon |\bG_t| \leq 2p_te(G) \text{ for all $t \in [s-1]$}\right\}
\]
and note that
\begin{align}
  \label{eq:part-i-P*Ss*}
  \Pr^*(\neg \S_s^*) & \leq \Pr^*(\neg \A(s-1)) + \Pr^*(\neg \S_s^* \wedge \A(s-1)) \\
  \nonumber
  &  = \Pr^*(\neg \A(s-1)) + \sum_{\bG \in \Gh} \Pr^*(\neg \S_s^* \wedge G(s-1) = \bG).
\end{align}
Now, by Claim~\ref{claim:part-i-main}, for every $\bG \in \Gh$,
\begin{align}
  \label{eq:part-i-P*Ss*bU}
  \Pr^*(\neg \S_s^* \wedge G(s-1) = \bG) & = \Pr^*(\neg \S_s^* \mid G(s-1) = \bG) \cdot \Pr^*(G(s-1) = \bG) \\
  \nonumber
  & \leq \sum_{G'(s-1) \subseteq \bG} \Pr^*(\neg \S_{G'(s-1)}^* \mid G(s-1) = \bG) \cdot \Pr^*(G(s-1) = \bG) \\
  \nonumber
  & \leq 2^{\sum_{t=1}^{s-1}2p_te(G)} \cdot \exp\left(-2b^*p_sn^2\right) \cdot \Pr^*(G(s-1) = \bG).
\end{align}
Since clearly $\sum_{\bG \in \Gh} \Pr^*(G(s-1) = \bG) \leq 1$, it follows from~\eqref{eq:part-i-Us-}, \eqref{eq:part-i-not-A}, \eqref{eq:part-i-P*Ss*}, and \eqref{eq:part-i-P*Ss*bU} that
\begin{align}
  \label{eq:part-i-P*S*}
  \Pr^*(\neg \S^*) & \leq \sum_{s=1}^R \Pr^*(\neg \S_s^*) \leq R \exp\left( -\frac{e(H)dp_1n^2}{20} \right) + \sum_{s=1}^R 2^{\frac{3e(H)p_sn^2}{L}}\exp\left(-2b^*p_sn^2\right) \\
  \nonumber
  & \leq R\exp\left(-\frac{e(H)dp_1n^2}{20}\right) + R\exp\left(-b^*p_1n^2\right) \leq \frac{1}{2}\exp\left(-bpn^2\right).
\end{align}
Now, Theorem~\ref{thm:part-i} easily follows from \eqref{eq:part-i-P*S*} and Claim~\ref{claim:star}.

\subsubsection{Proof of Claim~\ref{claim:part-i-main}}

Let $s \in [R]$, condition on the event $G(s-1) = \bG$ for some $\bG \in \cP(G)^{s-1}$, and assume that $G'(s-1)$ is given. Note that this uniquely defines the graph $Z(s-1)$ of `rich' edges. Also, observe that it follows from the definition of $Z(s-1)$ and~\eqref{eq:part-i-ps-bounds} that
\begin{align}
  \label{eq:part-i-CHGs}
  |\C(H,G;H',G_1' \cup \ldots \cup G_s')| & \geq \sum_{e \in G_s' \cap Z(s-1)} \deg_{H''}(e, G, G_1' \cup \ldots \cup G_{s-1}') \\
  \nonumber
  & \geq e\big(G_s' \cap Z(s-1)\big) \cdot \frac{\xi'}{2}p_1^{e(H'')} n^{v(H)-2} \\
  \nonumber
  & \geq \frac{e\big(G_s' \cap Z(s-1)\big)}{p_s} \cdot \frac{\xi'}{2}\frac{p^{e(H')}}{(RL^R)^{e(H')}} n^{v(H)-2} \\
  \nonumber
  & \geq \frac{e\big(G_s' \cap Z(s-1)\big)}{(\e^2d/8) p_s n^2} \cdot \xi p^{e(H')} n^{v(H)},
\end{align}
hence it will be enough if we show that
\begin{equation}
  \label{eq:part-i-Gs-Zs-}
  e\left(G_s' \cap Z(s-1)\right) \geq \frac{\e^2d}{8} p_s n^2.
\end{equation}
Let $B = G(V_i, V_j)$. We consider two cases, depending on whether the bipartite graph $B \setminus Z(s-1)$ of `poor' edges is $(\e,d^*/2)$-lower-regular.

\medskip
\noindent
{\bf Case 1. $B \setminus Z(s-1)$ is not $(\e,d^*/2)$-lower-regular}

\smallskip
\noindent
In this case, there exist sets $X_i \subseteq V_i$ and $X_j \subseteq V_j$ with $|X_i|, |X_j| \geq \e n$ such that
\[
e_{B \setminus Z(s-1)}(X_i, X_j) < (d^*/2)|X_i||X_j|.
\]
By Chernoff's inequality, with probability at least $1 - \exp(-2b^*p_sn^2)$, the graph $B_{p_s}$ satisfies
\[
e_{B_{p_s} \setminus Z(s-1)}(X_i,X_j) \leq (3d^*/4)p_s|X_i||X_j|.
\]
Consequently, since for every $G_s' \subseteq G$ satisfying $G_s' \in \G_{\ell}(H,n,d^*p_s,\e)$, the graph $G_s' \cap B$ is $(\e,d^* p_s)$-lower-regular, then
\begin{align*}
  e\left(G_s' \cap Z(s-1)\right) & \geq e_{G_s' \cap Z(s-1)}(X_i, X_j) \geq e_{G_s'}(X_i, X_j) - e_{B_{p_s} \setminus Z(s-1)}(X_i,X_j) \\
  & \geq (d^*/4)p_s|X_i||X_j| \geq (d^*/4)\e^2p_sn^2 = (d/8)\e^2p_sn^2,
\end{align*}
which, by~\eqref{eq:part-i-CHGs} and~\eqref{eq:part-i-Gs-Zs-}, proves~\eqref{eq:part-i-done}.

\medskip
\noindent
{\bf Case 2. $B \setminus Z(s-1)$ is $(\e,d^*/2)$-lower-regular}

\smallskip
\noindent
In this case, we will apply the inductive assumption to the graph $G \setminus Z(s-1)$, which clearly is $(\e,d^*/2)$-lower regular as $Z(s-1) \subseteq B \subseteq G$. First, observe that if $e\big(G_s' \cap Z(s-1)\big) \geq (d/8)\e^2p_sn^2$, then this, by~\eqref{eq:part-i-CHGs} and~\eqref{eq:part-i-Gs-Zs-}, proves~\eqref{eq:part-i-done}, so from now on we may assume that the opposite inequality holds, i.e., that
\begin{equation}
  \label{eq:part-i-Gs-Zs-neg}
  e\big(G_s' \cap Z(s-1)\big) < \frac{d}{8} \e^2 p_s n^2.
\end{equation}
Let $\tG = G \setminus Z(s-1)$ and observe again that $\tG \in \G_{\ell}(H,n, d/4, \e)$. Note that by~\eqref{eq:part-i-ps-bounds} and our assumption on $p$, we have $p_s \geq \frac{p}{RL^R} \geq \frac{Cn^{-1/m_2(H)}}{RL^R} \geq C'n^{-1/m_2(H)}$ and hence by the inductive assumption applied to $\tG$, with probability at least $1 - \exp(-b'p_sn^2)$, every subgraph $\tG' \subseteq \tG_{p_s}$ in $\G_{\ell}(H,n,(d/4)p_s,\e)$ satisfies
\[
|\C(H,\tG;H'',\tG')| \geq \xi'p_s^{e(H'')}n^{v(H)}.
\]
Moreover, it follows from Lemma~\ref{lemma:part-i-upper-tail} that with probability at least $1 - \exp(-\hat{b}p_s|V|)$, there exists a subgraph $X \subseteq \tG_{p_s}$ satisfying 
\begin{equation}
  \label{eq:part-i-X}
  e(X) \leq \eta p_s n^2
\end{equation}
and
\begin{equation}
  \label{eq:U'-L2-norm}
  \sum_{e \in \tG(V_i,V_j)} \deg_{H''}^2(e, \tG, \tG_{p_s} \setminus X) \leq K p_s^{2e(H'')}n^{2v(H)-2}.
\end{equation}
Consider the graph $\tG' \subseteq \tG_{p_s}$ defined by $\tG' = G_s' \setminus (X \cup Z(s-1))$. We verify that $\tG' \in \G_{\ell}(H,n, (d/4)p_s, \e)$. It follows from~\eqref{eq:part-i-Gs-Zs-neg} and~\eqref{eq:part-i-X} that for every $i'j' \in E(H)$ and every pair of sets $W_{i'} \subseteq V_{i'}$ and $W_{j'} \subseteq V_{j'}$ with $|W_{i'}| \geq \e n$ and $|W_{j'}| \geq \e n$,
\begin{align*}
  e_{\tG'}(W_{i'},W_{j'}) & \geq e_{G_s'}(W_{i'},W_{j'}) - e\big(G_s' \cap Z(s-1)\big) - e(X) \\
  & \geq \left( d^* - \frac{d}{8} - \frac{\eta}{\e^2} \right) p_s|W_{i'}||W_{j'}| \geq (d/4) p_s |W_{i'}||W_{j'}|.
\end{align*}
From the inductive assumption (which holds for $\tG_{p_s}$ with probability at least $1 - \exp\left(-b'p_sn^2\right)$), we infer that
\begin{equation}
  \label{eq:U'-L1-norm}
  \sum_{e \in \tG(V_i,V_j)} \deg_{H''}(e, \tG, \tG') = |\C(H,\tG;H'',\tG')| \geq \xi' p_s^{e(H'')} n^{v(H)}.
\end{equation}
Let
\[
Z_s' = \left\{ e \in \tG(V_i,V_j) \colon \deg_{H''}(e,\tG,\tG') \geq \frac{\xi'}{2}p_s^{e(H'')} n^{v(H)-2} \right\}
\]
and note that, by definition, $Z_s' \subseteq Z_s$ and, by~\eqref{eq:U'-L1-norm},
\begin{align}
  \label{eq:Zs'-L1-norm}
  \sum_{e \in Z_s'} \deg_{H''}(e, \tG, \tG') & \geq \sum_{e \in \tG(V_i,V_j)} \deg_{H''}(e, \tG, \tG') - e\big(\tG(V_i, V_j) \setminus Z_s'\big) \cdot \frac{\xi'}{2}p_s^{e(H'')} n^{v(H)-2} \\
  \nonumber
  & \geq \xi' p_s^{e(H'')} n^{v(H)} - n^2 \cdot \frac{\xi'}{2} p_s^{e(H'')} n^{v(H)-2} = \frac{\xi'}{2} p_s^{e(H'')} n^{v(H)}.
\end{align}
It follows from~\eqref{eq:U'-L2-norm}, \eqref{eq:Zs'-L1-norm}, and the Cauchy-Schwarz inequality that
\begin{align*}
  Kp_s^{2e(H'')} n^{2v(H)-2} & \geq \sum_{e \in \tG(V_i,V_j)} \deg_{H''}^2(e, \tG, \tG_{p_s} \setminus X) \geq \sum_{e \in \tG(V_i,V_j)} \deg_{H''}^2(e, \tG, \tG') \\
  & \geq \frac{1}{|Z_s'|} \left( \sum_{e \in Z_s'} \deg_{H''}(e, \tG, \tG') \right)^2 \geq \frac{1}{|Z_s'|} \left( \frac{\xi' p_s^{e(H'')} n^{v(H)}}{2}\right)^2
\end{align*}
and consequently,
\[
|Z_s'| \geq \frac{(\xi')^2}{4K}n^2.
\]
Since $Z_s' \subseteq \tG = G \setminus Z(s-1)$, the graphs $Z_s'$ and $Z(s-1)$ are disjoint. Therefore, \eqref{eq:Z-grows} holds with probability at least
\[
1 - \exp(-b'p_sn^2) - \exp(-\hat{b}p_sn^2),
\]
which is at least $1 - \exp(-2b^*p_s n^2)$. This concludes the proof of the claim.

\section{K\L R conjecture via transference}

The method employed by Conlon and Gowers \cite{CG12} to determine probabilistic thresholds for combinatorial theorems hinges on proving a transference principle \cite{G10, GT08, RTTV08}. Suppose, for example, that one wishes to prove Tur\'an's theorem for triangles within the random graph $G_{n, p}$ for $p \geq C n^{-1/2}$. Then they show that asymptotically almost surely $G_{n,p}$ has the following property. Any subgraph $G$ of $G_{n,p}$ may be modelled by a subgraph $K$ of the complete graph on $n$ vertices in such a way that the proportion of edges and triangles in the dense graph is close to the proportion of edges and triangles in the sparse graph. That is, if the sparse graph $G$ contains $c_1 p n^2$ edges and $c_2 p^3 n^3$ triangles then the dense model graph $K$ should contain approximately $c_1 n^2$ edges and $c_2 n^3$ triangles.

To see that this implies Tur\'an's theorem in the sparse graph, suppose that $G$ is a subgraph of $G_{n,p}$ with $\left(\frac{1}{2} + \epsilon\right) p \binom{n}{2}$ edges. Then, provided the approximation is sufficiently good, the model graph $K$ will have at least $\left(\frac{1}{2} + \frac{\epsilon}{2}\right) \binom{n}{2}$ edges. A robust version of Tur\'an's theorem (as discussed in Section \ref{sec:cliquedensity}) then implies that $K$ contains at least $c n^3$ triangles for some $c > 0$. Provided again that the approximation is sufficiently good, this implies that the original graph $G$ contains at least $\frac{c}{2} p^3 n^3$ triangles. 

More generally, we have the following theorem (Corollary 9.7 in \cite{CG12}) from which many such threshold results for graphs may be derived in a similar fashion. For any graph $G$, we let the function $G \colon V(G)^2 \rightarrow \{0,1\}$ be the characteristic function of $G$, given by $G(x,y) = 1$ if $xy \in E(G)$ and $0$ otherwise. For any fixed graph $H$ on $k$ vertices, we also let
\[\mu_H(G) = N^{-k} \sum_{x_1, \dots, x_k} \prod_{ij \in E(H)} G(x_i, x_j)\]
be the normalized count of homomorphisms from $H$ to $G$.

\begin{theorem} \label{thm:transfer}
For any strictly balanced graph $H$ and any $\e > 0$, there exist positive constants $C$ and $\lambda$ such that if $C N^{-1/m_2(H)} \leq p \leq \lambda$ then the following holds a.a.s.\ in the random graph $G_{N,p}$. For every subgraph $G$ of $G_{N,p}$, there exists a subgraph $K$ of $K_N$ such that 
\[
|p^{-e(H)}\mu_H(G) - \mu_H(K)| \leq \epsilon
\]
and, for all pairs of disjoint vertex subsets $U_1, U_2$ of $V(K_N)$,
\[
|p^{-1} \sum_{x, y} G(x,y) - \sum_{x, y} K(x,y)|\leq \e N^2,
\]
where the sums are taken over all $x \in U_1$ and $y \in U_2$.
\end{theorem}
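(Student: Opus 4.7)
The plan is to follow the transference framework developed in \cite{CG12}. The starting point is a dense model theorem: any subgraph $G$ of the pseudorandom graph $G_{N,p}$ admits a dense model $K \subseteq K_N$ that is close to $p^{-1}G$ in a suitable weak norm (the cut norm will do, given condition~(2)), provided $G_{N,p}$ itself is sufficiently pseudorandom relative to $H$.

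First I would establish the key pseudorandomness property: with probability tending to $1$, $G_{N,p}$ satisfies a \emph{relative counting lemma}, which asserts that homomorphism counts of $H$ (and its various ``decorated'' subgraphs) into the normalized indicator $p^{-1}\mathbf{1}_{G_{N,p}}$ are approximately equal to those into the constant function~$1$. For this to hold under the assumption $p \ge CN^{-1/m_2(H)}$, one carries out a second-moment calculation controlling $\EE|N^{-k}\sum p^{-e(H)}\prod_{ij \in E(H)} \mathbf{1}_{G_{N,p}}(x_i,x_j) - 1|$, together with a union bound over which edges of $H$ are restricted to lie in a given subset of vertex pairs. The strictly balanced hypothesis is precisely what guarantees that the dominant variance contribution comes from $H$ itself rather than from some proper subgraph, so the single threshold $p \ge CN^{-1/m_2(H)}$ suffices to bound \emph{all} the relevant moments simultaneously.

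Next I would construct $K$ via a Hahn--Banach / convex duality argument in the style of \cite{G10, RTTV08}. Given $G \subseteq G_{N,p}$, view $p^{-1}G \colon V(K_N)^2 \to [0,p^{-1}]$, and seek $f \colon V(K_N)^2 \to [0,1]$ whose difference with $p^{-1}G$ has small cut norm. If no such $f$ existed, a separating hyperplane would yield a $\pm 1$-valued cut-norm test function $\phi$ with $\sp{p^{-1}G,\phi} - \sup_f \sp{f,\phi} > \epsilon N^2$; but the pseudorandomness of $p^{-1}\mathbf{1}_{G_{N,p}}$ established in the previous step, together with the fact that $G \leq G_{N,p}$, lets one bound the left-hand side by the value of the supremum on the right plus $o(N^2)$, a contradiction. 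Having obtained such an $f$, I would then produce $K$ by independently including each edge $xy$ with probability $f(x,y)$; a Chernoff bound combined with a union bound over pairs of disjoint sets $(U_1,U_2)$ shows that $K$ still approximates $p^{-1}G$ in the cut norm, delivering condition~(2).

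Finally, condition~(1) is obtained by chaining $p^{-e(H)}\mu_H(G) \approx \mu_H(f) \approx \mu_H(K)$. The second approximation follows from concentration of the random sampling. The first and main step is a \emph{sparse counting lemma}: closeness of $p^{-1}G$ and $f$ in the cut norm, together with the pseudorandomness upper bound $p^{-1}G \leq p^{-1}\mathbf{1}_{G_{N,p}}$, forces closeness of $\mu_H$. One proves this by telescoping over the edges of $H$, swapping one factor of $p^{-1}G(x_i,x_j)$ for $f(x_i,x_j)$ at a time, and applying a Cauchy--Schwarz-type inequality at each step to dualize the remaining factors into a test function against which the cut-norm difference is small; the leftover $L^2$-type factors are then absorbed using the relative counting lemma from the first step applied to appropriate subgraphs of $H$. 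The main obstacle is this sparse counting lemma: controlling all the intermediate ``hybrid'' sums uniformly at the threshold $p \sim N^{-1/m_2(H)}$ is exactly where the strictly balanced hypothesis is indispensable, since a non-balanced $H$ would force a worse threshold coming from its densest subgraph.
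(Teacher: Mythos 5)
Your outline follows the general transference template (pseudorandomness of the ambient random graph, a dense model via convex duality, and a relative counting lemma to transfer $H$-counts), which is indeed the architecture of the proof in \cite{CG12}. Note, though, that the paper itself does not reprove this theorem: it cites the one-sided inequality $p^{-e(H)}\mu_H(G)\ge\mu_H(K)-\epsilon$ as Corollary~9.7 of \cite{CG12}, and the surrounding discussion is devoted solely to upgrading it to a two-sided bound via the telescoping decomposition of $\mu_H(g,\dots,g)$ into terms $\mu_H(\ldots,\gamma-g,\ldots)$ together with the observation (by the triangle inequality) that $1-k$ serves as a model for $\gamma-g$.

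The central gap in your argument is the claim that the cut norm ``will do.'' Cut-norm closeness of $p^{-1}G$ to its dense model is essentially an $(\epsilon,p)$-regularity statement, and the entire point of the K\L R conjecture (and of Theorem~\ref{thm:main}) is that $(\epsilon,p)$-regularity together with majorization by the random graph does \emph{not} by itself force the correct $H$-count at the threshold $p\sim N^{-1/m_2(H)}$; it only does so after a probabilistic union bound over bad configurations, which your deterministic convex-duality and telescoping steps never invoke. The paper stresses that \cite{CG12} instead work with a bespoke, $H$-adapted norm $\|\cdot\|$ with the one-sided property that $\|g_i-k_i\|=o(1)$ implies $\mu_H(g_1,\dots,g_{e(H)})\ge\mu_H(k_1,\dots,k_{e(H)})-o(1)$, and the Hahn--Banach step is run against \emph{that} norm. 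In your telescoping step, the object you must dualize $p^{-1}G-f$ against is a product of the remaining hybrid factors, and the cut norm does not interact correctly with this dualization; one needs a box/uniformity-type norm, and controlling the resulting $L^2$-weights of the hybrids near the threshold requires exactly the careful second-moment and ``capped convolution'' machinery that is the technical heart of \cite{CG12} (see the concluding remarks of this paper). A secondary but related gap: your chain $p^{-e(H)}\mu_H(G)\approx\mu_H(f)\approx\mu_H(K)$ asserts a two-sided approximation, but the counting lemma one can actually prove at this threshold is only one-sided, since $p^{-1}G$ is bounded above only by $\gamma$ rather than by a constant, so the hybrid terms with several $p^{-1}G$ factors cannot be absorbed symmetrically. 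The upper bound requires the $(\gamma-g,1-k)$ inclusion--exclusion argument that the paper supplies and your sketch omits.
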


As stated, the result in \cite{CG12} only gives the bound $p^{-e(H)} \mu_H(G) \geq \mu_H(K) - \epsilon$. This is all that is necessary for the applications given in that (and this) paper. However, the bound $p^{-e(H)} \mu_H(G) \leq \mu_H(K) + \epsilon$ also follows from a more careful analysis.

To be more explicit, we need to say a little about the method used in \cite{CG12}. For any collection of functions $h_1, \dots, h_{e(H)}$, we consider the function
\[\mu_H(h_1, \dots, h_{e(H)}) = N^{-k} \sum_{x_1, \dots, x_k} \prod_{ij \in E(H)} h_{ij}(x_i, x_j).\]
Let $\gamma$ be the associated measure of the random graph $G_{N,p}$. We define this as being $\gamma(x,y) = p^{-1}$ if $xy$ is an edge of $G_{N,p}$ and $0$ otherwise. Note that a.a.s.\ the value of $\|\gamma\|_1$ is $1 + o(1)$. Suppose that $k$ and $g$ are functions on the edge set of $G_{N,p}$ with $0 \leq k \leq 1$ and $0 \leq g \leq \gamma$. For example, $g$ could be (and usually is) the characteristic function of a subgraph $G$ of $G_{N,p}$, with each edge weighted by a factor of $p^{-1}$. 

One of the main ideas of \cite{CG12} was to define a norm $\|.\|$ with the property that if $\|g_i - k_i\| = o(1)$ for all $1 \leq i \leq e(H)$ then $\mu_H(g_1, \dots, g_{e(H)}) \geq \mu_H(k_1, \dots, k_{e(H)}) - o(1)$. Note that if $G_i$ is a graph and $g_i$ is the weighted characteristic function $g_i = p^{-1} G_i$, then
\[p^{-e(H)} \mu_H(G_1, \dots, G_{e(H)}) = \mu_H(g_1, \dots, g_{e(H)}) \geq \mu_H(k_1, \dots, k_{e(H)}) - o(1),\]
that is, the functions $k_i$ serve as dense models for the $G_i$ for the purposes of one-sided counting. The main result of \cite{CG12} is that for any function $g_i$ with $0 \leq g_i \leq \gamma$ such a dense model function $k_i$  with $0 \leq k_i \leq 1$ exists. In particular, the constant function $1$ serves as an appropriate model for the function $\gamma$ corresponding to the random graph. 

Suppose that $k$ serves as a model for $g$. Then, by the triangle inequality,
\[\|(\gamma - g) - (1 - k)\| \leq \|\gamma - 1\| + \|g - k\| = o(1),\]
that is $1 - k$ serves as a model for $\gamma - g$. It follows that, for any $1 \leq i \leq e(H)$, 
\begin{equation} \label{eqn:twoside}
\mu_H(g, \dots, g, \gamma - g, \gamma, \dots, \gamma) \geq \mu_H(k, \dots, k, 1 - k, 1, \dots, 1) - o(1),
\end{equation}
where in the $\mu_H$ on the left-hand side the first $i - 1$ terms are $g$, the $i$th term is $\gamma - g$ and the remaining terms are $\gamma$. The same holds for the $\mu_H$ on the right-hand side with $g$ replaced by $k$ and $\gamma$ by $1$. Note that, since $\mu_H$ is additive in each variable,
\begin{align*}
\mu_H(g, \dots, g, g) & = \mu_H(g, \dots, g, \gamma) - \mu_H(g, \dots, g, \gamma - g)\\
& = \mu_H(g, \dots, g, \gamma, \gamma) - \mu_H(g, \dots, g, \gamma - g, \gamma) - \mu_H(g, \dots, g, \gamma - g)\\
& \vdots\\
& =  \mu_H(\gamma, \dots, \gamma, \gamma) - \mu_H(\gamma - g, \dots, \gamma, \gamma) - \cdots - \mu_H(g, \dots, g, \gamma - g).
\end{align*}
Therefore, by \eqref{eqn:twoside}, 
\begin{align*}
\mu_H(g, \dots, g, g) & \leq  \mu_H(1, \dots, 1, 1) - \mu_H(1 - k, \dots, 1, 1) - \cdots - \mu_H(k, \dots, k, 1 - k) + o(1)\\
& = \mu_H(k, \dots, k, k) + o(1).
\end{align*}
That is, $|\mu_H(g) - \mu_H(k)| = o(1)$. Here we used that $\mu_H(\gamma, \dots, \gamma, \gamma) = \mu_H(1, \dots, 1, 1) + o(1)$, which follows from standard tail estimates (see, for example, \cite{JOR04}).

To recover the statement of Theorem \ref{thm:transfer}, where we refer to graphs rather than functions, we let $g = p^{-1} G$. This yields a function $k$ with $0 \leq k \leq 1$ such that $|p^{-1} \mu_H(G) - \mu_H(k)| = o(1)$. If we now choose a graph $K$ randomly by picking each edge $xy$ independently with probability $k(x,y)$, we will a.a.s.\ produce a graph $K$ with $H$-count close to $k$ (see, for example, the proof of Corollary 9.7 in \cite{CG12}). Choosing such a graph, we have $|p^{-e(H)} \mu_H(G) - \mu_H(K)| = o(1)$, as required. 

Because we are dealing with canonical homomorphisms of a graph $H$ with $k$ vertices to a $k$-partite graph $G$ with vertex sets $V_1, \dots, V_k$, it would be quite useful to have another version of Theorem \ref{thm:transfer} which captures this situation. To this end, let $H^*(V_1, \dots, V_k)$ be the class of graphs on vertex set $V_1 \cup \dots \cup V_k$, where $V_1, \dots, V_k$ are disjoint sets, such that the only edges lie between sets $V_i$ and $V_j$ with $ij \in E(H)$. Then, for any $G \in H^*(V_1, \dots, V_k)$, we let 
\[\mu^*_H(G) = N^{-k} \sum_{x_1 \in V_1, \dots, x_k \in V_k} \prod_{ij \in E(H)} G(x_i, x_j)\]
be the normalized count of canonical homomorphisms from $H$ to $G$. The following theorem may be proved by a minor modification of the proof of Theorem \ref{thm:transfer}. 

\begin{theorem} \label{thm:canonicaltransfer}
For any strictly balanced graph $H$ on $k$ vertices and any $\e > 0$, there exist positive constants $C$ and $\lambda$ such that if $C N^{-1/m_2(H)} \leq p \leq \lambda$ then the following holds a.a.s.\ in the random graph $G_{N,p}$. For all disjoint vertex subsets $V_1, \dots, V_k$ and every subgraph $G$ of $G_{N,p}$ in $H^*(V_1, \dots, V_k)$, there exists a subgraph $K$ of $K_N$ in $H^*(V_1, \dots, V_k)$ such that 
\begin{equation} \label{eqn:Happrox}
|p^{-e(H)}\mu^*_H(G) - \mu^*_H(K)| \leq \epsilon
\end{equation}
and, for all pairs of disjoint vertex subsets $U_1, U_2$ of $V(K_N)$,
\begin{equation} \label{eqn:edgeapprox}
|p^{-1} \sum_{x, y} G(x,y) - \sum_{x, y} K(x,y)|\leq \e N^2,
\end{equation}
where the sums are taken over all $x \in U_1$ and $y \in U_2$.
\end{theorem}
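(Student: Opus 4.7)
The plan is to recycle the transference machinery of \cite{CG12} that establishes Theorem \ref{thm:transfer}, adapting it so that the dense model $K$ lives in $H^*(V_1, \ldots, V_k)$ rather than on the full vertex set. The key observation is that the proof of Theorem \ref{thm:transfer} treats each edge of $H$ via its own weighted adjacency function $g_{ij}$ and a separate dense model $k_{ij}$, then combines them via the telescoping identity displayed preceding \eqref{eqn:twoside}; if we force each per-edge model to be supported on the correct bipartite slice, the $k$-partite structure of the target graph is preserved automatically.

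Concretely, for each edge $ij \in E(H)$, I would define functions $g_{ij}, \gamma_{ij} \colon V(K_N)^2 \to \mathbb{R}$ by $g_{ij}(x,y) = p^{-1}G(x,y)$ when $(x,y) \in V_i \times V_j$ and zero otherwise, with $\gamma_{ij}$ defined analogously from $G_{N,p}$. Then $0 \le g_{ij} \le \gamma_{ij} \le \gamma$. Apply the dense model theorem of \cite{CG12} to each $g_{ij}$ to produce $k_{ij} \colon V(K_N)^2 \to [0,1]$ with $\|g_{ij} - k_{ij}\|$ small in the norm of \cite{CG12}. Truncate each $k_{ij}$ to $\tilde k_{ij} = k_{ij} \cdot \mathbf{1}_{V_i \times V_j}$; since the norm in \cite{CG12} is cut-type and essentially contractive under multiplication by indicators of product sets, and since $g_{ij}$ already vanishes outside $V_i \times V_j$, the bound $\|g_{ij} - \tilde k_{ij}\|$ remains small. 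Assemble the truncated per-edge models into a single function $k \colon V(K_N)^2 \to [0,1]$ by setting $k(x,y) = \tilde k_{ij}(x,y)$ when $x \in V_i$ and $y \in V_j$ for some $ij \in E(H)$, and $k(x,y) = 0$ otherwise.

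Now I would rerun the telescoping two-sided counting argument from the excerpt, with $\mu^*_H$ in place of $\mu_H$ and with the per-edge decomposition in place of the single function $g$. Each factor of $\mu^*_H(G) = \mu^*_H(g_{e_1}, \ldots, g_{e_m})$ already lives on the appropriate bipartite slice, so the hybrid expansion in terms of $g_{ij}$, $\gamma_{ij} - g_{ij}$, and $\gamma_{ij}$ is unchanged; the counting lemma attached to $\|\cdot\|$ then matches each hybrid count with the analogous one for $\tilde k_{ij}$, $\mathbf{1}_{V_i \times V_j} - \tilde k_{ij}$, and $\mathbf{1}_{V_i \times V_j}$ up to $o(1)$, using also that $\mu^*_H(\gamma_{e_1}, \ldots, \gamma_{e_m}) = \mu^*_H(\mathbf{1}_{V_{i_1} \times V_{j_1}}, \ldots) + o(1)$ a.a.s.\ by standard tail estimates \cite{JOR04}. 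The telescoping yields $|p^{-e(H)}\mu^*_H(G) - \mu^*_H(k)| = o(1)$. Sampling $K$ by including each edge $xy$ independently with probability $k(x,y)$ produces an element of $H^*(V_1, \ldots, V_k)$ by construction of $k$, and a standard concentration argument (exactly as for Corollary~9.7 in \cite{CG12}) gives $\mu^*_H(K) \approx \mu^*_H(k)$ a.a.s., delivering \eqref{eqn:Happrox}. For \eqref{eqn:edgeapprox}, the difference $p^{-1}\sum_{x \in U_1, y \in U_2} G(x,y) - \sum_{x \in U_1, y \in U_2} K(x,y)$ decomposes across the slices $(U_1 \cap V_i) \times (U_2 \cap V_j)$ for $ij \in E(H)$, on each of which the cut-like content of $\|g_{ij} - \tilde k_{ij}\|$ supplies the required bound (together with a standard concentration step comparing $K$ to its expectation $k$).

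The main obstacle is the truncation step: one must verify that restricting $k_{ij}$ to $V_i \times V_j$ does not destroy closeness to $g_{ij}$ in the particular norm used in \cite{CG12}. For the elementary cut norm this is immediate, but the norm in \cite{CG12} is more elaborate, so a cleaner alternative is to incorporate the support constraint directly into the Hahn-Banach/min-max argument that proves the dense model theorem, replacing the reference function $\mathbf{1}$ by $\mathbf{1}_{V_i \times V_j}$ throughout. Everything else is a bookkeeping rerun of Theorem \ref{thm:transfer}, with the enlarged index set so that each edge of $H$ carries its own triple $(g_{ij}, \gamma_{ij}, k_{ij})$ tied to the bipartite slice $V_i \times V_j$.
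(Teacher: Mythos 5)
Your outline is essentially the same approach the paper has in mind: the paper simply asserts that Theorem~\ref{thm:canonicaltransfer} follows by a ``minor modification of the proof of Theorem~\ref{thm:transfer},'' and you have correctly unpacked that into the per-edge decomposition $g_{ij} = p^{-1}G\cdot\mathbf{1}_{V_i\times V_j}$ with $\gamma_{ij} = \gamma\cdot\mathbf{1}_{V_i\times V_j}$, a dense model on each slice, the telescoping hybrid argument already sketched around \eqref{eqn:twoside}, and the random rounding of $k$ to $K\in H^*(V_1,\dots,V_k)$ as in Corollary~9.7 of \cite{CG12}. You also correctly flag the one non-trivial point — that the \cite{CG12} norm need not obviously contract under restriction to a product set — and your alternative (building the constraint $\mathrm{supp}(k_{ij})\subseteq V_i\times V_j$ into the min-max/Hahn--Banach step of the dense model theorem, using $\mathbf{1}_{V_i\times V_j}$ as the reference function in place of $\mathbf{1}$) is the cleaner route and is what ``minor modification'' amounts to here.
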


In proving part (ii) of Theorem~\ref{thm:main}, we will use the following slight variant of the dense counting lemma, Lemma \ref{lemma:H-count}. We let $\G(H,n,m,p,\theta,\epsilon)$ be defined in exactly the same way as $\mathcal{G}(H,n,m,p,\epsilon)$, except we now allow the number of edges between each pair $V_i$ and $V_j$ to be $m \pm \theta p n^2$.

\begin{lemma}
\label{lemma:densecount}
For every graph $H$ and every $\delta > 0$, there exist $\theta, \e > 0$ and an integer $n_0$ such that for every $n \geq n_0$, every $m$, and every $G \in \mathcal{G}(H,n, m, 1, \theta, \e)$,
\begin{equation*}
G(H) = \left(\frac{m}{n^2}\right)^{e(H)} n^{v(H)} \pm \delta n^{v(H)}.
\end{equation*}
\end{lemma}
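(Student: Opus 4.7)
The plan is to reduce Lemma~\ref{lemma:densecount} to the standard dense counting lemma (Lemma~\ref{lemma:H-count}) by a simple continuity argument for products of densities. Given $\delta > 0$, first apply Lemma~\ref{lemma:H-count} with error parameter $\delta/2$ in place of $\delta$ to obtain $\e_0 > 0$ and $n_0' \in \mathbb{N}$; set $\e := \e_0$ and $n_0 := n_0'$, and choose $\theta := \delta/(2e(H))$.

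Let $G \in \mathcal{G}(H,n,m,1,\theta,\e)$ with $n \geq n_0$. Since $p=1$, the condition of $(\e,1)$-regularity coincides with ordinary $\e$-regularity, so each pair $(V_i,V_j)$ with $ij \in E(H)$ is $\e$-regular and, by the definition of $\mathcal{G}(H,n,m,1,\theta,\e)$, its density $d_{ij} = m_{ij}/n^2$ satisfies $|d_{ij} - m/n^2| \leq \theta$. Applying Lemma~\ref{lemma:H-count} directly yields
\[
G(H) = n^{v(H)}\paren{\prod_{ij \in E(H)} d_{ij} \pm \delta/2}.
\]

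It remains to compare the product $\prod d_{ij}$ with $(m/n^2)^{e(H)}$. Since all of $d_{ij}$ and $m/n^2$ lie in $[0,1]$ and each $d_{ij}$ differs from $m/n^2$ by at most $\theta$, the telescoping identity
\[
\prod_{j=1}^{e(H)} d_{j} - p^{e(H)} = \sum_{j=1}^{e(H)} \paren{\prod_{i<j} d_i}(d_j - p) p^{e(H)-j}
\]
(with $p = m/n^2$) together with the bound $|d_j - p| \leq \theta$ gives
\[
\abs{\prod_{ij \in E(H)} d_{ij} - \paren{\frac{m}{n^2}}^{e(H)}} \leq e(H) \cdot \theta = \delta/2.
\]
Combining the two estimates yields $G(H) = (m/n^2)^{e(H)} n^{v(H)} \pm \delta n^{v(H)}$, as required.

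There is essentially no obstacle: the only step that requires any thought is choosing $\theta$ small enough to absorb the perturbation of the density product, and Lemma~\ref{lemma:H-count} handles the rest. The statement is best viewed simply as a continuity-in-$m$ wrapper around the standard dense counting lemma.
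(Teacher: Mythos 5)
Your proof is correct and matches the paper's (implicit) intent: the paper introduces Lemma~\ref{lemma:densecount} as a ``slight variant of the dense counting lemma'' and leaves the reduction unstated, and your argument—invoking Lemma~\ref{lemma:H-count} with error $\delta/2$, noting that $(\e,1)$-regularity is ordinary $\e$-regularity, and absorbing the $\pm\theta$ density perturbation via the telescoping bound $\left|\prod d_{ij} - (m/n^2)^{e(H)}\right| \leq e(H)\theta$—is exactly the standard deduction.
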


Part (ii) of Theorem~\ref{thm:main} is now a relatively easy corollary of Theorem \ref{thm:canonicaltransfer}. 

\vspace{2mm} \noindent
{\it Proof of part (ii) of Theorem \ref{thm:main}.}
Let $H$ be a fixed graph on $k$ vertices and $d, \delta$ fixed positive constants.  Choose $\theta$ and $\e_{\ref{lemma:densecount}}$ so that the conclusion of Lemma~\ref{lemma:densecount} holds with $\delta_{\ref{lemma:densecount}} = \frac{d^{e(H)} \delta}{4}$. We let $\e = \frac{\e_{\ref{lemma:densecount}}}{6}$ and, for a fixed positive constant $\eta$,  
\[\e_{\ref{thm:canonicaltransfer}} = \min\left(\frac{\eta^2 \e_{\ref{lemma:densecount}}^3}{3}, \eta^2 \theta, \frac{d^{e(H)}\delta}{4}\right)\]
and then choose $C$ and $\lambda$ so that the conclusion of Theorem \ref{thm:canonicaltransfer} holds with $\e_{\ref{thm:canonicaltransfer}}$. 

Suppose now that $C N^{-1/m_2(H)} \leq p \leq \lambda$ and $G_{N,p}$ satisfies the conclusion of Theorem \ref{thm:canonicaltransfer}. Let $G$ be a subgraph of $G_{N,p}$ from the set $\G(H,n,m,p, 2\e)$ with vertex sets $V_1, \dots, V_k$, each of size $n \geq \eta N$. Let $K$ be the dense graph given by Theorem \ref{thm:canonicaltransfer}. If $V_i' \subseteq V_i$ and $V'_j \subseteq V_j$ the triangle inequality tells us that $|d_K(V'_i, V'_j) - d_K(V_i, V_j)|$ is at most 
\[|d_K(V'_i, V'_j) - p^{-1} d_G(V'_i, V'_j)|  + p^{-1} |d_G(V'_i, V'_j)  - d_G(V_i, V_j)| + |p^{-1} d_G(V_i, V_j) - d_K(V_i, V_j)|.\]
Suppose that $|V_i'| \geq \e_{\ref{lemma:densecount}} |V_i|$ and $|V_j'| \geq \e_{\ref{lemma:densecount}} |V_j|$. Then, since $G$ is $(2\e, p)$-regular between $V_i$ and $V_j$ and $\e_{\ref{lemma:densecount}} \geq 2 \e$, the middle term is at most $2\e$. By \eqref{eqn:edgeapprox}, since $V'_i$ and $V'_j$ are disjoint sets of size at least $\e_{\ref{lemma:densecount}} \eta N$, the first and third terms are each at most $(\e_{\ref{lemma:densecount}} \eta)^{-2} \e_{\ref{thm:canonicaltransfer}}$. We therefore see that
\[|d_K(V'_i, V'_j) - d_K(V_i, V_j)| \leq 2 \e + 2 (\e_{\ref{lemma:densecount}} \eta)^{-2} \e_{\ref{thm:canonicaltransfer}} \leq \e_{\ref{lemma:densecount}}.\]
Therefore, $K$ is $\e_{\ref{lemma:densecount}}$-regular. Note also that the number of edges between $V_i$ and $V_j$ in $K$ is 
\[p^{-1} m \pm \e_{\ref{thm:canonicaltransfer}} N^2 = p^{-1} m \pm \theta n^2,\]
since $\e_{\ref{thm:canonicaltransfer}} \leq \eta^2 \theta$. Applying Lemma \ref{lemma:densecount}, we see that for $n$ sufficiently large
\[K(H) = \left(\frac{p^{-1} m}{n^2}\right)^{e(H)} n^{v(H)} \pm \delta_{\ref{lemma:densecount}} n^{v(H)} = \left(1 \pm \frac{\delta}{4}\right) \left(\frac{p^{-1} m}{n^2}\right)^{e(H)} n^{v(H)},\]
where we used that $m \geq d p n^2$. Therefore, by \eqref{eqn:Happrox}, 
\begin{eqnarray} \label{eqn:G-bound}
\nonumber
G(H) = p^{e(H)} K(H) \pm \e_{\ref{thm:canonicaltransfer}} p^{e(H)} n^{v(H)} & = &  \left(1 \pm \frac{\delta}{4}\right) \left(\frac{m}{n^2}\right)^{e(H)} \pm \e_{\ref{thm:canonicaltransfer}}  p^{e(H)} n^{v(H)}\\
& = & \left(1 \pm \frac{\delta}{2}\right) \left(\frac{m}{n^2}\right)^{e(H)} n^{v(H)},
\end{eqnarray}
where we used that $\e_{\ref{thm:canonicaltransfer}} \leq \frac{d^{e(H)}\delta}{4}$. 

Suppose now that $p > \lambda$ and fix a $G \subseteq G_{N,p}$ in $\G(H,n,m,p,\e)$ with $n \geq \eta N$ and $m \geq d p n^2$. Form a random subgraph $G'$ of $G$ by choosing $m' = \lambda m$ edges in each pair $G(V_i, V_j)$ uniformly at random. Clearly,
  \begin{equation}
    \label{eq:EE-G'H2}
    \EE[G'(H)] = \lambda^{e(H)} G(H).
  \end{equation}

  A standard application of Hoeffding's inequality (see~\cite[Lemma~4.3]{GS05}) proves that with probability at least 
$1 - \exp(-cm') \geq 1 - \delta/2$, the graph $G'$ is in $\G(H,n,m',p',2\e)$, where $p' = \lambda p$, and hence, by~\eqref{eqn:G-bound} and~\eqref{eq:EE-G'H2},
  \[
  G(H) \geq \lambda^{-e(H)} \left(1-\frac{\delta}{2}\right)^2 \left(\frac{m'}{n^2}\right)^{e(H)} n^{v(H)} \geq (1-\delta)\left(\frac{m}{n^2}\right)^{e(H)}n^{v(H)}.
  \]
 We may assume, since it happens a.a.s., that the total number of copies of $H$ in $G_{N,p}$ does not exceed $2p^{e(H)}N^{v(H)}$ (see, for example, \cite{JOR04}). Therefore,
 \begin{align*}
  G(H) \leq \lambda^{-e(H)} \cdot [\Pr\bigl( G' \in \G(H,n,m',p',2\e) \bigr) \cdot  & \left(1+ \frac{\delta}{2}\right)\left(\frac{m'}{n^2}\right)^{e(H)}n^{v(H)}\\
  & + \Pr\bigl( G' \not\in \G(H,n,m',p',2\e) \bigr) \cdot 2p^{e(H)}N^{v(H)}].
  \end{align*}
  Since $n \geq \eta N$, $m \geq dpn^2$ and $\Pr(G' \not\in \G(H,n,m',p',2\e)) \leq \exp(-cm') \leq \delta (\lambda d)^{e(H)} \eta^{v(H)}/4$ if $n$ is sufficiently large, it follows that
  \[
  G(H) \leq (1+\delta)\left( \frac{m}{n^2} \right)^{e(H)} n^{v(H)}.
  \]
The result follows. \qed

\vspace{2mm}
We note that the second case, where $p > \lambda$, also follows as an immediate corollary of the counting lemma for pseudorandom graphs proved in \cite{CFZ12}. Indeed, this result is already strong enough to imply a counting lemma down to densities of about $N^{-c/\Delta(H)}$, where $\Delta(H)$ is the maximum degree of $H$. In addition, if one is only interested in one-sided counting, that is, in showing that $G(H) \geq (1 - \delta) (m/n^2)^{e(H)} n^{v(H)}$, then the results of \cite{CFZ12} apply for $p \geq N^{-c/d(H)}$, where $d(H)$ is the degeneracy of $H$.

\section{Applications}

\subsection{Preliminaries}

After applying the sparse regularity lemma, it is usually helpful to clean up the the regular partition, removing all edges which are not contained in a dense regular pair. The following standard lemma, which incorporates both the regularity lemma and this cleaning process, is sufficient for our purposes. Recall that a graph is $(\eta, p, D)$-upper-uniform if for all disjoint subsets $U_1$ and $U_2$ with $|U_1|, |U_2| \geq \eta |V(G)|$, the density of edges between $U_1$ and $U_2$ satisfies $d(U_1, U_2) \leq D p$. Here, for the sake of clarity of presentation, we make the additional assumption in this definition that if $U_1 = U_2 = U$, then $e_G(U) \leq D p \binom{|U|}{2}$.

\begin{proposition} \label{prop:sparseregclean}
  For every $\e, D > 0$ and every positive integer $t_0$, there exist $\eta > 0$ and a positive integer $T$ such that, for every $d > 0$, every graph $G$ with at least $t_0$ vertices which is $(\eta, p, D)$-upper-uniform contains a subgraph $G'$ with
  \[
  e(G') \geq e(G) - \left( \frac{D}{t_0} + 2D\e + d\right) \frac{pn^2}{2}
  \]
  that admits an equipartition $V_1, \dots, V_t$ of its vertex set into $t_0 \leq t \leq T$ pieces such that the following conditions hold.
  \begin{enumerate}
  \item
    There are no edges of $G'$ within $V_i$ for any $1 \leq i \leq t$.
  \item
    Every non-empty graph $G'(V_i,V_j)$ is $(\e,p)$-regular and has at least $dp|V_i||V_j|$ edges.
  \end{enumerate}
\end{proposition}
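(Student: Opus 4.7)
The plan is to combine Theorem~\ref{thm:sparsereg} with a three-stage cleaning of the resulting partition. Given $\e, D$, and $t_0$, I first apply Theorem~\ref{thm:sparsereg} with parameters $\e_{\ref{thm:sparsereg}} = \e$ and $t_0$ to obtain a positive integer $T$ and a constant $\eta_{\ref{thm:sparsereg}}>0$. Then I set $\eta = \min\{\eta_{\ref{thm:sparsereg}}, 1/(2T)\}$, so that any part of any equipartition into at most $T$ pieces has size at least $\eta n$, ensuring that the upper-uniformity bound $d(U_1,U_2) \le Dp$ applies to any two such parts (and to any single part, via the additional convention that $e_G(U)\le Dp\binom{|U|}{2}$).

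Given any $(\eta,p,D)$-upper-uniform graph $G$ on $n\ge t_0$ vertices, Theorem~\ref{thm:sparsereg} produces an $(\e,p)$-regular equipartition $V_1,\dots,V_t$ with $t_0\le t\le T$. I then obtain $G'$ from $G$ by removing the edges in the following three sets:
\begin{enumerate}
\item[(a)] edges lying inside some part $V_i$;
\item[(b)] edges lying in a pair $(V_i,V_j)$ which is not $(\e,p)$-regular;
\item[(c)] edges lying in a regular pair $(V_i,V_j)$ whose density is less than $dp$.
\end{enumerate}
By construction, $G'$ then satisfies the two conditions of the conclusion: there are no edges inside any $V_i$, and every non-empty $G'(V_i,V_j)$ is $(\e,p)$-regular with at least $dp|V_i||V_j|$ edges.

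It remains to bound $e(G)-e(G')$. Each $V_i$ has size at most $\lceil n/t\rceil$, so by upper-uniformity the edges in (a) number at most $\sum_{i=1}^t Dp\binom{|V_i|}{2} \le Dpn^2/(2t) \le Dpn^2/(2t_0)$. By the definition of an $(\e,p)$-regular partition there are at most $\e t^2$ irregular pairs, and each contributes at most $Dp|V_i||V_j| \le Dp(n/t)^2$ edges by upper-uniformity (applicable since $|V_i|,|V_j|\ge \eta n$); hence (b) contributes at most $\e t^2 \cdot Dp(n/t)^2 = D\e p n^2$ edges. Finally, there are at most $\binom{t}{2}$ sparse pairs in (c), each with fewer than $dp|V_i||V_j|\le dp(n/t)^2$ edges, giving at most $dpn^2/2$ edges in total. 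Summing the three contributions yields $e(G)-e(G') \le \bigl(D/t_0 + 2D\e + d\bigr)pn^2/2$, as required.

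No step here is hard; the only subtle point is arranging $\eta$ small enough that upper-uniformity can be invoked at the scale of an individual part (hence $\eta \le 1/T$), and handling intra-part edges via the stipulation made just before the statement that upper-uniformity controls $e_G(U)$ when $U_1=U_2=U$. The constants $T$ and $\eta$ come directly from Theorem~\ref{thm:sparsereg}, and the numerical bound on $e(G)-e(G')$ is exactly what the three-term sum above produces, matching the stated inequality.
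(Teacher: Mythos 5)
Your proposal is correct and takes essentially the same approach as the paper: apply Theorem~\ref{thm:sparsereg}, set $\eta = \min\{\eta_{\ref{thm:sparsereg}}, 1/(2T)\}$ so upper-uniformity applies at the scale of a single part, delete edges inside parts, in irregular pairs, and in sparse pairs, and bound the three contributions by $Dpn^2/(2t_0)$, $D\e pn^2$, and $dpn^2/2$, respectively.
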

\begin{proof}
  Fix $\e$, $D$, and $t_0$ as in the statement of the proposition and let $T = T_{\ref{thm:sparsereg}}(\e, D, t_0)$ and $\eta = \min\{\eta_{\ref{thm:sparsereg}}(\e, D, t_0), \frac{1}{2T}\}$. Let $d > 0$, fix a $G$ as above, and apply the sparse regularity lemma, Theorem~\ref{thm:sparsereg}, to obtain an $(\e,p)$-regular partition $V_1, \ldots, V_t$ of the vertices of $G$ into $t_0 \leq t \leq T$ pieces. Let us delete from $G$ all edges that are contained in:
\begin{itemize}
\item
  one of the sets $V_1, \ldots, V_t$,
\item
  one of the at most $\e t^2$ pairs $(V_i, V_j)$ that are not $(\e, p)$-regular, or
\item
  one of the pairs $(V_i, V_j)$ that have fewer than $d p |V_i| |V_j|$ edges.
\end{itemize}
Denote the resulting graph by $G'$. Since $G$ is $(\eta, p, D)$-upper-uniform, we have $e_G(V_i) \leq Dp\frac{1}{2}(\frac{n}{t})^2$ and $e_G(V_i,V_j) \leq Dp(\frac{n}{t})^2$ for all $i$ and $j$. It follows that
\[
e(G) - e(G') \leq t \cdot Dp\frac{1}{2}\left(\frac{n}{t}\right)^2 + \e t^2 \cdot Dp\left(\frac{n}{t}\right)^2 + \binom{t}{2} \cdot d p \left(\frac{n}{t}\right)^2 \leq \left( \frac{D}{t_0} + 2D\e + d \right) \frac{pn^2}{2}.
\qedhere
\]
\end{proof}

In the proofs of our applications, we will need a version of the main theorem which allows us to have different densities between different pairs of vertex sets. To this end, given a graph $H$ on the vertex set $\{1, \ldots, k\}$ and a sequence $\bm = (m_{ij})_{ij \in E(H)}$ of integers, we denote by $\G(H,n, \bm, p, \e)$ the collection of all graphs $G$ obtained in the following way. The vertex set of $G$ is a disjoint union $V_1 \cup \ldots \cup V_k$ of sets of size $n$.  For each edge $ij \in E(H)$, we add to $G$ an $(\e,p)$-regular bipartite graph with $m_{ij}$ edges between the pair $(V_i, V_j)$. These are the only edges of $G$. As before, for any $G \in \G(H, n, \bm, p, \e)$, let us denote by $G(H)$ the number of canonical copies of $H$ in $G$.

\begin{proposition}
  \label{prop:main}
  For every graph $H$ and every $\delta, d > 0$, there exist $\e, \xi > 0$ with the following property. For every $\eta > 0$, there is a $C > 0$ such that if $p \geq C N^{-1/m_2(H)}$ then a.a.s.\ the following holds in $G_{N,p}$:
  \begin{enumerate}
  \item[(i)]
    For every $n \geq \eta N$, $\bm$ with $m_{ij} \geq d p n^2$ for all $ij \in E(H)$ and every subgraph $G$ of $G_{N,p}$ in $\G(H,n,\bm,p, \e)$,
    \begin{equation} 
      G(H) \geq \xi \left( \prod_{ij \in E(H)}\frac{m_{ij}}{n^2} \right) n^{v(H)}.
    \end{equation}
    
  \item[(ii)]
    Moreover, if $H$ is strictly balanced, that is, if $m_2(H) > m_2(H')$ for every proper subgraph $H'$ of $H$, then 
    \begin{equation} \label{eqn:exactcountprime}
      G(H) = (1 \pm \delta) \left( \prod_{ij \in E(H)}\frac{m_{ij}}{n^2} \right) n^{v(H)}.
    \end{equation}
        
  \end{enumerate}
\end{proposition}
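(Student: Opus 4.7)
My plan is to deduce Proposition~\ref{prop:main} from Theorem~\ref{thm:main} by a random sparsification argument that equalizes the edge densities across all pairs. Set $m = \min_{ij \in E(H)} m_{ij}$, so $m \geq dpn^2$. Given $G \in \G(H, n, \bm, p, \e)$, form a random subgraph $G' \subseteq G$ by including each edge in the pair $(V_i, V_j)$ independently with probability $q_{ij} = m/m_{ij} \in (0,1]$. Each canonical copy of $H$ in $G$ survives in $G'$ independently with probability $\prod_{ij \in E(H)} q_{ij} = m^{e(H)}/\prod_{ij} m_{ij}$, which yields the deterministic identity
\[
\mathbb{E}[G'(H)] = G(H) \cdot \frac{m^{e(H)}}{\prod_{ij \in E(H)} m_{ij}}.
\]
This identity is the engine that will convert counting statements about $G'(H)$ into bounds on $G(H)$.

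The first step is to show that, with probability $1 - \exp(-\Omega(pn^2))$ over the sparsification, $G'$ lies in $\G(H, n, m', p, \e_1)$ for some $m' = (1 \pm o(1)) m$ and the $\e_1$ supplied by Theorem~\ref{thm:main} applied with parameters $H$, $\delta/2$, $d/2$. I choose the $\e$ in Proposition~\ref{prop:main} to be $\e_1/2$. By Chernoff's inequality each pair retains $(1 \pm o(1))m$ edges except on an event of probability $\exp(-\Omega(pn^2))$. For the regularity condition, apply Chernoff to $e_{G'}(U_i, U_j)$ for each pair of subsets $U_i \subseteq V_i$, $U_j \subseteq V_j$ with $|U_i|, |U_j| \geq \e_1 n$, and take a union bound over the at most $2^{2n}$ such pairs; the assumption $p \geq CN^{-1/m_2(H)}$ guarantees that the sparsification fluctuation is $o(p)|U_i||U_j|$, so the $(\e,p)$-regularity of $G$ degrades by at most a factor of two and $G'$ is $(\e_1, p)$-regular in each pair.

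The second step is to apply Theorem~\ref{thm:main} to $G' \subseteq G_{N,p}$ on the event that $G' \in \G(H, n, m', p, \e_1)$, which is legitimate since $G_{N,p}$ a.a.s.\ satisfies the hypothesis of Theorem~\ref{thm:main}. In case (i) this yields $G'(H) \geq \xi_1 (m/n^2)^{e(H)} n^{v(H)}$, and in case (ii) with $H$ strictly balanced it yields $G'(H) = (1 \pm \delta/2)(m/n^2)^{e(H)} n^{v(H)}$. Taking expectations and using the identity above, the lower bound in (i) follows immediately with $\xi = \xi_1 / 2$, where the factor of two absorbs the $\exp(-\Omega(pn^2))$ failure probability (using that $G'(H) \geq 0$ always). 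For the two-sided estimate in (ii), the trivial bound $G'(H) \leq n^{v(H)}$ on the failure event contributes at most $\exp(-\Omega(pn^2)) n^{v(H)}$ to $\mathbb{E}[G'(H)]$, which is negligible relative to the main term $(m/n^2)^{e(H)} n^{v(H)} \geq (dp)^{e(H)} n^{v(H)}$ since $p \geq CN^{-1/m_2(H)}$ and $pn \to \infty$; this yields $G(H) = (1 \pm \delta) \prod_{ij}(m_{ij}/n^2) \, n^{v(H)}$.

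The argument is a routine reduction; the main obstacle is purely bookkeeping---verifying that sparsification preserves $(\e,p)$-regularity up to a constant factor and that the interlocking parameters $\e, \e_1, \xi, \xi_1, \delta, d$ are chosen compatibly. No new conceptual input beyond Theorem~\ref{thm:main} is needed, and the case $e(H) \le 1$ (when $m_2(H) < 1$) is trivial since then there is nothing to count.
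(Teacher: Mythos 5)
Your overall strategy — random sparsification to reduce the unequal-density case $\G(H,n,\bm,p,\e)$ to the equal-density case $\G(H,n,m,p,\cdot)$ covered by Theorem~\ref{thm:main}, then using the exact expectation identity to transfer the count back — is exactly the approach the paper takes, and the identity $\EE[G'(H)] = G(H)\prod_{ij}(m/m_{ij})$, the Hoeffding/Chernoff preservation of $(\cdot,p)$-regularity, and the use of the trivial $\leq n^{v(H)}$ bound on the exponentially-rare failure event are all sound. However, there is one genuine flaw in the execution: you sparsify by including each edge of the pair $(V_i,V_j)$ independently with probability $q_{ij}=m/m_{ij}$. Under this Bernoulli scheme the number of surviving edges between $V_i$ and $V_j$ is $\mathrm{Bin}(m_{ij},q_{ij})$, which is \emph{different} random variable for different pairs $ij$, and with probability $1-o(1)$ no two pairs have the same number of edges. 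So $G'$ is essentially never in $\G(H,n,m',p,\e_1)$ for a \emph{single} $m'$, which is exactly what Theorem~\ref{thm:main} requires (it has one parameter $m$ governing every pair). Claiming ``$G'$ lies in $\G(H,n,m',p,\e_1)$ for some $m'=(1\pm o(1))m$'' silently assumes away the very problem the sparsification was meant to solve.

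The paper avoids this by sampling a uniformly random $m$-subset of the edges of each pair (with $m=dpn^2$) instead of independent Bernoulli selection. This guarantees $G'$ has exactly $m$ edges between each pair, so $G'\in\G(H,n,m,p,2\e)$ with probability $1-\exp(-cm)$ (regularity is preserved by the cited Hoeffding estimate), while the key identity $\EE[G'(H)] = G(H)\prod_{ij\in E(H)} m/m_{ij}$ still holds exactly, because a canonical copy of $H$ uses exactly one edge from each pair and the $e(H)$ fixed-size samples are independent across pairs. With that change your argument goes through; alternatively you would need a version of Theorem~\ref{thm:main} stated for $\G(H,n,m,p,\theta,\e)$ with a tolerance $\theta$ in the edge counts, but no such version is proved in the paper. (Two small further points: the paper reduces out the case $\Delta(H)\le 1$, which is slightly broader than your $e(H)\le 1$, e.g.\ a perfect matching on $\ge 4$ vertices; and for the upper bound in (ii) the paper uses the a.a.s.\ bound of $2p^{e(H)}N^{v(H)}$ on the total $H$-count in $G_{N,p}$, but your crude $n^{v(H)}$ also suffices since the failure probability is $\exp(-\Omega(pn^2))$.)
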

\begin{proof}
  Fix $H$, $\delta$, and $d$ as in the statement of the proposition. We may assume that $\Delta(H) \ge 2$ (and hence $m_2(H) \ge 1$) as otherwise the assertion of the proposition is trivial.  Let $\e = \e_{\ref{thm:main}}(H, \delta/2, d)/2$ and $\xi = \xi_{\ref{thm:main}}(H, \delta/2, d)/2$. Moreover, fix some $\eta > 0$, let $C$ be a sufficiently large positive constant, and suppose that $p \geq CN^{-1/m_2(H)}$. First, we will show that if $G_{N,p}$ satisfies part~(i) of Theorem~\ref{thm:main}, which happens a.a.s., then it also satisfies part~(i) of Proposition~\ref{prop:main}. To this end, let $n \geq \eta N$, let $\bm$ satisfy $m_{ij} \geq dpn^2$ for all $ij \in E(H)$, and fix a $G \subseteq G_{N,p}$ in $\G(H,n,\bm,p,\e)$. Form a random subgraph $G'$ of $G$ by choosing $m = dpn^2$ edges in each pair $G(V_i, V_j)$ uniformly at random. Clearly,
  \begin{equation}
    \label{eq:EE-G'H}
    \EE[G'(H)] = \left( \prod_{ij \in E(H)} \frac{m}{m_{ij}} \right) \cdot G(H).
  \end{equation}
  A standard application of Hoeffding's inequality (see~\cite[Lemma~4.3]{GS05}) proves that with probability at least $1 - \exp(-cm)$, where $c > 0$ is an absolute constant, the graph $G'$ is in $\G(H,n,m,p,2\e)$. Hence, if $G_{N,p}$ satisfies part~(i) of Theorem~\ref{thm:main}, then with probability at least $1/2$,
  \[
  G'(H) \geq 2\xi \left(\frac{m}{n}^2\right)^{e(H)} n^{v(H)}
  \]
  and therefore~\eqref{eq:EE-G'H} implies that
  \[
  G(H) = \left( \prod_{ij \in E(H)} \frac{m_{ij}}{m} \right) \cdot \EE[G'(H)] \geq \left( \prod_{ij \in E(H)} \frac{m_{ij}}{m} \right) \cdot \xi \left(\frac{m}{n^2}\right)^{e(H)} n^{v(H)},
  \]
  as claimed.

  Now, we show that if $G_{N,p}$ satisfies part~(ii) of Theorem~\ref{thm:main} and the total number of copies of $H$ in $G_{N,p}$ does not exceed $2p^{e(H)}N^{v(H)}$, which happens a.a.s.\ (see, for example, \cite{JOR04}), then part~(ii) of this proposition is also satisfied. To this end, fix $\bm$ and $G$ as above, recall the definition of $G'$, and observe that since with probability at least $1 - \delta/2$, the graph $G'$ is in $\G(H,n,m,p,2\e)$, then by~\eqref{eqn:exactcount} and~\eqref{eq:EE-G'H},
  \[
  G(H) \geq \left( \prod_{ij \in E(H)} \frac{m_{ij}}{m} \right) \cdot \left(1-\frac{\delta}{2}\right)^2 \left(\frac{m}{n^2}\right)^{e(H)} n^{v(H)} \geq (1-\delta)\left(\prod_{ij \in E(H)} \frac{m_{ij}}{n^2}\right)n^{v(H)}.
  \]
  On the other hand, we also have
 \begin{align*}
  G(H) \leq \left(\prod_{ij \in E(H)} \frac{m_{ij}}{m} \right) \cdot [\Pr\bigl( G' \in \G(H,n,m,p,2\e) \bigr) \cdot  & \left(1+ \frac{\delta}{2}\right)\left(\frac{m}{n^2}\right)^{e(H)}n^{v(H)}\\
  & + \Pr\bigl( G' \not\in \G(H,n,m,p,2\e) \bigr) \cdot 2p^{e(H)}N^{v(H)}].
  \end{align*}
  Since $n \geq \eta N$, $m \geq dpn^2$ and $\Pr(G' \not\in \G(H,n,m,p,2\e)) \leq \exp(-cm) \leq \delta d^{e(H)} \eta^{v(H)}/4$ if $n$ is sufficiently large, it follows that
  \[
  G(H) \leq (1+\delta)\left(\prod_{ij \in E(H)} \frac{m_{ij}}{n^2} \right) n^{v(H)}.
  \qedhere
  \]
\end{proof}

\subsection{The sparse removal lemma}

Let $\delta > 0$ and let $H$ be an arbitrary (not necessarily balanced) graph. The proof of Theorem~\ref{thm:removal-Gnp} is a classical application of the regularity method. We start by defining a range of constants. For the sake of brevity, we let $k = v(H)$. Furthermore, let $d = \delta/2$, $t_0 = 2/d$, $D = 2$, $\e' = \min\{\e_{\ref{prop:main}}(H,d/2), \delta/8\}$, $\xi = \xi_{\ref{prop:main}}(H,d/2)$,  $T = T_{\ref{prop:sparseregclean}}(\e'/k, D, t_0)$, and $\eta = \min\{\eta_{\ref{prop:sparseregclean}}(\e'/k, D, t_0), 1/(kT)\}$. Finally, let $\e = \xi (d/2)^{e(H)} (kT)^{-v(H)}$ and $C = C_{\ref{prop:main}}(H, d/2, \eta)$. Suppose that $p \geq Cn^{-1/m_2(H)}$. We will show that the sparse $H$-removal lemma holds in $G_{n,p}$ a.a.s., that is, that every subgraph of $G_{n,p}$ with fewer than $\e p^{e(H)} n^{v(H)}$ copies of $H$ can be made $H$-free by removing from it at most $\delta p n^2$ edges. It clearly suffices to show that part~(i) of Proposition~\ref{prop:main} and $(\eta, p, D)$-upper-uniformity, which holds in $G_{n,p}$ a.a.s., imply the above property. To this end, assume that $G_{n,p}$ is $(\eta, p, D)$-upper-uniform and that part~(i) of Proposition~\ref{prop:main} holds in $G_{n,p}$. Let $G \subseteq G_{n,p}$ be a subgraph with fewer than $\e p^{e(H)} n^{v(H)}$ copies of $H$ and let $G'$ be a subgraph of $G$ satisfying the assertion of Proposition~\ref{prop:sparseregclean} with $\e_{\ref{prop:sparseregclean}} = \e'/k$. By our choice of parameters,
\[
e(G) - e(G') \leq \left(\frac{D}{t_0} + \frac{2D\e'}{k} + d\right) \frac{pn^2}{2} \leq \left(2d + \frac{\delta}{2}\right)\frac{pn^2}{2} < \delta p n^2.
\]
We claim that $G'$ is $H$-free. Since all edges of $G'$ lie in $(\frac{\e'}{k},p)$-regular pairs with edge density at least $dp$, if $G'$ contained a copy of $H$, there would be a graph $H'$ with $k' \leq k$ vertices which is a homomorphic image of $H$, pairwise disjoint sets $V_1', \ldots, V_{k'}'$ and a sequence $\bm' = (m_{ij}')_{ij \in E(H')}$ with $m_{ij}' \geq dp(\frac{n}{t})^2$ such that $G'[V_1' \cup \ldots \cup V_{v(H')}'] \in \G(H', \frac{n}{t}, \bm, p, \e'/k)$. Consequently, there would be pairwise disjoint sets $V_1, \ldots, V_k$ and a sequence $\bm = (m_{ij})_{ij \in E(H)}$ with $m_{ij} \ge \frac{dp}{2} \left( \frac{n}{kt} \right)^2$ such that $G'[V_1 \cup \ldots \cup V_k] \in \G(H, \frac{n}{kt}, \bm, p, \e')$. One can obtain such $V_1, \ldots, V_k$ by arbitrarily dividing each $V_i'$ into $k$ parts and choosing $k$ of these parts according to the homomorphism from $H$ to $H'$. It would follow that the number of copies of $H$ in $G'$, and therefore also in $G$, would exceed
\[
\xi \left(\prod_{ij \in E(H)} \frac{m_{ij}}{(n/(kt))^2}\right)\left(\frac{n}{kt}\right)^{v(H)} \geq \xi\left(\frac{dp}{2}\right)^{e(H)}\left(\frac{n}{kt}\right)^{v(H)} \geq \e p^{e(H)} n^{v(H)},
\]
a contradiction.

\subsection{The clique density theorem}

To begin, we note that if $W$ is a weighted graph on $n$ vertices with $0 \leq W \leq 1$ for which $\sum_{x, y} W(x,y) \geq \rho\binom{n}{2}$, then, for any $\theta$ and $n$ sufficiently large depending on $\theta$, 
\[\sum_{x_1, \dots, x_k} \prod_{1 \leq a < b \leq k} W(x_a,x_b) \geq (g_k(\rho) - \theta) n^k.\]
This follows from choosing a random graph $G$, picking each edge $xy$ independently with probability $W(x,y)$. The resulting graph will, with high probability, have a similar count of edges and $K_k$s to the weighted graph $W$ (see, for example, Corollary 9.7 in \cite{CG12}). The result then follows by applying the clique density theorem to the graph $G$ (and using the fact that $g_k(\rho)$ is uniformly continuous).

Let $k \geq 3$ and $\e > 0$. We start by defining constants. We choose $t_1$ such that, for $t \geq t_1$ and all $\rho$, any weighted graph $W$ on $t$ vertices with $\sum_{x, y} W(x,y) \geq \rho\binom{t}{2}$ also satisfies 
\[\sum_{i_1, \dots, i_k} \prod_{1 \leq a < b \leq k} W(i_a,i_b) \geq \left(g_k(\rho) - \frac{\e}{4}\right) t^k.\] 
Since $g_k(\rho)$ is uniformly continuous in $\rho$, we may choose $\delta'$ such that $|g_k(\rho \pm \delta') - g_k(\rho)| \leq \frac{\e}{4}$. Let $\delta = \min\{\delta', \frac{\e}{2}\}$, $d = \delta/16$, $t_0 = \max\{t_1, 2/d\}$, $D = 2$, $\e' = \min\{\e_{\ref{prop:main}}(H, \delta, d), \delta/32\}$, $T = T_{\ref{prop:sparseregclean}}(\e',D, t_0)$, and $\eta = \min\{\eta_{\ref{prop:sparseregclean}}(\e',D, t_0), 1/T\}$. Finally, let $C = C_{\ref{prop:main}}(H, \delta, d,\eta)$. 

Suppose that $p \geq Cn^{-2/(k+1)}$. Since $(\eta, p, D)$-upper-uniformity holds a.a.s.\ in $G_{n,p}$, we will assume that it is satisfied. Let $G' \subseteq G_{n,p}$ be a subgraph of $G_{n,p}$ of relative density $\rho$, that is, with $\rho p \binom{n}{2}$ edges. By Proposition~\ref{prop:sparseregclean} with $\e_{\ref{prop:sparseregclean}} = \e'$, we get an equipartition $V_1, \dots, V_t$ of the vertex set of $G'$ into $t_0 \leq t \leq T$ pieces and a subgraph $G''$ of $G'$ all of whose edges lie in $(\e',p)$-regular pairs with edge density at least $dp$ and which satisfies
\[
e(G') - e(G'') \leq \left(\frac{D}{t_0} + 2D\e' + d\right) \frac{pn^2}{2} \leq \left(2d + \frac{\delta}{8}\right)\frac{pn^2}{2} < \frac{\delta}{4} p n^2.
\]

Consider the reduced weighted graph $R$ on vertex set $[t]$ with the weight of edge $ij$ given by 
$$R(i,j) = \min\left(\frac{e_{G''}(V_i, V_j)}{p|V_i||V_j|}, 1\right).$$ 
Note that a.a.s.\ the random graph $G_{n,p}$ satisfies $e_{G}(U,V) \leq p |U||V| + \frac{\delta}{4 T^2} p n^2$ for all disjoint subsets $U$ and $V$. This in turn implies that $e_{G''}(V_i,V_j) \leq p |V_i||V_j| + \frac{\delta}{4 T^2} p n^2$. Hence,
$$R(i,j)  \geq \frac{e_{G''}(V_i, V_j)- \frac{\delta}{4} p \left(\frac{n}{T}\right)^2}{p|V_i||V_j|}.$$ 
Therefore, since $e(G') \geq \rho p \binom{n}{2}$,
\[\sum_{1 \leq i < j \leq t} R(i, j) \geq \frac{e(G'') - \frac{\delta}{4} p n^2}{p(\frac{n}{t})^2} \geq \frac{e(G') - \frac{\delta}{2} p n^2}{p (\frac{n}{t})^2} \geq (\rho - \delta) \binom{t}{2}.\]
Therefore, by the choice of $t_1$ and $\delta'$, we have that 
\[\sum_{i_1, \dots, i_k} \prod_{1 \leq a < b \leq k} R(i_a, i_b) \geq \left(g_k(\rho - \delta) - \frac{\epsilon}{4}\right) t^k \geq \left(g_k(\rho) - \frac{\epsilon}{2}\right) t^k.\]
Now, for any particular $1 \leq i_1 < \dots <  i_k \leq t$, consider the sets $V_{i_1}, \dots, V_{i_k}$. Then $G''[V_{i_1} \cup \dots \cup V_{i_k}]$ is an element of $\G(K_k,\frac{n}{t}, \bm, p, \e')$ with $m_{i_a i_b} \ge R(i_a, i_b) p \left(\frac{n}{t}\right)^2$. By part (ii) of Proposition~\ref{prop:main} and the choice of $\e'$, $\eta$, and $C$, it follows that the number of copies of $K_k$ between the sets $V_{i_1}, \dots, V_{i_k}$ is at least
\[(1 - \delta) \prod_{1 \leq a < b \leq k} R(i_a, i_b) \cdot p^{\binom{k}{2}} \left(\frac{n}{t}\right)^{k}.\]
Adding over all choices of $i_1, \dots, i_k$ gives
\begin{align*}
\mu_{G''}(K_k) & \geq (1 - \delta) \sum_{i_1, \dots, i_k} \prod_{1 \leq a < b \leq k} R(i_a, i_b) \cdot p^{\binom{k}{2}} \left(\frac{n}{t}\right)^{k}\\ 
& \geq \left(1 -\frac{\e}{2}\right) \left(g_k(\rho) - \frac{\epsilon}{2}\right) t^k p^{\binom{k}{2}} \left(\frac{n}{t}\right)^{k} \geq (g_k(\rho) - \epsilon) p^{\binom{k}{2}} n^k,
\end{align*}
as required.

\subsection{The Hajnal-Szemer\'edi theorem}

We start this section with a brief outline of the proof of Theorem~\ref{thm:sparsehajnalszemeredi}. Fix some $k \geq 3$ and $\HSe > 0$. Given a subgraph $G'$ of $G_{n,p}$ with $\delta(G') \geq (1 - \frac{1}{k} + \HSe)pn$, we apply the regularity lemma to $G'$ to obtain an $(\e,p)$-regular partition of its vertex set into $t$ parts. We then construct an auxiliary graph $R$ on the vertex set $[t]$ whose edges correspond to $(\e,p)$-regular pairs of non-negligible density in the regular partition of $G'$. Since most of the edges of $G'$ lie in such dense and regular pairs, the assumption on the minimum degree of $G'$ implies that $R$ contains an almost spanning subgraph $R'$ with minimum degree at least $(1-\frac{1}{k})t$. By the Hajnal-Szemer{\'e}di theorem, $R'$ contains a $K_k$-factor. Finally, a fairly straightforward application of Theorem~\ref{thm:main} implies that each of the cliques in this $K_k$-factor corresponds to a $K_k$-packing in $G'$ that covers most of the vertices in the $k$ parts of the regular partition that form this clique. As is typically the case with arguments employing the regularity method, the details of the argument are somewhat intricate.

We start the actual proof by fixing several constants. Let
\[
\HSX = \min \left\{ \frac{\HSe}{2}, \frac{1}{2k} \right\}, \quad \HSep = \frac{\HSX^2}{14}, \quad \text{and} \quad t_0 = \left\lceil \frac{1}{\HSep} \right\rceil.
\]
Let $\e'$ be the constant obtained by invoking Proposition~\ref{prop:main} with $d_{\ref{prop:main}} = \HSep$. Let $\e = \min\{\HSX \e', \HSep/2\}$ and $D = 1 + \HSep$. Furthermore, let $T = T_{\ref{prop:sparseregclean}}(\e, D, t_0)$, let $\eta = \min\{\eta_{\ref{prop:sparseregclean}}(\e,D,t_0), 1/T\}$, and let $\eta' = \beta/T$. Assume that $p \geq Cn^{-1/m_2(H)}$ for some large constant $C$ so that a.a.s.\ the random graph $G_{n,p}$ is $(\eta, p, D)$-upper-uniform and every subgraph $G$ of $G_{n,p}$ in $\G\bigl(K_k,n', \bm, p, \e' \bigr)$, where $n' \geq \eta' n$ and $m_{ij} \geq \HSep p (n')^2$ for all $ij \in E(K_k)$, contains a canonical copy of $K_k$. We shall show that, conditioned on the above two events, every $G' \subseteq G_{n,p}$ with $\delta(G') \geq (1 - \frac{1}{k} + \HSe)$ contains a $K_k$-packing covering all but at most $\HSe n$ vertices.

Fix a $G'$ as above and apply Proposition~\ref{prop:sparseregclean} with $d_{\ref{prop:sparseregclean}} = 2\HSep$ to obtain an equipartition $V_1, \ldots, V_t$ of the vertex set of $G'$ into $t_0 \leq t \leq T$ pieces and a subgraph $G''$ of $G'$ all of whose edges lie in $(\e,p)$-regular pairs with edge density at least $2\HSep p$ and which satisfies
\[
e(G') - e(G'') \leq \left(\frac{D}{t_0} + 2D\e + 2\HSep \right)\frac{pn^2}{2} \leq 3\HSep pn^2.
\]
Let $R$ be the graph on the vertex set $[t]$ whose edges are those pairs $ij$ such that the bipartite graph $G''(V_i,V_j)$ is non-empty. Recall that each such bipartite graph is $(\e,p)$-regular and has at least $2 \HSep p |V_i||V_j|$ edges.

\begin{claim} \label{claim:HS}
  The cluster graph $R$ contains a subgraph $R'$ with $\delta(R') \geq (1-\frac{1}{k})t$ and $t' \geq (1-\HSX)t$ vertices for some $t'$ divisible by $k$.
\end{claim}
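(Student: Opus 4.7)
The plan is to show that almost every vertex $i\in[t]$ has degree at least roughly $(1-\tfrac1k+\HSX)t$ in $R$, then discard the few ``bad'' vertices and finally trim a few more so that the number of surviving vertices is divisible by $k$.

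First I would estimate, for each cluster $V_i$, the number of $G'$-edges and $G''$-edges leaving $V_i$. Using $\delta(G')\ge(1-\tfrac1k+\HSe)pn$ and the $(\eta,p,D)$-upper-uniformity of $G_{n,p}\supseteq G'$,
\[
e_{G'}(V_i,V(G')\setminus V_i)\ \ge\ \tfrac{n}{t}\bigl(1-\tfrac1k+\HSe\bigr)pn - Dp\bigl(\tfrac{n}{t}\bigr)^{2},
\]
while, because $G''$ has no edges inside any $V_i$ and each nonempty pair $G''(V_i,V_j)$ contributes at most $e_{G_{n,p}}(V_i,V_j)\le Dp(n/t)^{2}$ edges,
\[
e_{G''}(V_i,V(G')\setminus V_i)\ \le\ d_R(i)\cdot Dp\bigl(\tfrac{n}{t}\bigr)^{2}.
\]
Summing the difference over all $i$ double-counts edges of $G'\setminus G''$, so this sum is at most $2(e(G')-e(G''))\le 6\HSep pn^{2}$.

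Next, call $i$ \emph{bad} if $d_R(i)<(1-\tfrac1k+\HSX)t$, and let $B$ be the set of bad vertices. Plugging this inequality into the two bounds above, and using $\HSep=\HSX^{2}/14$, $D=1+\HSep$, and $t\ge t_0\ge 1/\HSep$, a short computation shows that for each $i\in B$,
\[
e_{G'}(V_i,V(G')\setminus V_i)-e_{G''}(V_i,V(G')\setminus V_i)\ \ge\ \HSX\,\tfrac{pn^{2}}{t},
\]
(the key slack is $\HSe-\HSX D-\tfrac{D}{t}-(1-\tfrac1k)\HSep\ge\HSX$, which holds because $\HSX\le\HSe/2$ and $\HSep$ is much smaller than $\HSX$). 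Combining with the global bound $6\HSep pn^{2}$ on the summed difference yields $|B|\le 6\HSep t/\HSX\le\HSX t/2$.

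Finally, let $R_0=R\setminus B$, so $|V(R_0)|\ge(1-\HSX/2)t$ and every vertex of $R_0$ has
\[
d_{R_0}(i)\ \ge\ d_R(i)-|B|\ \ge\ \bigl(1-\tfrac1k+\HSX\bigr)t-\tfrac{\HSX}{2}t\ =\ \bigl(1-\tfrac1k+\tfrac{\HSX}{2}\bigr)t.
\]
Delete at most $k-1$ further vertices from $R_0$ so that the remaining vertex count $t'$ is divisible by $k$; since $t\ge t_0\ge 1/\HSep\ge 2(k-1)/\HSX$, the resulting $R'$ still satisfies $t'\ge(1-\HSX)t$ and $\delta(R')\ge(1-\tfrac1k+\HSX/2)t-(k-1)\ge(1-\tfrac1k)t$. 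The only real obstacle is getting the constants to line up, which is precisely why $\HSX$, $\HSep$, $t_0$, $\eta$, and $D$ were chosen as they were at the beginning of the subsection.
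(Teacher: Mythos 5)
Your proof follows essentially the same route as the paper's: both arguments bound the number of clusters $i$ whose $R$-degree is below $(1-\frac1k+\HSX)t$ by relating the resulting per-cluster edge deficit in $G'\setminus G''$ to the global quantity $e(G')-e(G'') \le 3\HSep pn^2$, and then delete those clusters plus at most $k-1$ extra vertices. The paper executes this through a greedy deletion process and a contradiction (if more than $\HSX t - k$ vertices were removed, the first $\HSX t - k$ of them all had low $R$-degree, which is impossible); you do it with a direct bad-set count, which is slightly cleaner but needs the extra (elementary) observation that removing $|B|+(k-1)$ vertices lowers the remaining degrees by at most that amount.

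One inaccuracy worth flagging: your displayed ``key slack'' inequality $\HSe - \HSX D - \frac{D}{t} - (1-\frac1k)\HSep \ge \HSX$ is false at the boundary $\HSX = \HSe/2$; there the left side equals $\HSX - \HSX\HSep - (1+\HSep)/t - (1-\frac1k)\HSep$, which is strictly less than $\HSX$. So the per-bad-vertex discrepancy is only $\ge \bigl(1 - O(\HSep)\bigr)\HSX\, pn^2/t$, not $\ge \HSX\, pn^2/t$. This does not break the argument: since $6\HSep/\HSX = 3\HSX/7 < 1/2$ with comfortable margin (roughly $0.43$ versus $0.5$), the weakened per-vertex bound still gives $|B| < \HSX t/2$, and the rest of your computation carries through unchanged. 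So the structure and conclusion are correct; only the stated intermediate inequality needs a small adjustment to account for the $O(\HSep)$ loss.
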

\begin{proof}
  We construct such a graph $R'$ greedily by sequentially removing from $R$ vertices of degree smaller than $(1-\frac{1}{k})t + k$ and at most $k-1$ further vertices in order to guarantee that $k$ divides $t'$. If this process terminates before we delete from $R$ more than $\HSX t - k$ vertices, then we will arrive at a graph $R'$ with the desired properties. Otherwise, $R$ contains at least $\HSX t - k$ vertices with degree at most $(1-\frac{1}{k} + \HSX)t$. Denote this set by $X$ and observe that
  \begin{align*}
    \sum_{i \in X} \sum_{v \in V_i} \deg_{G'}(v) & \leq \sum_{i \in X} \sum_{v \in V_i} \deg_{G''}(v) + 2(e(G') - e(G'')) = \sum_{i \in X} \sum_{j \neq i} e_{G''}(V_i, V_j) + 2(e(G') - e(G'')) \\
    & \leq |X| \left(1 - \frac{1}{k} + \HSX \right) t \cdot Dp\left(\frac{n}{t}\right)^2 + 6 \HSep p n^2 \leq \frac{|X|}{t} \left(1 - \frac{1}{k} + \HSX + \HSep + \frac{12\HSep}{\HSX}\right)pn^2 \\
    & < \frac{|X|}{t} \left( 1 - \frac{1}{k} + 2 \HSX \right) pn^2.
  \end{align*}
  On the other hand,
  \[
  \sum_{i \in X} \sum_{v \in V_i} \deg_{G'}(v) \geq |X| \cdot \frac{n}{t} \cdot \delta(G') \geq \frac{|X|}{t} \left(1 - \frac{1}{k} + \HSe \right)pn^2,
  \]
  a contradiction, as $\gamma \geq 2\beta$.
\end{proof}

By the Hajnal-Szemer{\'e}di theorem, Theorem~\ref{thm:hajnalszemeredi}, the graph $R'$ contains a $K_k$-factor. Hence, it suffices to show that each subgraph of $G''$ induced by sets $V_{i_1}, \ldots, V_{i_k}$, where $i_1, \ldots, i_k \in [t]$ form a copy $K_k$ in $R$, contains a $K_k$-packing covering at least $(1-\HSX)$-fraction of its vertices. Indeed, since the $K_k$-factor in $R'$ covers at least a $(1-\HSX)$-proportion of all the vertices of the cluster graph $R$ and, as we show below, each of its cliques $i_1, \ldots, i_k$ corresponds to a $K_k$-packing covering at least a $(1-\HSX)$-proportion of the vertices in $V_{i_1} \cup \ldots \cup V_{i_k}$, Theorem~\ref{thm:sparsehajnalszemeredi} will easily follow, as $(1-\HSX)^2 > 1 - \HSe$.

\begin{claim}
  Suppose that $i_1, \ldots, i_k \in [t]$ induce a copy of $K_k$ in $R$. Then the graph $G''[V_{i_1} \cup \ldots \cup V_{i_k}]$ contains a $K_k$-packing covering all but at most $\HSX \frac{n}{t}$ vertices in each of $V_{i_1}, \ldots, V_{i_k}$.
\end{claim}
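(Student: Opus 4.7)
The plan is a greedy packing argument. Initialize $W_j := V_{i_j}$ for each $j \in [k]$, and repeatedly find a canonical copy of $K_k$ in $G''[W_1 \cup \dots \cup W_k]$ (one vertex per $W_j$), then delete its $k$ vertices from the $W_j$. Since every iteration decreases every $|W_j|$ by exactly one, the sizes remain equal throughout. I will continue the process as long as $|W_j| \geq \HSX n/t$ for all $j$; upon termination the resulting packing covers at least $(1-\HSX) n/t$ vertices in each $V_{i_j}$, which is exactly what the claim requires.

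The whole argument reduces to verifying that whenever $|W_j| \geq \HSX n/t$ for every $j$, the induced subgraph $G''[W_1 \cup \dots \cup W_k]$ lies in $\G(K_k, |W_1|, \bm, p, \e')$ for some $\bm$ with every $m_{jj'} \geq \HSep p |W_1|^2$. For the regularity condition I will use the standard observation that the restriction of an $(\e, p)$-regular bipartite graph to subsets of relative size at least $\HSX$ is $(\e', p)$-regular provided $\e \leq \HSX \e'$: any $U_j \subseteq W_j$ with $|U_j| \geq \e' |W_j|$ satisfies $|U_j| \geq \e' \HSX |V_{i_j}| \geq \e |V_{i_j}|$, so the original regularity of $G''(V_{i_j}, V_{i_{j'}})$ yields $|d(U_j, U_{j'}) - d(V_{i_j}, V_{i_{j'}})| \leq \e p$ and the same bound for $W_j, W_{j'}$ in place of $U_j, U_{j'}$, whence the triangle inequality gives $|d(U_j, U_{j'}) - d(W_j, W_{j'})| \leq 2\e p \leq \e' p$. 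For the density, the same computation yields $d(W_j, W_{j'}) \geq 2\HSep p - \e p \geq \HSep p$ since $\e \leq \HSep/2$, so each $m_{jj'} \geq \HSep p |W_1|^2$ as needed.

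Finally, since $|W_1| \geq \HSX n/t \geq \HSX n/T = \eta' n$ (using $t \leq T$), the a.a.s.\ event from Proposition~\ref{prop:main} that was assumed at the start of the proof of Theorem~\ref{thm:sparsehajnalszemeredi} supplies a canonical copy of $K_k$ in $G''[W_1 \cup \dots \cup W_k]$, completing each greedy step. The procedure therefore runs for at least $(1 - \HSX) n/t$ iterations, producing the desired packing. There is no genuine obstacle here; the parameters $\e$, $\HSep$, $\HSX$, and $\eta'$ were chosen precisely to absorb the two losses (regularity rescaling and edge-density deviation) incurred when passing from the full $V_{i_j}$ to the shrinking sets $W_j$, so that Proposition~\ref{prop:main} continues to apply at every step.
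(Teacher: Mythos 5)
Your proof is correct and follows essentially the same route as the paper's: the paper also reduces to showing that any subsets $W_j \subseteq V_{i_j}$ of size at least $\HSX n/t$ yield a graph in $\G(K_k, n', \bm, p, \e')$ with $m_{jj'} \geq \HSep p (n')^2$, then invokes the counting statement to extract a canonical copy of $K_k$; the only cosmetic difference is that you spell out the greedy extraction loop explicitly, whereas the paper phrases it as "it will be enough to show that for every choice of $W_1,\dots,W_k$ ... the graph contains a canonical copy of $K_k$" and leaves the iteration implicit.
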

\begin{proof}
  For simplicity, assume that $i_1 = 1, \ldots, i_k = k$. It will be enough to show that for every choice of $W_1 \subseteq V_1, \ldots, W_k \subseteq V_k$ with $n' = |W_1| = \ldots = |W_k| \geq \HSX \frac{n}{t}$, the graph $G''[W_1 \cup \ldots \cup W_k]$ contains a canonical copy of $K_k$. To this end, observe that this graph belongs to $\G(K_k,n', \bm, p, \e')$ for some $\bm = (m_{ij})_{ij \in E(K_k)}$ with $m_{ij} \geq \HSep p(n')^2$ for all $ij \in E(K_k)$. Indeed, since for each $ij \in E(K_k)$, the graph $G''(V_i, V_j)$ is $(\e,p)$-regular and has at least $2 \HSep p |V_i| |V_j|$ edges, it follows that the graph $G''(W_i, W_j)$ is $(\e',p)$-regular and has at least $\HSep p|V_i||V_j|$ edges. The claim now follows.
\end{proof}

\subsection{The Andr\'asfai-Erd\H{o}s-S\'os theorem}

Fix a graph $H$ and $\gamma > 0$. We start by fixing several constants. Choose $t_1$ so that, for any $t \geq t_1$, the Andr\'asfai-Erd\H{o}s-S\'os theorem \cite{AES74} (or rather its generalization due to Alon and Sudakov \cite{AS06}) holds in the sense that any $H$-free graph on $t$ vertices with minimum degree at least $\left(1 - \frac{3}{3\chi(H) - 4} + \frac{\gamma}{2}\right) t$ may be made $(\chi(H) - 1)$-partite by removing at most $\frac{\gamma}{2} t^2$ edges. Let
\[
\HSX = \frac{\HSe}{4}, \quad \HSep = \frac{\HSX^2}{14}, \quad \text{and} \quad t_0 = \max\{2t_1, \left\lceil 1/\HSep \right\rceil\}.
\]
Let $\e'$ be the constant obtained by invoking Proposition~\ref{prop:main} with $d_{\ref{prop:main}} = \HSep$. Let $\e = \min\{\e', \HSep/2\}$ and $D = 1 + \HSep$. Furthermore, let $T = T_{\ref{prop:sparseregclean}}(\e, D, t_0)$ and $\eta = \min\{\eta_{\ref{prop:sparseregclean}}(\e,D,t_0), 1/T\}$. Assume that $p \geq Cn^{-1/m_2(H)}$ for some large constant $C$ so that a.a.s.\ the random graph $G_{n,p}$ is $(\eta, p, D)$-upper uniform and every subgraph $G$ of $G_{n,p}$ in $\G\bigl(H,n', \bm, p, \e\bigr)$, where $n' \geq \eta n$ and $m_{ij} \geq \HSep p (n')^2$ for all $ij \in E(H)$, contains a canonical copy of $H$. We shall show that, conditioned on the above two events, every $H$-free subgraph $G' \subseteq G_{n,p}$ with $\delta(G') \geq (1 - \frac{3}{3 \chi(H) - 4} + \HSe)pn$ may be made $(\chi(H)-1)$-partite by removing at most $\HSe p n^2$ edges.

Fix a $G'$ as above and apply Proposition~\ref{prop:sparseregclean} with $d_{\ref{prop:sparseregclean}} = \HSep$ to obtain an equipartition $V_1, \ldots, V_t$ of the vertex set of $G'$ into $t_0 \leq t \leq T$ pieces and a subgraph $G''$ of $G'$ all of whose edges lie in $(\e,p)$-regular pairs with edge density at least $\HSep p$ and which satisfies
\[
e(G') - e(G'') \leq \left(\frac{D}{t_0} + 2D\e + \HSep \right)\frac{pn^2}{2} \leq 3\HSep pn^2.
\]
Let $R$ be the graph on vertex set $[t]$ whose edges are those pairs $ij$ such that the bipartite graph $G''(V_i,V_j)$ is non-empty. Recall that each such bipartite graph is $(\e,p)$-regular and has at least $\HSep p |V_i||V_j|$ edges. The following claim is proved in the same way as Claim~\ref{claim:HS}.

\begin{claim}
  The cluster graph $R$ contains a subgraph $R'$ with $\delta(R') \geq (1-\frac{3}{3\chi(H) - 4} +\frac{\HSe}{2})t$ and $t' \geq (1-\HSX)t$ vertices.
\end{claim}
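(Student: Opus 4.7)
The plan is to mimic the proof of Claim~\ref{claim:HS} almost verbatim, adjusting only the degree threshold and dropping the divisibility step. First, I construct $R'$ greedily: starting from $R$, iteratively remove any vertex whose degree in the \emph{current} graph is strictly smaller than $\bigl(1-\tfrac{3}{3\chi(H)-4}+\tfrac{\gamma}{2}\bigr)t$. Since no divisibility of $t'$ is required here, no final $k-1$ vertices need be discarded. If the process stops after deleting fewer than $\beta t$ vertices, the resulting $R'$ meets both requirements and we are done.

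Suppose for contradiction that the process removes at least $\beta t$ vertices. Then $R$ contains a set $X$ with $|X|\geq \beta t$ such that every $i\in X$ has $\deg_R(i)\leq \bigl(1-\tfrac{3}{3\chi(H)-4}+\tfrac{\gamma}{2}+\beta\bigr)t$: at the moment the vertex was removed its degree in the surviving graph was below the threshold, and at most $\beta t$ previously removed vertices could have been former neighbours in $R$. I double count
\[
\sum_{i\in X}\sum_{v\in V_i}\deg_{G'}(v).
\]
For the upper bound, write $\deg_{G'}(v)\leq \deg_{G''}(v)+$ (edges lost incident to $v$), and note that the only nonempty bipartite pieces $G''(V_i,V_j)$ correspond to edges $ij$ of $R$; applying $(\eta,p,D)$-upper-uniformity to bound each $e_{G''}(V_i,V_j)\leq Dp(n/t)^2$ and using the degree bound on $i\in X$ gives
\[
\sum_{i\in X}\sum_{v\in V_i}\deg_{G'}(v)\;\leq\;|X|\Bigl(1-\tfrac{3}{3\chi(H)-4}+\tfrac{\gamma}{2}+\beta\Bigr)t\cdot Dp\Bigl(\tfrac{n}{t}\Bigr)^{\!2}+2\cdot 3\gamma' pn^2.
\]
Dividing by $|X|n/t$ and using $D=1+\gamma'$, $|X|\geq \beta t$, $\gamma'=\beta^2/14$, and $\beta=\gamma/4$, the right-hand side is strictly less than $\bigl(1-\tfrac{3}{3\chi(H)-4}+\gamma\bigr)pn$. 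On the other hand, the minimum degree hypothesis on $G'$ gives the reverse inequality
\[
\sum_{i\in X}\sum_{v\in V_i}\deg_{G'}(v)\;\geq\;|X|\cdot\tfrac{n}{t}\cdot\delta(G')\;\geq\;|X|\cdot\tfrac{n}{t}\cdot\Bigl(1-\tfrac{3}{3\chi(H)-4}+\gamma\Bigr)pn,
\]
a contradiction.

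The argument is therefore structurally identical to Claim~\ref{claim:HS}: the only real work is the constant-chasing that verifies the slack $\tfrac{\gamma}{2}$ built into the threshold absorbs the three error contributions -- the greedy overshoot $\beta t$, the upper-uniformity factor $D-1=\gamma'$, and the cleaning loss $6\gamma' pn^2/(\beta pn^2)=O(\gamma'/\beta)$ -- which it does precisely because of the choices $\beta=\gamma/4$ and $\gamma'=\beta^2/14$. The main (very mild) obstacle is simply making sure this bookkeeping survives the slightly weaker edge-density $\gamma'$ used in this subsection (as opposed to $2\gamma'$ in the Hajnal-Szemerédi section), but since the density of $G''(V_i,V_j)$ is only used to certify that $ij\in E(R)$, the numerical constants work out in the same way.
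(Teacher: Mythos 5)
Your proof is correct and is exactly the approach the paper intends: the paper's own ``proof'' is the single sentence ``proved in the same way as Claim~\ref{claim:HS},'' and you have spelled out precisely that adaptation -- the same greedy deletion/double-count argument with the degree threshold $\left(1-\frac{3}{3\chi(H)-4}+\frac{\gamma}{2}\right)t$ in place of $\left(1-\frac{1}{k}\right)t+k$, and with the divisibility step (hence the $+k$ slack and the $-k$ in $\beta t - k$) correctly dropped. The constant-chasing you indicate does go through with $\HSX=\gamma/4$ and $\HSep=\HSX^2/14$, so there is no gap.
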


Suppose now that $R'$ contained a copy of $H$. Then there would be pairwise disjoint sets $V_1, \ldots, V_k$ and a sequence $\bm = (m_{ij})_{ij \in E(H)}$ with $m_{ij} \geq \HSep p(\frac{n}{t})^2$ such that $G'[V_1 \cup \ldots \cup V_k] \in \G(H,\frac{n}{t},\bm,p,\e)$. Consequently, by Proposition~\ref{prop:main} and the choice of $\e$, $\eta$, and $C$, the number of copies of $H$ in $G''$ would be positive,
contradicting the assumption that $G'$ was $H$-free.

Therefore, $R'$ is $H$-free and by the choice of $t_0$, the graph $R'$ may be turned into a $(\chi(H) - 1)$-partite graph $R_0$ by removing at most $\frac{\HSe}{2} t'^2$ edges. The corresponding sparse graph $G_0$ whose edges consist of all those edges contained within an edge of $R_0$ is also $(\chi(H) - 1)$-partite. It is obtained from $G'$ by first deleting at most $3 \gamma' p n^2$ edges to form $G''$, then at most $\beta t^2 D p \left(\frac{n}{t}\right)^2$ edges in forming $R'$ from $R$ and, finally, at most $\frac{\gamma}{2} t'^2 D p \left(\frac{n}{t}\right)^2$ edges in forming $R_0$ from $R'$. Overall, this is at most
\[3 \gamma' p n^2+ \beta t^2 D p \left(\frac{n}{t}\right)^2 + \frac{\gamma}{2} t'^2 D p \left(\frac{n}{t}\right)^2 \leq \frac{\gamma}{16} p n^2 + \frac{5\gamma}{16} p n^2 + \frac{9 \gamma}{16} p n^2 < \gamma p n^2.\]
The result follows.


\section{Concluding remarks} \label{sec:conclude}

\subsection{Hypergraphs}

The methods employed in this paper should also extend to work for hypergraphs. That is, one should be able to show that a.a.s~any regular partition of a subgraph of the random hypergraph has a corresponding counting lemma. The main obstacle here is to prove a sparse counterpart to the hypergraph regularity lemma \cite{G06, G07, NRS06, RS04, T06}. This should be a comparatively straightforward hybrid of the hypergraph regularity lemma and the sparse regularity lemma. Once this theorem is in place, either method used in this paper should extend to show that it is effective in the random setting. 

For linear hypergraphs, that is, hypergraphs for which every pair of edges intersect in at most one vertex, the results of this paper generalize more easily. In this case, the relevant regularity and counting lemmas \cite{KNRS10} (see also \cite{CHPS12}) follow in a similar fashion to the usual regularity and counting lemmas. This is because it is enough to have control over edge density on large vertex sets, whereas, for general hypergraphs, we need more elaborate conditions. 

An alternative approach was already used in \cite{CG12} to prove an extension of the hypergraph removal lemma to sparse random hypergraphs. Roughly speaking, rather than applying a sparse regularity lemma, one maps the sparse hypergraph $G$ to its dense model $K$ and then applies the usual hypergraph regularity lemma to this hypergraph $K$. This produces a regular partition which is also regular for the original hypergraph. 

The difference is a matter of quantifiers. If we have a sparse hypergraph regularity lemma then the correct analogue of the K\L R conjecture would be that a.a.s.\ {\it any} regular partition of a subgraph of the random hypergraph has a corresponding counting lemma. This alternative method allows one to say that a.a.s.\ for any subgraph of the random hypergraph there exists {\it some} regular partition for which there is a corresponding counting lemma. Despite this difference, we believe that this method is likely to be sufficient for most applications in the random setting.

\subsection{Removing the need for strict balance}

In Theorem \ref{thm:main} (ii) we assumed that the graph $H$ was strictly balanced, which was sufficient for the applications in this paper. However, the methods of \cite{CG12} can be used to obtain a result without this condition if we allow an extra logarithmic factor. That is, if $p\geq C(\log N)^cN^{-1/{m_2(H)}}$, for $c > 0$ an absolute constant, then a.a.s.\ we have the conclusion 
\[G(H) = (1 \pm \delta) \left(\frac{m}{n^2}\right)^{e(H)} n^{v(H)}\]
that we had in Theorem \ref{thm:main} (ii). We are not formally claiming this as a result, since a certain amount of modification is needed to the proofs in \cite{CG12}, and though this modification appears to be straightforward, we have not written it out in detail. 

However, let us briefly indicate why we are confident that this result is true. It turns out that many of the difficulties involved in proving transference disappear if one allows some extra logarithmic factors in the thresholds. This is because we no longer have to worry about certain large deviations from the expectation. To give an example, recall that the threshold for Tur\'an's theorem for triangles occurs at around $p = n^{-1/2}$. This is the point at which the number of triangles is about the same as the number of edges. So we expect that most edges will be contained in at most a constant number of triangles. However, it will happen that there are some edges which are in many more triangles than expected and this deviation is enough to spoil some of the required estimates. 

To deal with these unwanted deviations, one has to show that they do not occur too often and this adds many extra technicalities to the proof. In \cite{CG12}, the main tool for circumventing these problems was the introduction of so-called capped convolutions. If we allow some extra slack, we no longer have to work with these capped convolutions and this simplifies the proof considerably. In particular, it appears to be straightforward to prove the following result, valid for all graphs $H$.

\begin{claim} \label{thm:transfer2}
There is a positive constant $c$ such that, for any graph $H$ on $k$ vertices and any $\e > 0$, there exist positive constants $C$ and $\lambda$ such that if $C (\log n)^c N^{-1/m_2(H)} \leq p \leq \lambda$ then the following holds a.a.s.\ in the random graph $G_{N,p}$. For all disjoint vertex subsets $V_1, \dots, V_k$ and every subgraph $G$ of $G_{N,p}$ in $H^*(V_1, \dots, V_k)$, there exists a subgraph $K$ of $K_N$ in $H^*(V_1, \dots, V_k)$ such that \begin{equation*}
|p^{-e(H)}\mu^*_H(G) - \mu^*_H(K)| \leq \epsilon
\end{equation*}
and, for all pairs of disjoint vertex subsets $U_1, U_2$ of $V(K_N)$,
\begin{equation*}
|p^{-1} \sum_{x, y} G(x,y) - \sum_{x, y} K(x,y)|\leq \e N^2,
\end{equation*}
where the sums are taken over all $x \in U_1$ and $y \in U_2$.
\end{claim}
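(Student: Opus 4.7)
The approach is to adapt the transference proof of Theorem~\ref{thm:canonicaltransfer} from~\cite{CG12}, trading the strict-balancedness hypothesis for the extra $(\log N)^c$ factor in $p$. As in that proof, I would set $\gamma = p^{-1}\mathbf{1}_{G_{N,p}}$ and $g = p^{-1}\mathbf{1}_{G}$; the goal is to produce a function $k \colon V_1\times\dots\times V_k \to [0,1]$, supported on the edges of $H^*(V_1,\dots,V_k)$, that approximates $g$ simultaneously in a cut-type norm $\|\cdot\|$ (which yields the edge-discrepancy condition~\eqref{eqn:edgeapprox}) and in canonical $H$-count (which yields~\eqref{eqn:Happrox} after rescaling). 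A random rounding of $k$ then produces the integer graph $K$, exactly as in the discussion following Theorem~\ref{thm:transfer}.

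The two ingredients from~\cite{CG12} are a dense model theorem and a counting lemma. The dense model theorem is soft: for the appropriate cut-type norm it applies provided $\gamma$ satisfies the abstract pseudorandomness hypotheses associated with $H^*$, which hold a.a.s.\ once $p \geq CN^{-1/m_2(H)}$ by standard Janson-type bounds. The counting lemma is proved by the telescoping identity sketched in the excerpt after Theorem~\ref{thm:transfer}; each of the $e(H)$ error terms $\mu_H^*(k,\dots,k,g-k,g,\dots,g)$ is controlled via an iterated Cauchy--Schwarz (generalized von Neumann) inequality as the product of $\|g-k\| = o(1)$ with ``side-norm'' factors that each involve $g$ or $\gamma$ in the remaining slots.

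The crucial issue, and the reason the proof in~\cite{CG12} required strict balancedness (or capped convolutions), is bounding these side-norm factors. They reduce essentially to weighted counts of subgraphs $H' \subseteq H$ with some vertices pinned in the $V_i$, and for strictly balanced $H$ the least-concentrated such $H'$ is $H$ itself, so Janson's inequality gives multiplicative concentration up to a factor of $1+o(1)$ already at $p \geq CN^{-1/m_2(H)}$. For general $H$, certain proper subgraphs $H' \subsetneq H$ with $m_2(H') < m_2(H)$ produce pinned counts with polynomially large upper tails, and Conlon--Gowers handled these via capped convolutions. Raising $p$ by a factor of $(\log N)^c$ circumvents the issue: Janson's inequality together with the Kim--Vu polynomial method shows that, for every $H' \subseteq H$ and every pinning of some of its vertices, the relevant weighted count concentrates around its expectation within a factor of $1+o(1)$ with probability $1 - \exp(-\Omega(\log^{c'} N))$ for an appropriate $c' > 0$. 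A union bound over the finitely many isomorphism types of $H'$ and the $N^{O(v(H))}$ pinning patterns then bounds every side-norm factor by $O(1)$, which is all that is needed for the counting lemma to go through.

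The principal obstacle is thus the uniform polylogarithmic concentration of all of these pinned weighted subgraph counts. The required value of $c$ is driven by the most unbalanced proper subgraph of $H$, where the upper-tail problem is most acute and the Janson exponent is weakest; the union bound must be arranged so that the total measure of bad events remains $o(1)$. Once this concentration is in place, the rest of the argument --- the verification that $g-k$ is small in $\|\cdot\|$ (hence~\eqref{eqn:edgeapprox}), the telescoping from the one-sided to the two-sided estimate as in the excerpt following Theorem~\ref{thm:transfer}, and the random rounding from $k$ to $K$ via Chernoff --- runs as in~\cite{CG12} with only cosmetic modifications, removing the need for the capped-convolution device entirely.
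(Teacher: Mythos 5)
The paper does not actually prove Claim~\ref{thm:transfer2}. It is deliberately stated as a \emph{claim} rather than a theorem, and the surrounding text says explicitly: ``We are not formally claiming this as a result, since a certain amount of modification is needed to the proofs in \cite{CG12}, and though this modification appears to be straightforward, we have not written it out in detail.'' What the paper offers instead is a short heuristic: the extra $(\log N)^c$ slack in $p$ eliminates the large deviations that the capped-convolution device of \cite{CG12} was introduced to control, so the transference argument simplifies. Your sketch follows that same outline and adds useful specificity --- in particular, locating the crux in the side-norm factors of the generalized von Neumann inequality and tracing them to concentration of pinned weighted subgraph counts is a fair reading of where the capped convolutions intervene in \cite{CG12}.

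Two points deserve flagging, though neither is settled by the paper itself. First, Janson's inequality gives lower-tail concentration only; the pinned counts require upper-tail estimates, and the upper tail for subgraph counts (Kim--Vu, or the more recent work on the infamous upper tail) has exponents that depend on the structure of the subgraphs of $H$. For your union bound over the $N^{O(v(H))}$ pinning patterns to close, the exponent $c'$ in your $\exp(-\Omega(\log^{c'} N))$ bound must strictly exceed $1$, which in turn constrains how small $c$ can be taken. Second, your proposal has the required exponent $c$ ``driven by the most unbalanced proper subgraph of $H$,'' hence potentially depending on $H$, whereas the claim as stated asserts a single universal constant $c$. One would need to argue uniformity (e.g.\ that the relevant exponents are bounded over all graphs, perhaps with $C$ and $\lambda$ absorbing the $H$-dependence) or accept a weaker formulation. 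The paper's discussion does not resolve this either, so you are in the same position as the authors: a plausible and arguably straightforward modification, but one that has not been carried out.
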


The version of Theorem \ref{thm:main} without the strict balance condition is a straightforward consequence of this claim, proved in exactly the same fashion as Part (ii) of Theorem \ref{thm:main}. We omit the details.

\vspace{2mm}
\noindent
{\bf Acknowledgements.} {The three junior authors are indebted to Tibor Szab\'o and his group at the Freie Universit\"at, Berlin for hosting us during part of the period when we were working on this paper. We would also like to thank Jacob Fox for pointing out how to bridge the gap in the sparse removal lemma for balanced graphs.}

\bibliographystyle{abbrv}
\bibliography{references}

\begin{thebibliography}{10}

\bibitem{Al10}
P.~Allen.
\newblock Dense {$H$}-free graphs are almost {$(\chi(H)-1)$}-partite.
\newblock {\em Electron. J. Combin.}, 17(1):Research Paper 21, 11pp.
  (electronic), 2010.

\bibitem{ABGKM12}
P.~Allen, J.~B\"ottcher, S.~Griffiths, Y.~Kohayakawa, and R.~Morris.
\newblock The chromatic thresholds of graphs.
\newblock {\em Adv. Math}, 235:261---295, 2013.

\bibitem{ACHKRS10}
N.~Alon, A.~Coja-Oghlan, H.~H{\`a}n, M.~Kang, V.~R{\"o}dl, and M.~Schacht.
\newblock Quasi-randomness and algorithmic regularity for graphs with general
  degree distributions.
\newblock {\em SIAM J. Comput.}, 39(6):2336--2362, 2010.

\bibitem{ADLRY94}
N.~Alon, R.~A. Duke, H.~Lefmann, V.~R{\"o}dl, and R.~Yuster.
\newblock The algorithmic aspects of the regularity lemma.
\newblock {\em J. Algorithms}, 16(1):80--109, 1994.

\bibitem{AS08}
N.~Alon and J.~H. Spencer.
\newblock {\em The probabilistic method}.
\newblock Wiley-Interscience Series in Discrete Mathematics and Optimization.
  John Wiley \& Sons Inc., Hoboken, NJ, third edition, 2008.
\newblock With an appendix on the life and work of Paul Erd{\H{o}}s.

\bibitem{AS06}
N.~Alon and B.~Sudakov.
\newblock {$H$}-free graphs of large minimum degree.
\newblock {\em Electron. J. Combin.}, 13(1):Research Paper 19, 9 pp.
  (electronic), 2006.

\bibitem{AY96}
N.~Alon and R.~Yuster.
\newblock {$H$}-factors in dense graphs.
\newblock {\em J. Combin. Theory Ser. B}, 66(2):269--282, 1996.

\bibitem{AES74}
B.~Andr{\'a}sfai, P.~Erd{\H{o}}s, and V.~T. S{\'o}s.
\newblock On the connection between chromatic number, maximal clique and
  minimal degree of a graph.
\newblock {\em Discrete Math.}, 8:205--218, 1974.

\bibitem{BLS12}
J.~Balogh, C.~Lee, and W.~Samotij.
\newblock Corr{\'a}di and {H}ajnal's theorem for sparse random graphs.
\newblock {\em Combin. Probab. Comput.}, 21:23--55, 2012.

\bibitem{BMS12}
J.~Balogh, R.~Morris, and W.~Samotij.
\newblock Independent sets in hypergraphs.
\newblock Submitted. arXiv:1204.6530v1 [math.CO].

\bibitem{B02}
M.~Behrisch.
\newblock Random graphs without a short cycle.
\newblock Master's thesis, {H}umboldt-{U}niversit\"at zu {B}erlin, 2002.

\bibitem{B76}
B.~Bollob{\'a}s.
\newblock Relations between sets of complete subgraphs.
\newblock In {\em Proceedings of the {F}ifth {B}ritish {C}ombinatorial
  {C}onference ({U}niv. {A}berdeen, {A}berdeen, 1975)}, pages 79--84.
  Congressus Numerantium, No. XV, Utilitas Math., Winnipeg, Man., 1976.

\bibitem{BT12}
S.~Brandt and S.~Thomass\'e.
\newblock Dense triangle-free graphs are four-colorable: a solution to the
  {E}rd{\H{o}}s-{S}imonovits problem.
\newblock To appear in J. Combin. Theory Ser. B.

\bibitem{CF13}
D.~Conlon and J.~Fox.
\newblock Graph removal lemmas.
\newblock In {\em Surveys in combinatorics 2013}, London Math. Soc. Lecture
  Note Ser. Cambridge Univ. Press, Cambridge, 2013.

\bibitem{CFS10}
D.~Conlon, J.~Fox, and B.~Sudakov.
\newblock An approximate version of {S}idorenko's conjecture.
\newblock {\em Geom. Funct. Anal.}, 20(6):1354--1366, 2010.

\bibitem{CFZ12}
D.~Conlon, J.~Fox, and Y.~Zhao.
\newblock Extremal results in sparse pseudorandom graphs.
\newblock Submitted. arXiv:1204.6645v1 [math.CO].

\bibitem{CG12}
D.~Conlon and W.~T. Gowers.
\newblock Combinatorial theorems in sparse random sets.
\newblock Submitted. arXiv:1011.4310v1 [math.CO].

\bibitem{CHPS12}
D.~Conlon, H.~H{\`a}n, Y.~Person, and M.~Schacht.
\newblock Weak quasi-randomness for uniform hypergraphs.
\newblock {\em Random Structures Algorithms}, 40(1):1--38, 2012.

\bibitem{CH63}
K.~Corr{\'a}di and A.~Hajnal.
\newblock On the maximal number of independent circuits in a graph.
\newblock {\em Acta Math. Acad. Sci. Hung.}, 14:423--439, 1963.

\bibitem{D52}
G.~A. Dirac.
\newblock Some theorems on abstract graphs.
\newblock {\em Proc. London Math. Soc.}, 2:69--81, 1952.

\bibitem{ES46}
P.~Erd\H{o}s and A.~H. Stone.
\newblock On the structure of linear graphs.
\newblock {\em Bull. Amer. Math. Soc.}, 52:1087--1091, 1946.

\bibitem{E62}
P.~Erd{\H{o}}s.
\newblock On a theorem of {R}ademacher-{T}ur\'an.
\newblock {\em Illinois J. Math.}, 6:122--127, 1962.

\bibitem{E69}
P.~Erd{\H{o}}s.
\newblock On the number of complete subgraphs and circuits contained in graphs.
\newblock {\em \v Casopis P\v est. Mat.}, 94:290--296, 1969.

\bibitem{EFR86}
P.~Erd{\H{o}}s, P.~Frankl, and V.~R{\"o}dl.
\newblock The asymptotic number of graphs not containing a fixed subgraph and a
  problem for hypergraphs having no exponent.
\newblock {\em Graphs Combin.}, 2(2):113--121, 1986.

\bibitem{ESi83}
P.~Erd{\H{o}}s and M.~Simonovits.
\newblock Supersaturated graphs and hypergraphs.
\newblock {\em Combinatorica}, 3(2):181--192, 1983.

\bibitem{F89}
D.~C. Fisher.
\newblock Lower bounds on the number of triangles in a graph.
\newblock {\em J. Graph Theory}, 13(4):505--512, 1989.

\bibitem{F11}
J.~Fox.
\newblock A new proof of the graph removal lemma.
\newblock {\em Ann. of Math. (2)}, 174(1):561--579, 2011.

\bibitem{FRS10}
E.~Friedgut, V.~R{\"o}dl, and M.~Schacht.
\newblock Ramsey properties of random discrete structures.
\newblock {\em Random Structures Algorithms}, 37(4):407--436, 2010.

\bibitem{Fu95}
Z.~F{\"u}redi.
\newblock Extremal hypergraphs and combinatorial geometry.
\newblock In {\em Proceedings of the {I}nternational {C}ongress of
  {M}athematicians, {V}ol.\ 1, 2 ({Z}\"urich, 1994)}, pages 1343--1352, Basel,
  1995. Birkh\"auser.

\bibitem{GKRS07}
S.~Gerke, Y.~Kohayakawa, V.~R{\"o}dl, and A.~Steger.
\newblock Small subsets inherit sparse {$\epsilon$}-regularity.
\newblock {\em J. Combin. Theory Ser. B}, 97(1):34--56, 2007.

\bibitem{GMS07}
S.~Gerke, M.~Marciniszyn, and A.~Steger.
\newblock A probabilistic counting lemma for complete graphs.
\newblock {\em Random Structures Algorithms}, 31(4):517--534, 2007.

\bibitem{GPSST07}
S.~Gerke, H.~J. Pr{\"o}mel, T.~Schickinger, A.~Steger, and A.~Taraz.
\newblock {$K_4$}-free subgraphs of random graphs revisited.
\newblock {\em Combinatorica}, 27(3):329--365, 2007.

\bibitem{GSS04}
S.~Gerke, T.~Schickinger, and A.~Steger.
\newblock {$K_5$}-free subgraphs of random graphs.
\newblock {\em Random Structures Algorithms}, 24(2):194--232, 2004.

\bibitem{GS05}
S.~Gerke and A.~Steger.
\newblock The sparse regularity lemma and its applications.
\newblock In {\em Surveys in combinatorics 2005}, volume 327 of {\em London
  Math. Soc. Lecture Note Ser.}, pages 227--258. Cambridge Univ. Press,
  Cambridge, 2005.

\bibitem{GS00}
M.~Goldwurm and M.~Santini.
\newblock Clique polynomials have a unique root of smallest modulus.
\newblock {\em Inform. Process. Lett.}, 75(3):127--132, 2000.

\bibitem{G59}
A.~W. Goodman.
\newblock On sets of acquaintances and strangers at any party.
\newblock {\em Amer. Math. Monthly}, 66:778--783, 1959.

\bibitem{G06}
W.~T. Gowers.
\newblock Quasirandomness, counting and regularity for 3-uniform hypergraphs.
\newblock {\em Combin. Probab. Comput.}, 15(1-2):143--184, 2006.

\bibitem{G07}
W.~T. Gowers.
\newblock Hypergraph regularity and the multidimensional {S}zemer\'edi theorem.
\newblock {\em Ann. of Math. (2)}, 166(3):897--946, 2007.

\bibitem{G10}
W.~T. Gowers.
\newblock Decompositions, approximate structure, transference, and the
  {H}ahn-{B}anach theorem.
\newblock {\em Bull. Lond. Math. Soc.}, 42(4):573--606, 2010.

\bibitem{GT08}
B.~Green and T.~Tao.
\newblock The primes contain arbitrarily long arithmetic progressions.
\newblock {\em Ann. of Math. (2)}, 167(2):481--547, 2008.

\bibitem{H82}
R.~H{\"a}ggkvist.
\newblock Odd cycles of specified length in nonbipartite graphs.
\newblock In {\em Graph theory ({C}ambridge, 1981)}, volume~62 of {\em
  North-Holland Math. Stud.}, pages 89--99. North-Holland, Amsterdam, 1982.

\bibitem{HS70}
A.~Hajnal and E.~Szemer{\'e}di.
\newblock Proof of a conjecture of {P}. {E}rd{\H o}s.
\newblock In {\em Combinatorial theory and its applications, {II} ({P}roc.
  {C}olloq., {B}alatonf\"ured, 1969)}, pages 601--623. North-Holland,
  Amsterdam, 1970.

\bibitem{HLS12}
H.~Huang, C.~Lee, and B.~Sudakov.
\newblock Bandwith theorem for random graphs.
\newblock {\em J. Combin. Theory Ser. B}, 102:14--37, 2012.

\bibitem{JOR04}
S.~Janson, K.~Oleszkiewicz, and A.~Ruci{\'n}ski.
\newblock Upper tails for subgraph counts in random graphs.
\newblock {\em Israel J. Math.}, 142:61--92, 2004.

\bibitem{J95}
G.~P. Jin.
\newblock Triangle-free four-chromatic graphs.
\newblock {\em Discrete Math.}, 145(1-3):151--170, 1995.

\bibitem{KiKo08}
H.~A. Kierstead and A.~V. Kostochka.
\newblock A short proof of the {H}ajnal-{S}zemer\'edi theorem on equitable
  colouring.
\newblock {\em Combin. Probab. Comput.}, 17(2):265--270, 2008.

\bibitem{K97}
Y.~Kohayakawa.
\newblock Szemer\'edi's regularity lemma for sparse graphs.
\newblock In {\em Foundations of computational mathematics ({R}io de {J}aneiro,
  1997)}, pages 216--230. Springer, Berlin, 1997.

\bibitem{KK97}
Y.~Kohayakawa and B.~Kreuter.
\newblock Threshold functions for asymmetric {R}amsey properties involving
  cycles.
\newblock {\em Random Structures Algorithms}, 11(3):245--276, 1997.

\bibitem{KLR96}
Y.~Kohayakawa, T.~{\L}uczak, and V.~R{\"o}dl.
\newblock Arithmetic progressions of length three in subsets of a random set.
\newblock {\em Acta Arith.}, 75(2):133--163, 1996.

\bibitem{KLR97}
Y.~Kohayakawa, T.~{\L}uczak, and V.~R{\"o}dl.
\newblock On {$K^4$}-free subgraphs of random graphs.
\newblock {\em Combinatorica}, 17(2):173--213, 1997.

\bibitem{KNRS10}
Y.~Kohayakawa, B.~Nagle, V.~R{\"o}dl, and M.~Schacht.
\newblock Weak hypergraph regularity and linear hypergraphs.
\newblock {\em J. Combin. Theory Ser. B}, 100(2):151--160, 2010.

\bibitem{KR03}
Y.~Kohayakawa and V.~R{\"o}dl.
\newblock Regular pairs in sparse random graphs. {I}.
\newblock {\em Random Structures Algorithms}, 22(4):359--434, 2003.

\bibitem{KoRo03}
Y.~Kohayakawa and V.~R{\"o}dl.
\newblock Szemer\'edi's regularity lemma and quasi-randomness.
\newblock In {\em Recent advances in algorithms and combinatorics}, volume~11
  of {\em CMS Books Math./Ouvrages Math. SMC}, pages 289--351. Springer, New
  York, 2003.

\bibitem{KRS04}
Y.~Kohayakawa, V.~R{\"o}dl, and M.~Schacht.
\newblock The {T}ur\'an theorem for random graphs.
\newblock {\em Combin. Probab. Comput.}, 13(1):61--91, 2004.

\bibitem{Ko00}
J.~Koml{\'o}s.
\newblock Tiling {T}ur\'an theorems.
\newblock {\em Combinatorica}, 20(2):203--218, 2000.

\bibitem{KSS01}
J.~Koml{\'o}s, G.~N. S{\'a}rk{\"o}zy, and E.~Szemer{\'e}di.
\newblock Proof of the {A}lon-{Y}uster conjecture.
\newblock {\em Discrete Math.}, 235(1-3):255--269, 2001.

\bibitem{KSSS02}
J.~Koml{\'o}s, A.~Shokoufandeh, M.~Simonovits, and E.~Szemer{\'e}di.
\newblock The regularity lemma and its applications in graph theory.
\newblock In {\em Theoretical aspects of computer science ({T}ehran, 2000)},
  volume 2292 of {\em Lecture Notes in Comput. Sci.}, pages 84--112. Springer,
  Berlin, 2002.

\bibitem{KO09}
D.~K{\"u}hn and D.~Osthus.
\newblock Embedding large subgraphs into dense graphs.
\newblock In {\em Surveys in combinatorics 2009}, volume 365 of {\em London
  Math. Soc. Lecture Note Ser.}, pages 137--167. Cambridge Univ. Press,
  Cambridge, 2009.

\bibitem{KO092}
D.~K{\"u}hn and D.~Osthus.
\newblock The minimum degree threshold for perfect graph packings.
\newblock {\em Combinatorica}, 29(1):65--107, 2009.

\bibitem{LS12}
C.~Lee and B.~Sudakov.
\newblock Dirac's theorem for random graphs.
\newblock {\em Random Structures Algorithms}, 41:293--305, 2012.

\bibitem{LSz12}
J.~Li and B.~Szegedy.
\newblock On the logarithmic calculus and {S}idorenko's conjecture.
\newblock Submitted. arXiv:1107.1153 [math.CO].

\bibitem{LS83}
L.~Lov{\'a}sz and M.~Simonovits.
\newblock On the number of complete subgraphs of a graph. {II}.
\newblock In {\em Studies in pure mathematics}, pages 459--495. Birkh\"auser,
  Basel, 1983.

\bibitem{L00}
T.~{\L}uczak.
\newblock On triangle-free random graphs.
\newblock {\em Random Structures Algorithms}, 16(3):260--276, 2000.

\bibitem{Lu06}
T.~{\L}uczak.
\newblock Randomness and regularity.
\newblock In {\em International {C}ongress of {M}athematicians. {V}ol. {III}},
  pages 899--909. Eur. Math. Soc., Z\"urich, 2006.

\bibitem{NRS06}
B.~Nagle, V.~R{\"o}dl, and M.~Schacht.
\newblock The counting lemma for regular {$k$}-uniform hypergraphs.
\newblock {\em Random Structures Algorithms}, 28(2):113--179, 2006.

\bibitem{N11}
V.~Nikiforov.
\newblock The number of cliques in graphs of given order and size.
\newblock {\em Trans. Amer. Math. Soc.}, 363(3):1599--1618, 2011.

\bibitem{R30}
F.~P. Ramsey.
\newblock On a problem of formal logic,.
\newblock {\em Proc. London Math. Soc. Ser. 2}, 30:264--286, 1930.

\bibitem{R07}
A.~A. Razborov.
\newblock Flag algebras.
\newblock {\em J. Symbolic Logic}, 72(4):1239--1282, 2007.

\bibitem{R08}
A.~A. Razborov.
\newblock On the minimal density of triangles in graphs.
\newblock {\em Combin. Probab. Comput.}, 17(4):603--618, 2008.

\bibitem{Re12}
C.~Reiher.
\newblock The clique density theorem.
\newblock Submitted. arXiv:1212.2454v1 [math.CO].

\bibitem{RTTV08}
O.~Reingold, L.~Trevisan, M.~Tulsiani, and S.~P. Vadhan.
\newblock Dense subsets of pseudorandom sets.
\newblock In {\em FOCS}, pages 76--85. IEEE Computer Society, 2008.

\bibitem{RR95}
V.~R{\"o}dl and A.~Ruci{\'n}ski.
\newblock Threshold functions for {R}amsey properties.
\newblock {\em J. Amer. Math. Soc.}, 8(4):917--942, 1995.

\bibitem{RS10}
V.~R{\"o}dl and M.~Schacht.
\newblock Regularity lemmas for graphs.
\newblock In {\em Fete of combinatorics and computer science}, volume~20 of
  {\em Bolyai Soc. Math. Stud.}, pages 287--325. J\'anos Bolyai Math. Soc.,
  Budapest, 2010.

\bibitem{RS04}
V.~R{\"o}dl and J.~Skokan.
\newblock Regularity lemma for {$k$}-uniform hypergraphs.
\newblock {\em Random Structures Algorithms}, 25(1):1--42, 2004.

\bibitem{R53}
K.~F. Roth.
\newblock On certain sets of integers.
\newblock {\em J. London Math. Soc.}, 28:104--109, 1953.

\bibitem{RS78}
I.~Z. Ruzsa and E.~Szemer{\'e}di.
\newblock Triple systems with no six points carrying three triangles.
\newblock In {\em Combinatorics ({P}roc. {F}ifth {H}ungarian {C}olloq.,
  {K}eszthely, 1976), {V}ol. {II}}, volume~18 of {\em Colloq. Math. Soc.
  J\'anos Bolyai}, pages 939--945. North-Holland, Amsterdam, 1978.

\bibitem{Sj12}
W.~Samotij.
\newblock Stability results for discrete random structures.
\newblock {\em Random Structures Algorithms}.
\newblock To appear. arXiv:1111.6885v3 [math.CO].

\bibitem{ST12}
D.~Saxton and A.~Thomason.
\newblock Hypergraph containers.
\newblock Submitted. arXiv:1204.6595v2 [math.CO].

\bibitem{S12}
M.~Schacht.
\newblock Extremal results for discrete random structures.
\newblock Submitted.

\bibitem{Sc11}
A.~Scott.
\newblock Szemer\'edi's regularity lemma for matrices and sparse graphs.
\newblock {\em Combin. Probab. Comput.}, 20(3):455--466, 2011.

\bibitem{Si68}
M.~Simonovits.
\newblock A method for solving extremal problems in graph theory, stability
  problems.
\newblock In {\em Theory of {G}raphs ({P}roc. {C}olloq., {T}ihany, 1966)},
  pages 279--319. Academic Press, New York, 1968.

\bibitem{Sz75}
E.~Szemer{\'e}di.
\newblock On sets of integers containing no {$k$} elements in arithmetic
  progression.
\newblock {\em Acta Arith.}, 27:199--245, 1975.

\bibitem{Sz78}
E.~Szemer{\'e}di.
\newblock Regular partitions of graphs.
\newblock In {\em Probl\`emes combinatoires et th\'eorie des graphes ({C}olloq.
  {I}nternat. {CNRS}, {U}niv. {O}rsay, {O}rsay, 1976)}, volume 260 of {\em
  Colloq. Internat. CNRS}, pages 399--401. CNRS, Paris, 1978.

\bibitem{T06}
T.~Tao.
\newblock A variant of the hypergraph removal lemma.
\newblock {\em J. Combin. Theory Ser. A}, 113(7):1257--1280, 2006.

\bibitem{Z47}
K.~Zarankiewicz.
\newblock Sur les relations sym\'etriques dans l'ensemble fini.
\newblock {\em Colloquium Math.}, 1:10--14, 1947.

\end{thebibliography}

\end{document}